\documentclass[11pt]{article}
\usepackage[a4paper,margin=28mm]{geometry}
\usepackage{amsmath,amssymb,amsthm,mathtools}
\usepackage{mathrsfs}
\usepackage{microtype}
\usepackage{xcolor}
\usepackage[colorlinks=true,linkcolor=blue!55!black,citecolor=blue!55!black,urlcolor=blue!55!black]{hyperref}
\usepackage[nameinlink,capitalize]{cleveref}

\newtheorem{theorem}{Theorem}[section]
\newtheorem{proposition}[theorem]{Proposition}
\newtheorem{lemma}[theorem]{Lemma}
\newtheorem{corollary}[theorem]{Corollary}
\theoremstyle{remark}
\newtheorem{remark}[theorem]{Remark}

\newcommand{\B}{\mathbb B}
\newcommand{\Sph}{\mathbb S}
\newcommand{\N}{\mathcal N}
\newcommand{\E}{\mathcal E}

\newcommand{\dd}{\,\mathrm d}

\title{Sharp observation time for wave equations with an inverse-square boundary potential}
\author{Longben Wei}
\date{}

\begin{document}
\maketitle

\begin{abstract}
We determine the sharp observation-time threshold for the autonomous, pure-potential wave equation with an inverse-square boundary singularity on the unit ball.
For every fixed $\kappa<0$, full-boundary observability holds on $(-T,T)$ whenever $T>1$ and fails when $T<1$. This resolves, for this model, the optimal-time question raised by Enciso, Shao, and Vergara [J. Eur. Math. Soc. 23 (2021), 3459--3495]
and extends their parameter range $-\frac12<\kappa<0$
to all $\kappa<0$, including the limit-point regime
$\kappa\leq-\frac12$.

Our approach is based on the Ingham--Beurling theory of
nonharmonic Fourier series.
Spherical harmonic decomposition reduces the problem to
one-dimensional singular operators, for which we establish
frequency separation, local counting estimates, and lower
bounds for the boundary norming constants, uniformly in the
angular parameter.
A quantitative Ingham--Beurling inequality then yields the
sharp-time observability estimate.

As a by-product, this spectral approach provides an independent
proof of boundary observability and an explicit functional
characterization of its maximal $L^2$ observation domain.
The latter is described by a weighted $\ell^2$ condition on
the spectral coefficients and consists precisely of energy
solutions whose distributional Neumann trace belongs to $L^2$.
It contains the boundary-admissible energy solutions considered
in Enciso--Shao--Vergara's original work.
\end{abstract}
\noindent{\bf 2020 Mathematics Subject Classification:} 35L05; 35L81; 42A20; 93B07.
\newline 
\noindent{\bf Key words:} Wave equation, inverse-square potential, non-harmonic Fourier series, observability.
\tableofcontents
\clearpage
\section{Introduction}

Our aim is to sharpen the boundary observability inequality
of \cite{ESV2021} by establishing the optimal observation
time for the unperturbed singular wave equation on the
unit ball by a spectral approach.

Let $n\geq2$, $\B:=B(0,1)\subset\mathbb R^n$,
$r=|x|$, and $y=1-r$. We consider the initial-boundary
value problem
\begin{equation}\label{eq:wave}
    \begin{cases}
        \Box_\kappa u
        :=-\partial_t^2u+\Delta u
        +\dfrac{\kappa(1-\kappa)}{y^2}u=0,
        & \text{in }\mathbb R\times\B,\\[1mm]
        \mathcal D_\kappa u=0,
        & \text{on }\mathbb R\times\partial\B,\\[1mm]
        u(0,\cdot)=u_0,\qquad
        \partial_tu(0,\cdot)=u_1,
        & \text{in }\B,
    \end{cases}
\end{equation}
where $\kappa$ measures the strength of the potential,
following the notation of \cite{ESV2021}.
We use the same weighted boundary traces as in
\cite[equation~(2.11)]{ESV2021}:
\begin{equation}\label{eq:weighted-boundary-traces}
    \mathcal D_\kappa u
    :=
    \lim_{r\uparrow1}y^{-\kappa}u,
    \qquad
    \mathcal N_\kappa u
    :=
    \lim_{r\uparrow1}
    y^{2\kappa}\partial_r(y^{-\kappa}u),
\end{equation}
initially defined for functions for which the interior
expressions and the indicated limits are well defined.
In the boundary-admissible setting of
\cite[Definition~2.2]{ESV2021}, the boundary limits
are taken strongly in
$L^2((-T,T)\times\Sph^{n-1})$.
Using Carleman estimates adapted to the inverse-square
boundary singularity, Enciso, Shao, and Vergara
established boundary observability for sufficiently
large observation times. For the unperturbed equation
\eqref{eq:wave}, their result takes the following form.

\begin{theorem}[Enciso--Shao--Vergara
{\cite[Theorem~5.1]{ESV2021}}]
\label{thm:esv-observability}
Fix $n\geq3$ and $-\frac12<\kappa<0$.
There exists $T_0=T_0(n,\kappa)>0$ such that,
for every $T>T_0$, there is a constant
$C_{T,n,\kappa}>0$ with the following property.

Let $u$ be a real-valued solution of
\eqref{eq:wave} on $(-T,T)\times\B$ that is
boundary admissible in the sense of
\cite[Definition~2.2]{ESV2021}.
Assume that its twisted energy
\[
    E_{\mathrm{ESV}}[u](t)
    :=
    \int_\B
    \left(
        |\partial_tu|^2
        +\bigl|y^\kappa\nabla(y^{-\kappa}u)\bigr|^2
        +|u|^2
    \right)\dd x
\]
is finite for every $t\in(-T,T)$.
Then
\begin{equation}\label{eq:esv-observability}
    E_{\mathrm{ESV}}[u](0)
    \leq
    C_{T,n,\kappa}
    \int_{-T}^{T}\int_{\Sph^{n-1}}
        |\mathcal N_\kappa u|^2
        \,\dd\vartheta\,\dd t.
\end{equation}
\end{theorem}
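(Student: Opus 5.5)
The route I would take follows the spectral approach announced in the abstract; it bypasses the Carleman estimates of \cite{ESV2021}. First, decompose $u$ in spherical harmonics, $u=\sum_{\ell,m} r^{-(n-1)/2}v_{\ell m}(t,r)Y_{\ell m}(\vartheta)$. This reduces \eqref{eq:wave} to the family of one-dimensional problems $-\partial_t^2 v+\partial_r^2 v-\frac{c_{\ell,n}}{r^2}v+\frac{\kappa(1-\kappa)}{(1-r)^2}v=0$ on $(0,1)$, where $c_{\ell,n}=\ell(\ell+n-2)+\frac{(n-1)(n-3)}{4}$. Each problem carries the weighted Dirichlet condition at $r=1$. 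Let $A_\ell$ denote the associated self-adjoint operator, with the Friedrichs extension in the limit-circle range $-\frac12<\kappa<0$. It has eigenvalues $\lambda_{\ell,k}^2$ and normalized eigenfunctions $\phi_{\ell,k}$. Then
\[
v_{\ell m}(t,r)=\sum_k \bigl(a_{\ell m k}e^{i\lambda_{\ell,k}t}+b_{\ell m k}e^{-i\lambda_{\ell,k}t}\bigr)\phi_{\ell,k}(r),
\]
and the twisted energy $E_{\mathrm{ESV}}[u](0)$ is comparable, uniformly in $\ell$, to $\sum (1+\lambda_{\ell,k}^2)(|a|^2+|b|^2)$. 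This comparability uses the Hardy-type inequality for the twisted gradient $y^\kappa\nabla(y^{-\kappa}\cdot)$. The observed quantity is
\[
\int_{-T}^{T}\int_{\Sph^{n-1}}|\N_\kappa u|^2 = \sum_{\ell,m}\int_{-T}^T\Bigl|\sum_k \N_\kappa\phi_{\ell,k}\,(a e^{i\lambda t}+b e^{-i\lambda t})\Bigr|^2\dd t.
\]
By orthogonality of the $Y_{\ell m}$, the estimate \eqref{eq:esv-observability} therefore reduces to a one-dimensional Ingham inequality that must hold uniformly in $\ell$.

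The key spectral inputs come next, in this order. The first is a uniform gap $\lambda_{\ell,k+1}-\lambda_{\ell,k}\ge\gamma>0$. I would obtain it from a Prüfer-angle/WKB comparison: the Bessel-type singularity at $r=0$ and the $(1-r)^{-2}$ singularity at $r=1$ contribute phase shifts, and the eigenvalues satisfy $\lambda_{\ell,k}=\pi k+\text{const}(\ell,\kappa)+o(1)$ as $k\to\infty$. The second is a lower bound $|\N_\kappa\phi_{\ell,k}|\gtrsim \lambda_{\ell,k}$ on the boundary norming constants, uniform in $\ell$ and $k$. I would prove it via a Rellich/Pohozaev multiplier identity: multiplying by $(r-\tfrac12)\partial_r$ and weighting appropriately gives
\[
|\N_\kappa\phi|^2\simeq \lambda^2\!\int\!|\phi|^2+\text{positive angular terms}.
\]
The positivity of the angular terms is exactly the good sign of the $c_\ell/r^2$ term under the radial multiplier.

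Given a uniform gap $\gamma$, the classical Ingham inequality yields $\int_{-T}^T|\sum c_k e^{i\mu_k t}|^2\gtrsim\sum|c_k|^2$ for every $T>\pi/\gamma$. Taking $c_k=\N_\kappa\phi_{\ell,k}\,a_{\ell mk}$ and using the norming lower bound recovers $\sum\lambda^2|a|^2$. Summing over $(\ell,m)$ then gives \eqref{eq:esv-observability} with $T_0=\pi/\gamma$. The possible small frequencies (zero modes, or $\pm\lambda$ coalescing) are handled by the usual finite-dimensional compactness argument. The $|u|^2$ part of the energy is controlled by the spectral gap $\lambda_{\ell,k}\ge\lambda_0>0$.

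I expect the main obstacle to be uniformity in $\ell$. For large $\ell$ the low eigenvalues satisfy $\lambda\sim\ell$, and the eigenfunctions concentrate near $r=1$ in a turning-point region. There the asymptotic gap $\pi$ is not yet reached, so a crude WKB argument does not give a uniform $\gamma$. My first attempt would be to treat this region via a Sturm comparison on the Prüfer equation, showing that the phase velocity $\partial_\lambda\theta$ is bounded above uniformly. If that fails, I would settle for a local counting estimate: a uniformly bounded number of eigenvalues in each unit interval. That estimate suffices for the Beurling-type (non-sharp $T_0$) version of Ingham, which is all the statement requires.
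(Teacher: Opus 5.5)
Your skeleton (spherical reduction, uniform gap, uniform lower bound on the norming constants, Ingham in each channel, summation) is the route by which the paper recovers this statement, with $T_0=1$, from \cref{thm:target}, \cref{prop:app-energy-comparison} and \cref{prop:app-inclusion}. The gap is that the two uniform inputs, which carry all the difficulty, are not proved, and one of your mechanisms is false.

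\emph{The counting fallback fails.} A uniform bound on the number of frequencies per unit interval controls only the upper density. Beurling's theorem, like Ingham's, assumes uniform separation (\cref{thm:classical-beurling}). Without separation, two frequencies of one channel may coalesce; then $\int_{-T}^T|e^{i\lambda t}-e^{i\lambda' t}|^2\,\dd t\leq\frac23T^3|\lambda-\lambda'|^2$ while the coefficient norm is $2$, so no lower bound holds for any $T$. The uniform gap is indispensable.

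\emph{The gap argument is only a mechanism.} Bounding the energy derivative of a Pr\"ufer phase is the right idea: it is \cref{lem:interior-matching-phase}, where $\partial_E\Theta$ is $h^{-1}$ times the sum of the ratios of $L^2$ mass to scaled Cauchy data of the two principal solutions at a matching point. But you give no reason why these ratios are bounded uniformly in $(\nu,k)$. In the paper this requires the uniform transfer estimate \cref{lem:endpoint-transfer1} for the solution principal at $r=1$. It also requires the Rellich identity \cref{lem:rellich11} for the solution principal at $r=0$; there an inverse-square term cancels and $2E-2W-xW'$ has a sign. In the glancing regime $\nu/\omega\to1$, this works only after the boundary-layer rescaling $x=(1-r)/\epsilon$, with $\epsilon=\omega^2/\nu^2-1\gtrsim\nu^{-2/3}$. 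When $\eta=\nu^{-1}\epsilon^{-3/2}$ stays bounded away from zero, phase estimates are replaced by compactness toward $-\eta_\infty^2\partial_x^2+\eta_\infty^2a x^{-2}+2x$, whose Friedrichs eigenvalues are simple (\cref{lem:uniform-radial-gap1}). A WKB or Sturm comparison supplies none of this.

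\emph{The norming argument does not work as described.} Since $\phi_{\ell,k}(1-y)\sim c_{\nu_\ell,k}\,y^{1-\kappa}$ with $1-\kappa>1$, both $\phi_{\ell,k}'$ and $\phi_{\ell,k}/y$ vanish at $r=1$. So every Rellich identity with a bounded multiplier $m(r)\partial_r$ has zero boundary term there and never sees $\N_\kappa\phi_{\ell,k}$. To capture it, write the equation as $-\partial_r(y^{2\kappa}\psi')+c_{\ell,n}r^{-2}y^{2\kappa}\psi=\lambda^2y^{2\kappa}\psi$ with $\psi=y^{-\kappa}\phi$, and multiply by $m\,y^{2\kappa}\psi'$. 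The resulting identity contains $-4\kappa\int_0^1 m\,y^{2\kappa-1}(\lambda^2-c_{\ell,n}r^{-2})\phi^2\,\dd r$. For $m\geq0$ this is negative on the classically forbidden region, which in the glancing regime reaches the boundary layer. So the positivity you assert is exactly what has to be proved. Even without the boundary potential, $(r-\frac12)\partial_r$ gives $\frac14\phi'(1)^2=\lambda^2-\frac{c_{\ell,n}}2\int_0^1\phi^2r^{-3}\,\dd r$, so the angular term has the bad sign.

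The paper instead argues by contradiction through the same regimes (\cref{lem:norming-lower1}). In the transversal regime the transfer estimate gives $1\leq Ch^{2p}|A|^2$ with $h=\omega^{-1}$, hence $|c_{\nu,k}|^2/\omega^2\gtrsim\omega^{2p-2}$. In the glancing regime it uses the rescaled principal coefficient, either via the small-$\eta$ transfer estimate or via convergence to a model eigenfunction whose principal coefficient is nonzero.

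\emph{The trace step is not automatic.} Your termwise formula for $\int_{-T}^T\int_{\Sph^{n-1}}|\N_\kappa u|^2$ needs justification for a boundary-admissible $u$. First, the strong boundary limit must be identified with the modal series; \cref{prop:app-inclusion} does this by Green's identity against $\chi(r)(1-r)^\kappa$, using $\kappa>-\frac12$. Second, $|\gamma_{\nu,k}|/\omega_{\nu,k}$ is unbounded, so finite energy does not put $(\gamma_{\nu,k}a^\pm_{\ell,m,k})_k$ in $\ell^2$, and Ingham's inequality cannot be applied directly. The paper obtains square summability by applying the lower bound to the time mollifications $\rho_\varepsilon*_tu$ and using Fatou's lemma (\cref{prop:observation-domain}).
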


The optimal observation time was left open in
\cite[Remark~1.11]{ESV2021}.
Our aim is to establish the sharp threshold $T>1$
for the unperturbed model, extend the parameter range,
and formulate the estimate on a larger class of
energy solutions.
More recently, the large-time boundary observability
result has also been extended to dimension two;
see \cite[Theorem~1.13]{JenaShao2026}.

Exploiting the spherical geometry and the radial
symmetry of the potential, we adopt a spectral
approach based on Ingham--Beurling inequalities,
rather than Carleman estimates.
The use of Ingham-type inequalities in control theory
has a long history, including the works of
Haraux \cite{Haraux1989},
Jaffard \cite{Jaffard1988,Jaffard1990},
and Komornik \cite{Komornik1992};
see \cite{KomornikLoreti2005} for a comprehensive
account of the classical theory. Subsequent applications include Gueye's spectral
approach to boundary observability for
one-dimensional degenerate wave equations
\cite{Gueye2014}.
More recently, T\"aufer \cite{Taeufer2023} combined
Floquet--Bloch decomposition with multidimensional
Ingham-type inequalities to establish internal
observability for the free Schr\"odinger equation
on $\mathbb R^d$ from any nonempty periodic open set
in any positive time.
In a recent preprint, Haak, Jaming, Wang, and Wang
\cite{HaakJamingWangWang2026} established curved
Ingham inequalities for dispersive trigonometric
sums restricted to curved trajectories, with
applications to observability for fractional
Schr\"odinger equations on the torus.

Our approach is directly inspired by
\cite[Chapter~7]{KomornikLoreti2005}, where spherical
harmonic expansions and nonharmonic Fourier estimates
are combined to establish boundary observability
on balls. We follow this strategy to obtain the sharp
observation-time threshold for the present singular
model. The main difficulty is to obtain spectral
estimates uniform in the angular parameter: fixed-order
eigenvalue asymptotics do not provide the uniform
constants needed to sum over all spherical harmonic
channels. Moreover, the additional inverse-square
singularity at the observed boundary prevents a direct
use of the explicit Bessel formulas. We therefore
establish uniform frequency separation, a quantitative
counting bound with the sharp leading coefficient,
and a lower bound for the boundary observation
coefficients.

We use the abstract wave-equation framework of
\cite[Chapter~3]{KomornikLoreti2005} and
\cite[Sections~3.7--3.8]{TucsnakWeiss2009}.
Throughout, let $\kappa<0$ and set
\[
    \mu:=\frac12-\kappa>\frac12,
    \qquad
    a:=-\kappa(1-\kappa)=\mu^2-\frac14>0.
\]
The parameter $\mu$ puts the boundary singularity
in the standard Bessel form
$(\mu^2-\frac14)y^{-2}$, with characteristic exponents
$\frac12\pm\mu$, and will be convenient for describing
the principal solution and its endpoint asymptotics.
It corresponds to $l=\mu-\frac12$ in
\cite{KST2010, KostenkoTeschl2011}.
The symbol $a$ is simply an abbreviation for the
positive potential coefficient; these parameters
are all determined by $\kappa$.

On the unit ball, the boundary Hardy inequality reads
\[
    \int_\B|\nabla v|^2\,\dd x
    \geq
    \frac14\int_\B\frac{|v|^2}{y^2}\,\dd x,
    \qquad v\in H_0^1(\B),
\]
where the constant $1/4$ is optimal and independent
of the dimension; see
\cite[Proposition~2.1]{BiccariZuazua2016}.
Consequently, for every $\mu\geq0$, the symmetric operator
\[
    -\Delta+\frac{\mu^2-\frac14}{y^2},
    \qquad D=C_c^\infty(\B),
\]
is nonnegative and therefore admits a unique
Friedrichs extension. For $\mu>0$, the form domain of the Friedrichs extension is $H_0^1(\B)$, and its form norm is equivalent to the $H_0^1$ norm.
At the critical value $\mu=0$, corresponding to the
inverse-square coefficient $-1/4$, the Friedrichs
form domain is strictly larger than $H_0^1(\B)$;
see the discussion following equation~(2.7) in
\cite{BiccariZuazua2016}.
We denote this realization by
\[
    A_\kappa=-\Delta+\frac{a}{y^2},
    \qquad
    D(A_\kappa^{1/2})=H_0^1(\B).
\]

With this realization, the wave equation takes the
abstract form
\[
    \partial_t^2u+A_\kappa u=0.
\]
Its conserved form energy is (see \cite[Proposition~3.8.7]{TucsnakWeiss2009})
\begin{equation}\label{eq:energy}
    \E_\kappa[u](t)
    :=
    \frac12\int_{\B}
    \left(
        |\partial_tu|^2+|\nabla u|^2
        +\frac{a}{y^2}|u|^2
    \right)\dd x.
\end{equation}
We will show in \cref{app:energy-equivalence} in the common parameter range that $\E_\kappa[u](t)$ is equivalent to the twisted energy $E_{\mathrm{ESV}}[u](t)$. For every initial state
$(u_0,u_1)\in H_0^1(\B)\times L^2(\B)$,
the functional calculus for $A_\kappa$ yields a unique
solution
\[
    u\in C(\mathbb R;H_0^1(\B))
    \cap C^1(\mathbb R;L^2(\B))
\]
of $\partial_t^2u+A_\kappa u=0$ in the variational
sense
\footnote{That is, for every $v\in D(A_\kappa^{1/2})$,
\[
    \frac{d^2}{dt^2}(u(t),v)_{L^2(\B)}
    +
    (A_\kappa^{1/2}u(t),A_\kappa^{1/2}v)_{L^2(\B)}
    =0
\]
in $\mathcal D'(\mathbb R)$;
cf.~\cite[Section~7.2]{Evans2010}.}, with $u(0)=u_0$ and $\partial_tu(0)=u_1$. Here and throughout,
we refer to solutions in this class as Friedrichs energy solutions (or simply energy solutions).

The homogeneous boundary condition in \eqref{eq:wave}
is imposed through the Friedrichs realization
$A_\kappa$. Every such energy solution satisfies
$\mathcal D_\kappa u=0$ in the strong $L^2$ sense
on bounded time intervals, as shown in \cref{remark1.1}.

For general energy solutions, the weighted Neumann
trace is understood distributionally, following the
construction in \cref{prop:extended-weak-trace}.
With this interpretation, our main result is as follows.
\begin{theorem}\label{thm:target}
Fix $n\geq2$ and $\kappa<0$.
For every $T>1$, there exists a constant
$C_{T,n,\kappa}>0$ such that every Friedrichs energy
solution $u$ of \eqref{eq:wave} whose weak Neumann trace
satisfies
\[
    \mathcal N_\kappa u
    \in L^2\bigl((-T,T)\times\Sph^{n-1}\bigr)
\]
obeys
\begin{equation}\label{eq:obs}
    \E_\kappa[u](0)
    \leq C_{T,n,\kappa}
    \int_{-T}^{T}\int_{\Sph^{n-1}}
        |\mathcal N_\kappa u|^2
        \,\dd\vartheta\,\dd t.
\end{equation}

Conversely, for every $0<T<1$, no constant
$C_{T,n,\kappa}>0$ makes \eqref{eq:obs} valid for all
finite spectral-sum solutions.
Thus the observation-time threshold $T_*=1$ is sharp.
\end{theorem}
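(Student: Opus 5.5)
We decompose a Friedrichs energy solution in spherical harmonics, $u(t,r\vartheta)=\sum_{\ell,m} r^{-(n-1)/2}\,v_{\ell m}(t,r)\,Y_{\ell m}(\vartheta)$. On each channel $A_\kappa$ reduces to the one-dimensional singular Sturm--Liouville operator
\[
L_\ell=-\partial_r^2+\frac{(\ell+\frac{n-3}{2})^2-\frac14}{r^2}+\frac{\mu^2-\frac14}{(1-r)^2}
\]
on $(0,1)$, with the Friedrichs (principal) condition at both endpoints. Let $\lambda_{\ell,k}=\omega_{\ell,k}^2$ be its eigenvalues, with normalized eigenfunctions $\phi_{\ell,k}$. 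The weighted Neumann trace of $\phi_{\ell,k}$ at $r=1$ is a number $\beta_{\ell,k}$ (the boundary norming constant), read off from the asymptotics $\phi_{\ell,k}\sim \beta_{\ell,k}\,y^{1/2+\mu}/(2\mu)$. We write $u=\sum c^\pm_{\ell m k}e^{\pm i\omega_{\ell,k}t}\phi_{\ell,k}Y_{\ell m}$. By orthogonality on $\Sph^{n-1}$, \eqref{eq:obs} reduces to a uniform one-dimensional statement. For each $\ell$ and all coefficients,
\[
\sum_k \omega_{\ell,k}^2\bigl(|c^+_k|^2+|c^-_k|^2\bigr)\le C_T\int_{-T}^T\Bigl|\sum_k \beta_{\ell,k}\bigl(c^+_ke^{i\omega_{\ell,k}t}+c^-_ke^{-i\omega_{\ell,k}t}\bigr)\Bigr|^2\dd t ,
\]
and $C_T$ must not depend on $\ell$. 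For general energy solutions the right side is finite exactly when $\mathcal N_\kappa u\in L^2$, via the distributional trace of \cref{prop:extended-weak-trace}. We first prove the estimate for finite sums and then pass to the limit.

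The one-dimensional estimate needs three spectral inputs, uniform in $\ell$.

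\textbf{(i) Norming lower bound.} $|\beta_{\ell,k}|^2\gtrsim \omega_{\ell,k}^2$. We would obtain this from a Rellich/Pohozaev-type multiplier identity with $(r-\text{const})\partial_r$ adapted to the weight $y^{-\kappa}$, together with the positivity of the centrifugal term. Heuristically, the identity gives $|\beta|^2 \approx 2\lambda\int\phi^2 + $ terms of favorable sign coming from the $r^{-2}$ potential.

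\textbf{(ii) Uniform gap.} $\omega_{\ell,k+1}-\omega_{\ell,k}\ge\gamma>0$, uniformly. We would prove this by Prüfer angle or WKB comparison; the asymptotic gap is $\pi$.

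\textbf{(iii) Counting bound with the sharp constant.} $\#\{k:\omega_{\ell,k}\in[R,R+L]\}\le L/\pi+C$, uniformly in $\ell$ and $R$. The length of the classically allowed region is at most $1$, so the upper Beurling density is at most $1/\pi$.

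Given (ii) and (iii), a quantitative Ingham--Beurling inequality gives observability for any interval of length $2T>2\pi/\pi=2$, i.e. $T>1$. The uniform gap handles the finitely many clustering frequencies: it lets us split off a bounded-size block, for instance by Haraux's method of removing finitely many frequencies. Combining with (i) yields \eqref{eq:obs}.

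The main obstacle is uniformity in $\ell$ of (iii) and (i). For large $\ell$ the low eigenvalues lie near the barrier top, where WKB degenerates. Here we would use monotonicity: the counting function for $L_\ell$ is dominated by that of the operator with the centrifugal term dropped, i.e. the $\ell$-independent comparison using min--max. This handles the upper density. The gap near turning points we would treat via a Sturm comparison on the interval near $r=1$ where the potential is only the $(1-r)^{-2}$ term, which is $\ell$-independent.

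For sharpness when $T<1$, we take the radial channel $\ell=0$ (or any fixed $\ell$). There $\omega_{0,k}=\pi k+O(1)$, so the density is exactly $1/\pi$. By Beurling's necessity (Kahane) theorem, the Ingham inequality fails on intervals of length $<2$. Concretely, we would construct a finite exponential sum concentrated at large $k$ that is small on $(-T,T)$ relative to its coefficients. Using $|\beta_{0,k}|\lesssim\omega_{0,k}$ (the matching upper bound), this violates \eqref{eq:obs}. Physically, such solutions correspond to radially focused wave packets that travel from the boundary to the center and back in time $2$.
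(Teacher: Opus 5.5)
Your architecture matches the paper's: spherical reduction, three spectral inputs uniform in $\ell$, and a uniform quantitative Ingham--Beurling lemma with density $1/\pi$. However, the mechanisms you propose for the uniform inputs fail, and these inputs are the substance of the proof.

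For (iii), the lemma needs the \emph{window} bound $\#\{k:\omega_{\ell,k}\in[s,s+R]\}\le R/\pi+C$ uniformly in $s$ and $\ell$. Min--max comparison with the operator without the centrifugal term bounds only $N_\ell(\omega)$, not its increments. Because the comparison uses nothing but positivity of $b/r^2$, it applies equally to a constant potential $b$ on $(0,1)$. Its allowed region is all of $(0,1)$, yet its frequencies $\sqrt{\pi^2k^2+b}$ put roughly $\pi^{-1}(2\sqrt b)^{1/2}$ points into $[\sqrt b,\sqrt b+1]$ and have gaps $\approx\pi^2(2k+1)/(2\sqrt b)\to0$. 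So neither (iii) nor (ii) follows from positivity or from the length of the allowed region. The local count is governed by increments of the action, and the spherical structure is what keeps them at rate $\le1$. The paper proves the uniform two-sided Weyl law $|N_{a,b}(\omega)-\pi^{-1}\Theta_b(\omega)|\le C_a$, using Pr\"ufer counting for convex potentials and the action-defect bound $0\le\Theta_b-\Phi_{a,b}\le\pi\sqrt{2a}$. It then uses $\partial_\omega\Theta_b=\sqrt{1-b/\omega^2}\le1$.

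Likewise, for (ii), near $r=1$ the potential is not ``only the $(1-r)^{-2}$ term''. There $b/r^2\approx b+2b(1-r)$ dominates, and it is precisely this gradient, producing an Airy layer of width $b^{-1/3}$, that keeps the gaps open when $\nu/\omega\to1$. The paper obtains (ii) by contradiction. It treats the transversal regime $\nu/\omega\le\tau_*<1$ with the rescaling $h=\omega^{-1}$, and the glancing regime with a boundary-layer rescaling. The glancing case then uses either the matching-phase lemma ($\eta_j\to0$) or compactness toward the model $-\eta_\infty^2\partial_x^2+\eta_\infty^2a/x^2+2x$.

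For (i), any multiplier $m\partial_r$ with $m$ smooth up to $r=1$, such as $(r-r_0)\partial_r$, yields a vanishing boundary term there. Indeed $\phi\sim c\,y^{\mu+1/2}$ gives $\phi'^2-a\phi^2/y^2\sim(\mu+\frac12)c^2y^{2\mu-1}\to0$. With $m=r$ the identity collapses to $\lambda=a\int_0^1\phi^2y^{-3}\,\dd r$, which says nothing about $\beta$. A multiplier that sees $\beta$ must be as singular as $y^{2\kappa}$ at $r=1$. The centrifugal term then contributes $-\frac{2b}{r}(m/r)'\phi^2$ to the bulk integrand, which is negative near $r=1$ since $m/r\to+\infty$. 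So the favourable sign you invoke is absent exactly where the glancing modes live; the paper gets (i) from the same regime analysis as the gap.

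Your necessity argument uses $|\beta_{0,k}|\lesssim\omega_{0,k}$, which is false: for fixed $\nu$ the transversal estimate gives $|\beta_{\nu,k}|^2\gtrsim\omega_{\nu,k}^{2\mu+1}$. With $d_k=\beta_kc_k$, the energy is $\sum_k(\omega_k^2/|\beta_k|^2)|d_k|^2$ and $\omega_k^2/|\beta_k|^2\lesssim\omega_k^{1-2\mu}\to0$. Hence failure of the unweighted Ingham inequality does not contradict \eqref{eq:obs} unless you also prove a failure rate on frequency windows that beats this polynomial weight. The paper uses finite propagation speed instead. Initial velocity in $C_c^\infty(\{|x|<\rho\})$ with $\rho<1-T$ gives $u\in\mathcal U_{\rm obs}$ with $\mathcal N_\kappa u=0$ on $\Gamma_T$ and positive energy. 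Its spectral truncations then satisfy $\|\mathcal N_\kappa u_j\|_{L^2(\Gamma_T)}^2\le B_{T,\mu}Q(u_j-u)\to0$.

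Finally, passing from finite sums to every solution with $\mathcal N_\kappa u\in L^2(\Gamma_T)$ requires proving $Q<\infty$ for such $u$, which the distributional trace alone does not give. The paper mollifies in time, applies the lower bound on $(-T_1,T_1)$ with $1<T_1<T$, and uses Fatou. Also, the radial parameter is $\nu_\ell=\ell+\frac{n-2}{2}$, not $\ell+\frac{n-3}{2}$.
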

The following remarks highlight the contributions of
our main theorem and clarify its relation to, and
differences from, the results of \cite{ESV2021}.
\begin{remark}\label{remark1.1}
Every Friedrichs energy solution satisfies
$\mathcal D_\kappa u=0$ strongly in
$L^2((-T,T)\times\Sph^{n-1})$ for every $T>0$.
Indeed, the map $r\mapsto u(\,\cdot\,,r\,\cdot\,)$
belongs to
$H^1((1/2,1);L^2((-T,T)\times\Sph^{n-1}))$
and has zero trace at $r=1$.
The fundamental theorem of calculus and
Cauchy--Schwarz give
\[
    \bigl\|
        y^{-\kappa}u(\,\cdot\,,(1-y)\,\cdot\,)
    \bigr\|_{L^2((-T,T)\times\Sph^{n-1})}^2
    \leq
    C_n y^{1-2\kappa}
    \int_{-T}^{T}\|\nabla u(t)\|_{L^2(\B)}^2\,\dd t
    \longrightarrow0.
\]
Thus the Friedrichs formulation imposes the homogeneous
weighted Dirichlet condition used in \cite{ESV2021}.

The weak Neumann trace agrees with the classical
weighted trace under the boundary-admissibility
assumptions of \cite{ESV2021}; see
\cref{prop:app-inclusion}.
The initial states for which this weak trace belongs
to $L^2((-T,T)\times\Sph^{n-1})$ are characterized
in \cref{prop:observation-domain}.

For $n\geq3$ and $-\frac12<\kappa<0$,
\cref{prop:app-energy-comparison,prop:app-inclusion}
show that our estimate applies to every
boundary-admissible energy solution of the unperturbed
homogeneous Dirichlet problem considered in
\cite[Theorem~5.1]{ESV2021}, with the same boundary
observation and an equivalent energy.
Consequently, \cref{thm:target} sharpens the
observation-time range for that model, extends the
parameter range to every fixed $\kappa<0$, and
includes dimension two.
Moreover, \cref{rem:app-strict-inclusion} shows that,
in the common parameter range, our observation class
strictly contains the boundary-admissible energy
class of \cite{ESV2021}.
\end{remark}
\begin{remark}
Our parameter range includes both
$-\frac12<\kappa<0$, considered in \cite{ESV2021},
and $\kappa\leq-\frac12$, corresponding respectively
to $\frac12<\mu<1$ and $\mu\geq1$.
The radial endpoint $r=1$ is limit-circle in the
former case and limit-point in the latter;
see \cite[Theorem~2.4]{KST2010}.
Thus no additional self-adjoint boundary condition
is required at $r=1$ in the latter range.
For $0<\kappa<\frac12$, corresponding to
$0<\mu<\frac12$, the endpoint remains limit-circle,
but the modal counterexample in \cref{rem:positive-kappa-failure} shows that
the boundary observability inequality considered here
fails for every finite observation time. The critical Hardy case $\kappa=\frac12$ ($\mu=0$)
lies outside the scope of this paper:
the Friedrichs condition selects the logarithm-free
branch and no longer implies $\mathcal D_\kappa u=0$
with the trace convention
\eqref{eq:weighted-boundary-traces}.
\end{remark}
\begin{remark}
The sharp threshold $T_*=1$ coincides with that for
the free Dirichlet wave equation on the unit ball;
see \cite[Section~2.3, Theorem~2.2]{Zuazua2024} and the references therein.
Here $T$ denotes the half-length of the observation
interval $(-T,T)$.

For the free wave equation, the lower time obstruction
reflects finite propagation at unit speed: for $T<1$,
waves with initial data supported sufficiently close
to the centre remain away from the boundary throughout
$(-T,T)$. The same finite-propagation argument applies to the present wave equation with an inverse-square boundary potential. Together with approximation by finite spectral sums, it shows that \eqref{eq:obs} fails for $T<1$, even when restricted to finite spectral-sum solutions. Combined with the estimate for $T>1$, this establishes
the sharp threshold $T_*=1$. The critical case $T=1$ remains open.
\end{remark}

\subsection{Related results}
Evolution equations with inverse-square potentials have
attracted considerable attention because of the analytical
difficulties associated with their critical singularities.
Dispersive estimates have been established for potentials
singular at an isolated point
\cite{BarceloRuizVega2006,BurqEtAl2004}, and related
questions have been studied for singularities along
timelike hypersurfaces \cite{Bachelot2011}.
In the closely related setting of asymptotically
anti-de Sitter spacetimes, Warnick \cite{Warnick2013}
developed a well-posedness theory for wave equations
with Dirichlet, Neumann, and Robin boundary conditions
in an appropriate range of the mass parameter.

Observability and controllability for equations with
inverse-square potentials on bounded domains have also
been extensively investigated.
Vancostenoble and Zuazua \cite{VancostenobleZuazua2009}
established boundary observability and exact boundary
controllability for wave and Schr\"odinger equations
with an interior point singularity, under suitable
geometric conditions on the observed boundary portion
and restrictions on the potential strength.
Their analysis combines multiplier identities with
sharp Hardy inequalities.
Cazacu \cite{Cazacu2012} treated the corresponding
problems when the singular point lies on the boundary,
using boundary Hardy inequalities and a Pohozaev identity
to obtain observability and controllability results. 
For heat equations, null controllability and the
associated observability estimates have been studied
for interior point singularities
\cite{VancostenobleZuazua2008,Ervedoza2008},
boundary point singularities \cite{Cazacu2014},
and potentials singular along the entire boundary
\cite{BiccariZuazua2016}.
In particular, Biccari and Zuazua \cite{BiccariZuazua2016}
considered an inverse-square potential defined in terms
of the distance to the boundary and established null
controllability by an interior distributed control.

In earlier joint work \cite{WeiDuanXu2026}, we studied
exact observability for Schr\"odinger equations with
inverse-square potentials on the half-line using
harmonic analysis, particularly a Logvinenko--Sereda
type theorem for the Hankel transform.
This study motivates our use of spherical harmonic
decomposition to reduce the present problem to a
family of one-dimensional singular equations.
\paragraph{Organization of the paper.}
Section~2 gives the exact spherical reduction,
the spectral representation of energy solutions,
and the definition of the observation domain.
Section~3 reduces observability to uniform spectral
estimates through a quantitative Ingham--Beurling
inequality and deduces the main theorem from these
estimates and the results of Appendix~A.
Section~4 proves the uniform density and frequency
gap estimates.
Appendix~A constructs the distributional Neumann
trace, characterizes the maximal $L^2$ observation
domain, proves the lower time obstruction, and
establishes an independent lower bound for the
observation coefficients.
Appendix~B compares the energies and solution
classes with those of \cite{ESV2021} and identifies
the corresponding boundary traces.
\paragraph{AI disclosure.}
The author's interest in this problem and the main ideas
underlying the present approach originated during
the author's earlier joint work \cite{WeiDuanXu2026}.
AI(OpenAI’s GPT-5.6 Sol and GPT-6 Astra) tools were subsequently used to assist with proof development, drafting, language editing, and LaTeX
preparation. The author directed the mathematical development,
identified gaps in preliminary arguments, and determined
the revisions needed to obtain the stated results.
In particular, the author guided the extension of
arguments for finite spectral sums to general energy
solutions through the construction of a distributional
boundary trace and the characterization of its maximal
$L^2$ observation domain. The author also directed the reformulation of the spectral-gap argument using globally defined principal
endpoint solutions from the theory of perturbed Bessel
operators developed in \cite{KST2010, KostenkoTeschl2011}.
The author checked and rewrote all AI-generated material
incorporated into the manuscript and takes full
responsibility for the mathematical content of
this article.
\paragraph{Notation.}
We write $\mathbb N=\{1,2,\ldots\}$,
$\mathbb N_0=\mathbb N\cup\{0\}$, and
$s_+:=\max\{s,0\}$.
For $T>0$, set
\[
    I_T:=(-T,T),
    \qquad
    \Gamma_T:=I_T\times\Sph^{n-1},
\]
where $\dd\vartheta$ denotes the standard surface
measure on $\Sph^{n-1}$.
Inner products are linear in the first argument.
We use the corresponding convention for
distributional pairings, so that
$\langle f,\psi\rangle=\int f\overline{\psi}$
for locally integrable $f$.

The indices $\ell\in\mathbb N_0$, $1\leq m\leq d_\ell$,
and $k\in\mathbb N$ denote the spherical harmonic
degree, its multiplicity index, and the radial
eigenvalue index, respectively; sums over
$\ell,m,k$ run over these ranges.
The fixed parameter $\mu$ is suppressed in
$\omega_{\nu,k}$, $\phi_{\nu,k}$, $c_{\nu,k}$,
and $\gamma_{\nu,k}$.
We denote by $j_{\rho,k}$ the $k$th positive zero
of the Bessel function $J_\rho$.
Positive constants $c$ and $C$ may change from line
to line, and their relevant dependence is indicated
in the statements.
In estimates stated to be uniform in the modes,
these constants are independent of $\nu$ and $k$.
For nonnegative quantities, $X\lesssim Y$ means
$X\leq CY$, and $X\asymp Y$ means that both
$X\lesssim Y$ and $Y\lesssim X$ hold.
\section{Exact spherical reduction}

In this section, we decompose $A_\kappa$ into radial
Friedrichs operators, derive the corresponding
one-dimensional wave equations, and describe the
endpoint asymptotics and weighted Neumann traces
of the associated eigenfunctions.
We follow the spherical decomposition used in
\cite[Section~10.4]{TeschlQM2009}, identifying the
self-adjoint realizations through their closed
quadratic forms.

\paragraph{The orthogonal decomposition of the Hilbert space.}

Let
\[
    \{Y_{\ell,m}:1\leq m\leq d_\ell\}
\]
be an orthonormal basis of the degree-\(\ell\) spherical
harmonics, so that
\[
    -\Delta_{\Sph^{n-1}}Y_{\ell,m}
    =
    \ell(\ell+n-2)Y_{\ell,m},
    \qquad \ell\in\mathbb N_0.
\]
The union of these bases is an orthonormal basis of
\(L^2(\Sph^{n-1})\).

Set
\[
    \mathcal R
    :=
    L^2\bigl((0,1),r^{n-1}\dd r\bigr)
\]
and define the closed subspaces
\[
    \mathcal K_{\ell,m}
    :=
    \left\{
        R(r)Y_{\ell,m}(\vartheta):
        R\in\mathcal R
    \right\}
    \subset L^2(\B).
\]
The map \(R\mapsto RY_{\ell,m}\) is an isometry from
\(\mathcal R\) onto \(\mathcal K_{\ell,m}\).
Completeness and orthogonality of the spherical harmonics
give the Hilbert-space decomposition
\begin{equation}\label{eq:angular-hilbert-decomposition}
    L^2(\B)
    =
    \bigoplus_{\ell\geq0}
    \bigoplus_{m=1}^{d_\ell}\mathcal K_{\ell,m}.
\end{equation}
The orthogonal projection onto \(\mathcal K_{\ell,m}\) is
\[
    (P_{\ell,m}f)(r,\vartheta)
    =
    \left(
        \int_{\Sph^{n-1}}
        f(r,\eta)\overline{Y_{\ell,m}(\eta)}
        \,\dd\eta
    \right)Y_{\ell,m}(\vartheta).
\]
In particular, for every \(f\in L^2(\B)\),
\[
    f=\sum_{\ell,m}P_{\ell,m}f
    \quad\text{in }L^2(\B),
    \qquad
    \|f\|_{L^2(\B)}^2
    =
    \sum_{\ell,m}\|P_{\ell,m}f\|_{L^2(\B)}^2.
\]
This decomposition applies to arbitrary \(L^2\) functions.
\paragraph{Reduction to radial Friedrichs operators.}

The Friedrichs operator $A_\kappa$ is associated with
the closed form
\[
    \mathfrak a_\kappa[f,g]
    =
    \int_\B
    \left(
        \nabla f\cdot\overline{\nabla g}
        +\frac{a}{(1-r)^2}f\overline g
    \right)\dd x,
    \qquad
    D(\mathfrak a_\kappa)=H_0^1(\B).
\]
The space $\mathcal C=C_c^\infty(\B\setminus\{0\})$
is a form core: removing the origin does not change
the $H_0^1$ closure for $n\geq2$, using logarithmic
cutoffs when $n=2$.

Polar integration and spherical harmonic orthogonality give
\[
    P_{\ell,m}\mathcal C\subset\mathcal C,
    \qquad
    \mathfrak a_\kappa[P_{\ell,m}f,g]
    =\mathfrak a_\kappa[f,P_{\ell,m}g],
    \qquad
    \mathfrak a_\kappa[P_{\ell,m}f]
    \leq\mathfrak a_\kappa[f].
\]
By form-core approximation, these identities extend
to the form domain. The first representation theorem
\cite[Chapter~VI, Theorem~2.1(iii)]{Kato1995}
then yields
\[
    P_{\ell,m}D(A_\kappa)\subset D(A_\kappa),
    \qquad
    A_\kappa P_{\ell,m}=P_{\ell,m}A_\kappa
    \quad\text{on }D(A_\kappa).
\]
Thus $\mathcal K_{\ell,m}$ reduces $A_\kappa$.
Denoting its self-adjoint restriction by $A_{\ell,m}$,
we obtain
\[
    A_\kappa=\bigoplus_{\ell,m}A_{\ell,m}.
\]

Put
\[
    \alpha:=\frac{n-1}{2},
    \qquad
    \nu_\ell:=\ell+\frac{n-2}{2},
    \qquad
    \mu:=\frac12-\kappa,
    \qquad
    a=\mu^2-\frac14.
\]
The unitary map
\[
    U_{\ell,m}:\mathcal K_{\ell,m}\to L^2(0,1),
    \qquad
    U_{\ell,m}(RY_{\ell,m})=r^\alpha R,
\]
removes the radial measure; cf.
\cite[Section~10.4]{TeschlQM2009}.
For $v\in C_c^\infty(0,1)$, direct computation gives
\begin{equation}\label{eq:radial-conjugation}
    -\Delta(r^{-\alpha}vY_{\ell,m})
    =
    r^{-\alpha}
    \left(
        -v''+\frac{\nu_\ell^2-\frac14}{r^2}v
    \right)Y_{\ell,m}.
\end{equation}
Consequently,
\[
    \mathfrak a_\kappa[U_{\ell,m}^{-1}v]
    =
    \int_0^1
    \left(
        |v'|^2
        +\frac{\nu_\ell^2-\frac14}{r^2}|v|^2
        +\frac{\mu^2-\frac14}{(1-r)^2}|v|^2
    \right)\dd r.
\]
Since $P_{\ell,m}\mathcal C$ is a form core in
$\mathcal K_{\ell,m}$ and
$U_{\ell,m}(P_{\ell,m}\mathcal C)=C_c^\infty(0,1)$,
the operator $U_{\ell,m}A_{\ell,m}U_{\ell,m}^{-1}$
is the Friedrichs realization of
\begin{equation}\label{eq:radial-op}
    H_{\nu_\ell,\mu}
    =
    -\frac{\dd^2}{\dd r^2}
    +\frac{\nu_\ell^2-\frac14}{r^2}
    +\frac{\mu^2-\frac14}{(1-r)^2},
    \qquad 0<r<1,
\end{equation}
initially defined on $C_c^\infty(0,1)$.
This includes the critical sector $\nu_\ell=0$,
where the radial form is obtained by closing the combined quadratic expression and the logarithm-free endpoint solution is selected.
At limit-point endpoints, no additional boundary
condition is imposed.

Thus the unitary map
\[
    (\mathcal Uf)_{\ell,m}:=U_{\ell,m}P_{\ell,m}f
\]
gives
\begin{equation}\label{eq:operator-direct-sum}
    \mathcal U A_\kappa\mathcal U^{-1}
    =
    \bigoplus_{\ell,m}H_{\nu_\ell,\mu}.
\end{equation}
This is an identity of self-adjoint operators,
with direct sum domain
\[
    \left\{
        (v_{\ell,m})\in\bigoplus_{\ell,m}L^2(0,1):
        v_{\ell,m}\in D(H_{\nu_\ell,\mu}),\
        \sum_{\ell,m}
        \|H_{\nu_\ell,\mu}v_{\ell,m}\|_{L^2(0,1)}^2
        <\infty
    \right\}.
\]

\paragraph{Application to the wave equation.}

For an energy solution $u$, define
\begin{equation}\label{eq:angular-projection}
    \widetilde u_{\ell,m}(t,r)
    :=
    \int_{\Sph^{n-1}}
    u(t,r,\vartheta)\overline{Y_{\ell,m}(\vartheta)}
    \,\dd\vartheta,
\end{equation}
and set
\begin{equation}\label{eq:radial-unitary-conjugation}
    v_{\ell,m}(t,r):=r^\alpha\widetilde u_{\ell,m}(t,r).
\end{equation}
Then
\begin{equation}\label{eq:spherical-expansion}
    u(t,r,\vartheta)
    =
    \sum_{\ell,m}
    r^{-\alpha}v_{\ell,m}(t,r)Y_{\ell,m}(\vartheta),
\end{equation}
with convergence in the form norm, and
\begin{equation}\label{eq:polar-parseval}
    \|u(t)\|_{L^2(\B)}^2
    =
    \sum_{\ell,m}\|v_{\ell,m}(t)\|_{L^2(0,1)}^2,
    \qquad
    \|A_\kappa^{1/2}u(t)\|_{L^2(\B)}^2
    =
    \sum_{\ell,m}
    \|H_{\nu_\ell,\mu}^{1/2}v_{\ell,m}(t)\|_{L^2(0,1)}^2.
\end{equation}
By \eqref{eq:operator-direct-sum} and functional calculus,
each radial component is the energy solution of
\begin{equation}\label{eq:radial-wave-equation}
    \partial_t^2v_{\ell,m}+H_{\nu_\ell,\mu}v_{\ell,m}=0,
\end{equation}
with initial data
$U_{\ell,m}P_{\ell,m}u_0$ and
$U_{\ell,m}P_{\ell,m}u_1$.
\paragraph{Endpoint asymptotics and weighted Neumann traces.}
Let $\omega_{\nu_\ell,k}^2$, $k\geq1$, be the simple
eigenvalues of $H_{\nu_\ell,\mu}$, with corresponding
real, $L^2(0,1)$-normalized eigenfunctions
$\phi_{\nu_\ell,k}$.
The indicial roots at $r=0$ and $r=1$ are
$1/2\pm\nu_\ell$ and $1/2\pm\mu$, respectively.
The Friedrichs realization selects the principal
solution at each endpoint
\cite[Theorem~11.19]{EGNT2013}: the branch with
exponent $1/2+\rho$ for $\rho>0$, and the
logarithm-free branch for $\rho=0$;
see \cref{Principal solution}. Hence
\begin{equation}\label{eq:frob-ends}
    \phi_{\nu_\ell,k}(r)=O(r^{\nu_\ell+1/2})
    \quad(r\downarrow0),
    \qquad
    \phi_{\nu_\ell,k}(r)=O((1-r)^{\mu+1/2})
    \quad(r\uparrow1).
\end{equation}
In particular, when $n=2$ and $\ell=0$, the
$r^{1/2}\log r$ branch is excluded, so the first
estimate remains valid.

Since the remaining potential
$(\nu_\ell^2-\frac14)/r^2$ is smooth near $r=1$,
the principal-solution estimates give, with $y=1-r$,
\begin{equation}\label{eq:norming}
    \begin{aligned}
        \phi_{\nu_\ell,k}(r)
        &=c_{\nu_\ell,k}y^{\mu+1/2}(1+O(y^2)),\\
        \phi_{\nu_\ell,k}'(r)
        &=-\left(\mu+\frac12\right)c_{\nu_\ell,k}
          y^{\mu-1/2}+O(y^{\mu+3/2}),
    \end{aligned}
    \qquad y\downarrow0,
\end{equation}
where $c_{\nu_\ell,k}\neq0$ and the remainder
constants may depend on the mode.

Define
\begin{equation}\label{eq:full-eigenfunctions}
    \Phi_{\ell,m,k}(r,\vartheta)
    :=r^{-(n-1)/2}\phi_{\nu_\ell,k}(r)
      Y_{\ell,m}(\vartheta).
\end{equation}
By \eqref{eq:operator-direct-sum} and radial
completeness, these functions form an orthonormal
eigenbasis of $A_\kappa$ in $L^2(\B)$, with
$A_\kappa\Phi_{\ell,m,k}
=\omega_{\nu_\ell,k}^2\Phi_{\ell,m,k}$.

Put $R_{\ell,k}=r^{-(n-1)/2}\phi_{\nu_\ell,k}$.
Since $\mu+\frac12=1-\kappa$ and
$r^{-(n-1)/2}=1+O(y)$, \eqref{eq:norming} yields
\[
    R_{\ell,k}
    =c_{\nu_\ell,k}y^{1-\kappa}+O(y^{2-\kappa}),
    \qquad
    R_{\ell,k}'
    =-(1-\kappa)c_{\nu_\ell,k}y^{-\kappa}
      +O(y^{1-\kappa}).
\]
Thus $\mathcal D_\kappa\Phi_{\ell,m,k}=0$.
Define the boundary observation coefficient by
\begin{equation}\label{eq:gamma}
    \gamma_{\nu_\ell,k}
    :=-(1-2\kappa)c_{\nu_\ell,k}
    =-2\mu c_{\nu_\ell,k}.
\end{equation}
Using \eqref{eq:weighted-boundary-traces}, we obtain
\begin{equation}\label{eq:trace}
    \begin{aligned}
        \mathcal N_\kappa\Phi_{\ell,m,k}
        &=
        \lim_{r\uparrow1}
        \left[
            y^\kappa R_{\ell,k}'
            +\kappa y^{\kappa-1}R_{\ell,k}
        \right]Y_{\ell,m}\\
        &=\gamma_{\nu_\ell,k}Y_{\ell,m}.
    \end{aligned}
\end{equation}

\subsection{Spectral representation and the observation domain}
We first give the spectral representation of energy
solutions and characterize the observation domain,
deferring the proofs concerning this domain to
\cref{sec:observation-domain}. We then derive a necessary condition for
boundary observability. The main task of this paper
is to prove that this condition, together with uniform
spectral estimates, yields boundary observability
in the sharp time range.

Following the standard abstract wave equation framework
of \cite[Proposition~3.7.6]{TucsnakWeiss2009}, let
\[
    \mathcal H_\kappa
    :=
    D(A_\kappa^{1/2})\times L^2(\B)
\]
be the energy space, equipped with the norm
\[
    \|(u_0,u_1)\|_{\mathcal H_\kappa}^2
    :=
    \|A_\kappa^{1/2}u_0\|_{L^2(\B)}^2
    +
    \|u_1\|_{L^2(\B)}^2
    =
    2\E_\kappa[u](0).
\]
Since $A_\kappa\geq cI$ for some $c>0$,
$\mathcal H_\kappa$ is a Hilbert space under this norm. For initial data
\[
    (u_0,u_1)\in\mathcal H_\kappa,
    \qquad
    u(0)=u_0,\quad \partial_tu(0)=u_1,
\]
write
\begin{equation}\label{eq:initial-modal-coefficients}
    u_0
    =
    \sum_{\ell,m,k}U_{\ell,m,k}\Phi_{\ell,m,k},
    \qquad
    u_1
    =
    \sum_{\ell,m,k}V_{\ell,m,k}\Phi_{\ell,m,k}.
\end{equation}
The energy-space condition is equivalent to
\[
    \sum_{\ell,m,k}
    \left(
        \omega_{\nu_\ell,k}^2|U_{\ell,m,k}|^2
        +|V_{\ell,m,k}|^2
    \right)<\infty.
\]

The spectral theorem gives the following representation
of the wave evolution constructed in
\cite[Proposition~3.8.7]{TucsnakWeiss2009}:
\begin{equation}\label{eq:abstract-wave-solution}
    u(t)
    =
    \cos(t\sqrt{A_\kappa})u_0
    +
    \frac{\sin(t\sqrt{A_\kappa})}{\sqrt{A_\kappa}}u_1.
\end{equation}
It defines the unique energy solution
\[
    u\in
    C\bigl(\mathbb R;D(A_\kappa^{1/2})\bigr)
    \cap C^1\bigl(\mathbb R;L^2(\B)\bigr)
\]
of \eqref{eq:wave}, with the boundary condition incorporated
in the Friedrichs realization. We denote this class of
energy solutions by $\mathcal U_\kappa$. 

Consequently,
\begin{equation}\label{eq:cos-sin-full-wave}
    u(t,r,\vartheta)
    =
    \sum_{\ell,m,k}
    \left[
        U_{\ell,m,k}\cos(\omega_{\nu_\ell,k}t)
        +
        \frac{V_{\ell,m,k}}{\omega_{\nu_\ell,k}}
        \sin(\omega_{\nu_\ell,k}t)
    \right]
    \Phi_{\ell,m,k}(r,\vartheta).
\end{equation}
The series converges in the energy topology, locally
uniformly in time.

For the nonharmonic Fourier argument, it is convenient
to introduce the positive- and negative-frequency
coefficients
\begin{equation}\label{eq:apm-from-initial-data}
    a^+_{\ell,m,k}
    :=
    \frac12\left(
        U_{\ell,m,k}
        -\frac{iV_{\ell,m,k}}{\omega_{\nu_\ell,k}}
    \right),
    \qquad
    a^-_{\ell,m,k}
    :=
    \frac12\left(
        U_{\ell,m,k}
        +\frac{iV_{\ell,m,k}}{\omega_{\nu_\ell,k}}
    \right).
\end{equation}
Thus
\begin{equation}\label{eq:initial-data-from-apm}
    U_{\ell,m,k}
    =
    a^+_{\ell,m,k}+a^-_{\ell,m,k},
    \qquad
    V_{\ell,m,k}
    =
    i\omega_{\nu_\ell,k}
    \bigl(a^+_{\ell,m,k}-a^-_{\ell,m,k}\bigr).
\end{equation}
For real initial data and a real eigenbasis,
$a^-_{\ell,m,k}=\overline{a^+_{\ell,m,k}}$.
These coefficients are determined entirely by the
initial state.

Returning to the radial notation in
\eqref{eq:spherical-expansion}, we obtain
\begin{equation}\label{eq:modal-wave}
    v_{\ell,m}(t,r)
    =
    \sum_{k\geq1}
    \left(
        a^+_{\ell,m,k}e^{i\omega_{\nu_\ell,k}t}
        +
        a^-_{\ell,m,k}e^{-i\omega_{\nu_\ell,k}t}
    \right)
    \phi_{\nu_\ell,k}(r).
\end{equation}
Orthogonality gives
\begin{equation}\label{eq:modal-energy-initial-data}
    \E_\kappa[u](0)
    =
    \frac12
    \sum_{\ell,m,k}
    \left(
        |V_{\ell,m,k}|^2
        +
        \omega_{\nu_\ell,k}^2|U_{\ell,m,k}|^2
    \right).
\end{equation}
The parallelogram identity and
\eqref{eq:apm-from-initial-data} yield the equivalent
expression
\begin{equation}\label{eq:modal-energy}
    \E_\kappa[u](0)
    =
    \sum_{\ell,m,k}\omega_{\nu_\ell,k}^2
    \left(
        |a^+_{\ell,m,k}|^2+|a^-_{\ell,m,k}|^2
    \right),
\end{equation}
where the factor $1/2$ is already included in the
definition \eqref{eq:energy}.

We next specify the class on which the boundary
observation is square integrable. Define
\begin{align}
    Q(u_0,u_1)
    &:=
    \sum_{\ell,m,k}|\gamma_{\nu_\ell,k}|^2
    \left(
        |a^+_{\ell,m,k}|^2+|a^-_{\ell,m,k}|^2
    \right),
    \label{eq:observation-coefficient-norm}\\
    \mathcal D_{\rm obs}
    &:=
    \left\{
        (u_0,u_1)\in\mathcal H_\kappa:
        Q(u_0,u_1)<\infty
    \right\}.
    \label{eq:observation-domain}
\end{align}
Equivalently, in terms of the initial modal coefficients,
\[
    Q(u_0,u_1)
    =
    \frac12\sum_{\ell,m,k}|\gamma_{\nu_\ell,k}|^2
    \left(
        |U_{\ell,m,k}|^2
        +
        \frac{|V_{\ell,m,k}|^2}{\omega_{\nu_\ell,k}^2}
    \right).
\]
Thus $Q$ is a weighted coefficient norm squared of
the initial state. The corresponding solution class is
\[
    \mathcal U_{\rm obs}
    :=
    \left\{
        u\in\mathcal U_\kappa:
        \bigl(u(0),\partial_tu(0)\bigr)
        \in\mathcal D_{\rm obs}
    \right\}.
\]
For a solution $u$, we also write
\[
    Q[u]:=Q\bigl(u(0),\partial_tu(0)\bigr).
\]

For $u\in\mathcal U_{\rm obs}$, define
\[
    g_{\ell,m}(t)
    :=
    \sum_{k\geq1}\gamma_{\nu_\ell,k}
    \left(
        a^+_{\ell,m,k}e^{i\omega_{\nu_\ell,k}t}
        +
        a^-_{\ell,m,k}e^{-i\omega_{\nu_\ell,k}t}
    \right),
\]
and
\[
    g(t,\vartheta)
    :=
    \sum_{\ell,m}g_{\ell,m}(t)Y_{\ell,m}(\vartheta).
\]

For a finite spectral-sum solution, the weighted Neumann
trace is defined by the classical boundary limit \eqref{eq:weighted-boundary-traces}. The
endpoint expansion of each eigenfunction shows that this
limit equals the corresponding finite sum defining \(g\).
More generally, whenever a sufficiently regular solution
has a classical weighted Neumann trace, that trace agrees
with \(g\).

As proved in \cref{sec:observation-domain}, the trace map
on finite spectral sums extends continuously from the
energy space into distributions. We denote this extension
by \(\N_\kappa u\) for an arbitrary energy solution \(u\).
Equivalently, it is the distributional limit of the extended
weighted traces on interior spheres; this interpretation
does not require restricting the radial derivative of a
general energy solution to a sphere.

For \(u\in\mathcal U_{\rm obs}\), the extended trace is
represented by the distribution \(g\) constructed above.
Thus
\[
    \N_\kappa u=g
    \quad\text{in }
    \mathcal D'\bigl((-T,T)\times\Sph^{n-1}\bigr),
    \qquad T>0.
\]
In particular, no pointwise boundary limit is taken
term by term in the infinite spectral expansion. 

Finally, \cref{sec:observation-domain} proves that,
for every $T>1$,
\[
    \mathcal U_{\rm obs}
    =
    \left\{
        u\in\mathcal U_\kappa:
        \N_\kappa u
        \in L^2\bigl((-T,T)\times\Sph^{n-1}\bigr)
    \right\}.
\]
Hence the spectral observation domain describes exactly
the energy solutions with square-integrable weak Neumann
trace on the observation interval. No bounded observation
map on the entire energy space is assumed.

For $\N_\kappa u
        \in L^2\bigl((-T,T)\times\Sph^{n-1}\bigr)$, Parseval's identity on the sphere and Tonelli's theorem
now give
\begin{align}
    \int_{-T}^{T}\int_{\Sph^{n-1}}
        |\N_\kappa u|^2\,\dd\vartheta\,\dd t
    &=
    \sum_{\ell,m}\int_{-T}^{T}|g_{\ell,m}(t)|^2\,\dd t
    \notag\\
    &=
    \sum_{\ell,m}\int_{-T}^{T}
    \left|
        \sum_{k\geq1}\gamma_{\nu_\ell,k}
        \left(
            a^+_{\ell,m,k}e^{i\omega_{\nu_\ell,k}t}
            +
            a^-_{\ell,m,k}e^{-i\omega_{\nu_\ell,k}t}
        \right)
    \right|^2\,\dd t.
    \label{eq:boundary-parseval}
\end{align}
Thus distinct spherical harmonic modes are orthogonal
in the full-boundary observation. Within each angular
mode, the remaining temporal interactions are handled
by the nonharmonic Fourier estimates.

We now derive a necessary condition for boundary
observability from \eqref{eq:boundary-parseval} and
\eqref{eq:modal-energy}. Testing the observation inequality
on a single eigenmode shows that the boundary coefficients
must satisfy a uniform lower bound
\[
    |\gamma_{\nu_\ell,k}|^2
    \geq c\,\omega_{\nu_\ell,k}^2,
\]
with \(c>0\) independent of both indices.
Conversely, a central part of our argument is to combine
this lower bound with uniform frequency separation and
local counting estimates, and then apply an
Ingham--Beurling type theorem to obtain boundary
observability for every \(T>1\).
Together with the failure of observability for \(T<1\),
this identifies the sharp observation-time threshold.
The following corollary establishes the necessary lower bound
whenever a large-time observability inequality is available.

\begin{corollary}[Uniform lower norming estimate]
\label{lem:norming-lower}
Fix $\kappa<0$. Suppose that there exist constants
$T_0,C_0>0$ such that observation inequality \eqref{eq:obs} holds
for every finite spectral-sum solution $w$.
Then
\begin{equation}\label{eq:gamma-lower}
    |\gamma_{\nu_\ell,k}|^2
    \geq c_0\,\omega_{\nu_\ell,k}^2,
    \qquad \ell\geq0,\quad k\geq1,
\end{equation}
where $c_0=(4C_0T_0)^{-1}$ is independent of $\ell$ and $k$.
\end{corollary}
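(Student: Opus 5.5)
Test the assumed inequality, with $T=T_0$ and $C_{T,n,\kappa}=C_0$, on a single real standing-wave eigenmode. Fix $\ell\geq0$, $k\geq1$, and some $1\leq m\leq d_\ell$. Take the finite spectral-sum solution
\[
    w(t)=\cos(\omega_{\nu_\ell,k}t)\,\Phi_{\ell,m,k},
\]
which corresponds to $U_{\ell,m,k}=1$ and all other coefficients zero. By \eqref{eq:apm-from-initial-data}, this means $a^+_{\ell,m,k}=a^-_{\ell,m,k}=\tfrac12$.

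First compute the energy. By \eqref{eq:modal-energy} (or directly from \eqref{eq:modal-energy-initial-data}),
\[
    \E_\kappa[w](0)=\omega_{\nu_\ell,k}^2\Bigl(\tfrac14+\tfrac14\Bigr)=\tfrac12\,\omega_{\nu_\ell,k}^2 .
\]

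Next compute the boundary observation. For a finite spectral sum the classical weighted trace is defined, and \eqref{eq:trace} gives
\[
    \N_\kappa w(t,\vartheta)=\gamma_{\nu_\ell,k}\cos(\omega_{\nu_\ell,k}t)\,Y_{\ell,m}(\vartheta).
\]
Since $\|Y_{\ell,m}\|_{L^2(\Sph^{n-1})}=1$ and $|\cos|\le1$,
\[
    \int_{-T_0}^{T_0}\int_{\Sph^{n-1}}|\N_\kappa w|^2\,\dd\vartheta\,\dd t
    =|\gamma_{\nu_\ell,k}|^2\int_{-T_0}^{T_0}\cos^2(\omega_{\nu_\ell,k}t)\,\dd t
    \le 2T_0\,|\gamma_{\nu_\ell,k}|^2 .
\]

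Inserting both computations into \eqref{eq:obs} gives
\[
    \tfrac12\,\omega_{\nu_\ell,k}^2\le 2C_0T_0\,|\gamma_{\nu_\ell,k}|^2,
    \qquad\text{that is,}\qquad
    |\gamma_{\nu_\ell,k}|^2\ge (4C_0T_0)^{-1}\omega_{\nu_\ell,k}^2 .
\]
The constant $c_0=(4C_0T_0)^{-1}$ involves only $C_0$ and $T_0$, so it is independent of $\ell$ and $k$. This is exactly \eqref{eq:gamma-lower}.

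The only point that needs care is that the trace of this single mode really is the classical limit \eqref{eq:trace}; this holds because $w$ is a finite spectral sum. The factor $2T_0$ comes from the crude bound $\cos^2\le1$, which is what produces the stated constant. Beyond this, the argument has no real obstacle.
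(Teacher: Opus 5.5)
Your proof is correct and follows the paper's argument exactly: test the assumed inequality on the single standing wave $\cos(\omega_{\nu_\ell,k}t)\Phi_{\ell,m,k}$, whose energy is $\omega_{\nu_\ell,k}^2/2$ and whose boundary observation on $(-T_0,T_0)$ is at most $2T_0|\gamma_{\nu_\ell,k}|^2$. You have written out explicitly the two computations that the paper leaves implicit, and they give the stated constant $c_0=(4C_0T_0)^{-1}$.
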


\begin{proof}
It suffices to test \eqref{eq:obs} on a single eigenmode. Fix a real
\(L^2(\mathbb B)\)-normalized eigenfunction
\(\Phi_{\ell,m,k}\) from \eqref{eq:full-eigenfunctions} and consider the
corresponding solution
\[
    w(t)
    =
    \cos(\omega_{\nu_\ell,k}t)\Phi_{\ell,m,k}.
\]
Applying \eqref{eq:obs} to this solution gives the desired estimate.
\end{proof}

Since the finite spectral-sum solutions used above have the
required interior regularity and finite energy. Their Friedrichs endpoint expansions, together with
the corresponding differentiated expansions, give
\[
    \lim_{y\downarrow0}y^{-\kappa}u=0,
    \qquad
    (1-2\kappa)
    \lim_{y\downarrow0}y^{-1+\kappa}u
    =-\N_\kappa u,
    \qquad
    \lim_{y\downarrow0}y^\kappa\partial_tu=0.
\]
These limits hold strongly in
$L^2((-T_0,T_0)\times\Sph^{n-1})$.
Together with the existence of the Neumann trace,
they verify the boundary admissibility conditions
required by \cite[Theorem~5.1]{ESV2021} and \cite[Theorem~1.14]{JenaShao2026}. 

Thus, for \(-\frac12<\kappa<0\) and \(n\geq3\),
\eqref{eq:obs} follows from \cite[Theorem~5.1]{ESV2021}.
For \(n=2\), the corresponding result is provided by
\cite[Theorem~1.14]{JenaShao2026}, which applies in all
dimensions under a convexity assumption that is satisfied by the
unit ball. Hence, in this parameter range, the lower bound
\eqref{eq:gamma-lower} is already available.

To extend the argument to the full range \(\kappa<0\), including the
limit-point regime \(\kappa\leq-\frac12\), we shall also establish
\eqref{eq:gamma-lower} directly from the radial equation in
\cref{Section6}. This direct approach provides a uniform lower
norming estimate throughout the entire parameter range and renders
the resulting boundary observability argument independent of the
existing large-time observability results.
\section{Reduction to uniform spectral estimates via Ingham--Beurling inequalities}

We recall two classical inequalities for nonharmonic Fourier
series. Let $\Lambda=\{\lambda_j:j\in\mathbb Z\}$ be a strictly
increasing sequence of real frequencies. For
$a=(a_j)\in\ell^2(\mathbb Z)$, write
\[
    F_a(t):=\sum_{j\in\mathbb Z}a_j e^{i\lambda_jt},
\]
where the series is understood in $L^2$ on bounded intervals.
The convergence asserted below follows from the corresponding
upper estimates.

Ingham's theorem gives a two-sided estimate in terms of the
minimum separation of the frequencies.

\begin{theorem}[Ingham {\cite{Ingham1936}}]
\label{thm:classical-ingham}
Suppose that
\[
    \lambda_{j+1}-\lambda_j\geq\delta>0,
    \qquad j\in\mathbb Z.
\]
For every bounded interval $I$ satisfying
\[
    |I|>\frac{2\pi}{\delta},
\]
there exist constants $A,B>0$, depending only on $|I|$ and
$\delta$, such that, for every $a\in\ell^2(\mathbb Z)$, the
series defining $F_a$ converges in $L^2(I)$ and satisfies
\[
    A\sum_{j\in\mathbb Z}|a_j|^2
    \leq
    \int_I |F_a(t)|^2\,\dd t
    \leq
    B\sum_{j\in\mathbb Z}|a_j|^2.
\]
\end{theorem}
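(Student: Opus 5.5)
We follow Ingham's original weight-function argument. First we normalize. Translating $I$ to $(-T,T)$ multiplies each $a_j$ by a unimodular factor $e^{i\lambda_j t_0}$, which does not change $\sum_j|a_j|^2$. So we may assume $I=(-T,T)$ with $T\delta>\pi$, and any constants obtained depend only on $T$ and $\delta$. We prove both inequalities first for finitely supported $a$. The upper estimate then shows that the partial sums of $F_a$ are Cauchy in $L^2(I)$, which gives convergence and extends both inequalities to all $a\in\ell^2(\mathbb Z)$ by density.

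\emph{Lower bound.} Take the weight $k(t)=\cos\bigl(\pi t/(2T)\bigr)\mathbf 1_{|t|<T}$, which satisfies $0\leq k\leq 1$ on $I$. Its Fourier transform is explicit:
\[
K(x):=\int_{-T}^{T}k(t)e^{ixt}\,\dd t=\frac{4\pi T\cos(Tx)}{\pi^2-4T^2x^2}.
\]
In particular $K(0)=4T/\pi$ and $|K(x)|\leq 4\pi T/(4T^2x^2-\pi^2)$ for $|x|>\pi/(2T)$. Expanding the square,
\[
\int_I|F_a|^2\,\dd t\;\geq\;\int_I k|F_a|^2\,\dd t=\sum_{j,l}a_j\overline{a_l}\,K(\lambda_j-\lambda_l).
\]
We bound the off-diagonal part with Schur's test, using $|a_j\overline{a_l}|\leq\frac12(|a_j|^2+|a_l|^2)$ and the separation $|\lambda_j-\lambda_l|\geq|j-l|\delta$. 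For each $j$,
\[
\sum_{l\neq j}|K(\lambda_j-\lambda_l)|\leq 2\sum_{m\geq1}\frac{4\pi T}{4T^2\delta^2m^2-\pi^2}.
\]
Write $\varepsilon=\pi/(T\delta)<1$ and use $\sum_{m\geq1}(m^2-\varepsilon^2/4)^{-1}\leq \ldots$; here one computes exactly via the partial fraction identity $\sum_{m\ge1}\frac{2}{4m^2-s^2}=\frac1{s^2}-\frac{\pi}{2s}\cot\frac{\pi s}{2}$. This gives that the off-diagonal sum is at most $\frac{4T}{\pi}\,\theta$ for some $\theta=\theta(T\delta)<1$ exactly when $T\delta>\pi$. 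Hence $A=\frac{4T}{\pi}(1-\theta)>0$. This summation is the main obstacle, because the sharp constant $2\pi/\delta$ depends on it. The cleanest route is Ingham's trick: compare $\sum_{m}$ with the exact value of $\int k|F|^2$ for the equally spaced sequence $\lambda_j=j\delta$, for which everything is computable, rather than estimating term by term.

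\emph{Upper bound.} This holds on any interval, with no restriction on its length. Apply the same Schur argument to $k$ on a dilated interval $(-2T',2T')$ with $T'$ chosen so that $T'\delta\geq\pi$; there $k\geq 1/\sqrt2$ on $(-T',T')$. Schur's test now gives
\[
\int_{-T'}^{T'}|F_a|^2\,\dd t\leq\sqrt2\sum_{j,l}|a_j||a_l||K(\lambda_j-\lambda_l)|\leq C(T'\delta)\,T'\sum_j|a_j|^2,
\]
where the sum over $l$ converges because $|K(x)|\lesssim T'/(1+T'^2x^2)$ and the $\lambda_l$ are $\delta$-separated. Covering $I$ by finitely many translates of $(-T',T')$, again using translation invariance, yields $B=B(|I|,\delta)$.
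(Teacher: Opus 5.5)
Your proposal is correct and is Ingham's classical weight-function proof with $k(t)=\cos(\pi t/(2T))$. The paper does not reproduce that proof; it simply cites \cite{Ingham1936}. Your partial-fraction computation gives $\theta=1-x\cot x$ with $x=\pi^2/(2T\delta)$, so $\theta<1$ precisely when $T\delta>\pi$.

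The closing suggestion to compare with the equally spaced sequence is unnecessary. For $\varepsilon<1$, the crude bound $4m^2-\varepsilon^2\geq 4m^2-1$ and the telescoping sum $\sum_{m\geq1}(4m^2-1)^{-1}=\frac12$ already give $\theta\leq\varepsilon^2<1$.
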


Beurling improved the sufficient interval length by replacing
the reciprocal of the minimum separation with the upper
density of the frequency set. Define
\[
    n_\Lambda^+(R)
    :=
    \sup_{s\in\mathbb R}
    \#\bigl(\Lambda\cap[s,s+R]\bigr),
    \qquad R>0,
\]
and
\[
    D^+(\Lambda)
    :=
    \limsup_{R\to\infty}\frac{n_\Lambda^+(R)}{R}.
\]

\begin{theorem}[Beurling {\cite{Beurling1989}}]
\label{thm:classical-beurling}
Suppose that $\Lambda$ is uniformly separated:
\[
    \inf_{j\neq k}|\lambda_j-\lambda_k|>0.
\]
For every bounded interval $I$ satisfying
\[
    |I|>2\pi D^+(\Lambda),
\]
there exist constants $A,B>0$ such that, for every
$a\in\ell^2(\mathbb Z)$, the series defining $F_a$ converges
in $L^2(I)$ and satisfies
\[
    A\sum_{j\in\mathbb Z}|a_j|^2
    \leq
    \int_I |F_a(t)|^2\,\dd t
    \leq
    B\sum_{j\in\mathbb Z}|a_j|^2.
\]
The constants are independent of $a$, but may depend on
$I$ and $\Lambda$.
\end{theorem}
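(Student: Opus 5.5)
This is Beurling's classical theorem, so the proof I would give is the standard reduction to an Ingham-type argument with a weight adapted to the upper density, rather than to the minimal gap. Set $\delta:=\inf_{j\neq k}|\lambda_j-\lambda_k|>0$. By translation invariance of $|F_a|$, assume $I=(-T,T)$ with $2T>2\pi D^+(\Lambda)$.

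\textbf{Upper bound.} This is routine. Cover $\mathbb R$ by intervals of length $\delta/2$, so each contains at most one frequency. Fix an even weight $w\ge \mathbf 1_I$ with nonnegative, integrable Fourier transform; a Fejér-type kernel works. Then
\[
\int |F_a|^2 w=\sum_{j,k}a_j\overline{a_k}\,\widehat w(\lambda_k-\lambda_j).
\]
Schur's test applies because separation gives $\sum_k|\widehat w(\lambda_k-\lambda_j)|\le C(\delta,T)$ uniformly in $j$. This also gives $L^2(I)$ convergence of the series, so it suffices to prove both bounds for finitely supported $a$.

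\textbf{Lower bound.}
\begin{enumerate}
\item Pick $R$ large and $\varepsilon>0$ such that $n_\Lambda^+(R)\le (D^++\varepsilon)R$ and $2T>2\pi(D^++2\varepsilon)$.
\item Build a Beurling-type majorant and minorant. Construct an entire function $G$ of exponential type $2T$ with $G\le \mathbf 1_I$ on $\mathbb R$, so that $\int_I|F_a|^2\ge \int |F_a|^2 G$. Its Fourier transform $\widehat G$ is supported in $[-2T,2T]$, and $\widehat G(0)=\int G>0$ is large relative to its tail.
\item Expand $\int|F_a|^2G=\sum_{j,k}a_j\overline{a_k}\widehat G(\lambda_j-\lambda_k)$. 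The diagonal contributes $\widehat G(0)\sum|a_j|^2$.
\item Bound the off-diagonal part with Schur's test by $\sup_j\sum_{k\ne j}|\widehat G(\lambda_j-\lambda_k)|$.
\end{enumerate}

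Beurling's insight is that the diagonal dominance must come from density, not from gaps. The construction I have in mind is to group frequencies into blocks of length $R$, each containing at most $(D^++\varepsilon)R$ points. One then chooses $G$ as a normalized sum of products of sinc-type functions, for instance $G(t)=\cos^2(\pi t/2T)$-like kernels multiplied by smoothing, so that on each block the frequencies behave like a perturbation of a lattice of spacing about $1/(D^++\varepsilon)$.

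\textbf{Main obstacle: the off-diagonal estimate.} A cleaner route, and the one I would actually try first, is Kahane's/Beurling's regrouping. Partition $\Lambda$ into finitely many subsequences $\Lambda_1,\dots,\Lambda_N$ such that the total density is unchanged but each $\Lambda_i$ has gap at least $N/(D^++\varepsilon)$. Applying Ingham's theorem (\cref{thm:classical-ingham}) to each $\Lambda_i$ would require $|I|>2\pi(D^++\varepsilon)/N$, which is easy to satisfy, but recombining the pieces is not orthogonal. So the recombination has to be handled by Ingham's divided-difference technique (Jaffard/Baiocchi–Komornik–Loreti). Frequencies within distance $\rho$ of one another are clustered into groups of at most $n^+_\Lambda(\rho)$ points. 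One proves an Ingham inequality for divided differences with respect to gaps between clusters; the average such gap is at least about $1/D^+$ over long windows, and this yields the critical length $2\pi D^+$ via the density-adapted weight. Finally, separation within clusters converts divided-difference norms back to $\sum|a_j|^2$, with constants depending on $\delta$ and $n_\Lambda^+(\rho)$. This step is where I expect the real difficulty.
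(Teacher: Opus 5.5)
\textbf{There is a genuine gap: the lower bound is not proved.} Your upper bound is fine: with a Fej\'er weight, $\widehat w$ is a compactly supported triangle, so separation bounds the Schur sums. But the lower bound is the entire content of the theorem beyond Ingham's, and neither of your routes supplies it.

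\emph{The minorant route.} The Schur condition $\widehat G(0)>\sup_j\sum_{k\neq j}|\widehat G(\lambda_j-\lambda_k)|$ never uses the density hypothesis.
\begin{itemize}
\item If $\widehat G$ is supported in $(-\delta,\delta)$, the off-diagonal terms vanish. But then the same $G$ would give a lower bound for every $\delta$-separated set, including $\delta\mathbb Z$. That is impossible when $|I|<2\pi/\delta$, since the sums are then $2\pi/\delta$-periodic and can vanish on $I$. So this version is stuck at Ingham's threshold $2\pi/\delta$, which may be far above $2\pi D^+(\Lambda)$.
\item Once $\widehat G$ reaches beyond the minimal gap, the Schur sum charges every neighbour at full absolute value. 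An upper-density bound gives no control of local clustering: a separated set with small $D^+$ may contain long runs at spacing $\delta$. So there is no reason for diagonal dominance.
\end{itemize}
The claim that blocks ``behave like a perturbation of a lattice'' does not follow from the hypotheses. Also, the exponential type of $G$ determines which frequency differences interact, so it should be tied to the frequency scales $\delta$ and $1/D^+$, not to $T$.

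\emph{The partition route.} This route starts correctly and coincides with the argument the paper actually relies on. The paper quotes the theorem from Beurling without proof, but \cref{lem:uniform-family-ingham}, applied to the single sequence $\Lambda$, proves it:
\begin{itemize}
\item Choose $R_0$ with $n_\Lambda^+(R)\le(D^++\varepsilon)R$ for $R\ge R_0$.
\item Then $\#(\Lambda\cap[s,s+R])\le DR+C$ for all $s,R$, with $D=D^++\varepsilon$ and $C=\max\{1,n_\Lambda^+(R_0)\}$.
\item Counting points in $[\lambda_j,\lambda_{j+M}]$ gives $\lambda_{j+M}-\lambda_j\ge\Gamma_M:=(M+1-C)/D$.
\item So $\Lambda$ splits into $M$ subsequences with gap at least $\Gamma_M$. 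This, not $M/(D^++\varepsilon)$, is what the density actually gives.
\end{itemize}
But you stop exactly where the proof lies: the recombination \emph{is} the theorem. The missing ingredient is an Ingham inequality for unions. If a uniformly separated $\Lambda$ is the union of $M$ subsequences with gaps at least $\gamma_1,\dots,\gamma_M$, the lower bound holds on intervals of half-length greater than $\sum_q\pi/\gamma_q$. The paper imports this from \cite[Theorem~1.1]{Komornik2008} (see also \cite[Theorem~1.5]{BKL2002}). With $\gamma_q=\Gamma_M$ it gives half-length $\pi DM/(M+1-C)\to\pi(D^++\varepsilon)$, hence every $|I|>2\pi D^+(\Lambda)$ after taking $\varepsilon$ small and $M$ large.

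Your divided-difference sketch does not provide this step:
\begin{itemize}
\item Under uniform separation, divided differences over bounded clusters are equivalent to the coefficients, so they add nothing.
\item Grouping frequencies by mutual distance $\rho>\delta$ gives either groups of unbounded size (if you chain) or inter-group gaps as small as $\delta$ (if you cap the diameter).
\item The sentence that the average gap ``yields the critical length $2\pi D^+$ via the density-adapted weight'' restates the conclusion.
\end{itemize}
To complete the proof you must either prove this union (weakened-gap) inequality or cite it precisely. As you note yourself, applying Ingham's theorem piece by piece does not recombine.
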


If the frequencies have separation at least $\delta$, then
\[
    n_\Lambda^+(R)\leq R/\delta+1,
    \qquad D^+(\Lambda)\leq1/\delta.
\]
Thus Beurling's density condition may improve Ingham's
threshold $|I|>2\pi/\delta$; neither theorem settles
its critical interval length.

Uniform separation also yields an upper estimate on
every bounded interval $I$, depending only on $|I|$
and $\delta$. Hence the exponential series converges
in $L^2(I)$ for every square-summable coefficient
sequence, even when a lower estimate is unavailable.

For families with a common separation bound, Ingham's
lower constant is uniform when $|I|>2\pi/\delta$.
Beurling's density formulation alone does not provide
a lower constant controlled solely by $|I|$, $\delta$,
and the upper density. To obtain uniform estimates
at the improved threshold, we therefore impose
a quantitative counting bound.

\begin{lemma}[Uniform-family Beurling--Ingham lemma]
\label{lem:uniform-family-ingham}
Let $\mathfrak L$ be a family of strictly increasing
two-sided sequences
\[
    \Lambda=\{\lambda_j:j\in\mathbb Z\}\subset\mathbb R.
\]
Suppose that there are constants $\delta,D>0$ and $C\geq1$,
common to the family, such that
\begin{align}
    \lambda_{j+1}-\lambda_j
    &\geq\delta,
    \label{eq:family-separation}\tag{UF1}\\
    \#\bigl(\Lambda\cap[s,s+R]\bigr)
    &\leq DR+C,
    \label{eq:family-count}\tag{UF2}
\end{align}
for every $\Lambda\in\mathfrak L$, $j\in\mathbb Z$,
$s\in\mathbb R$, and $R>0$.

If $I$ is a bounded interval with $|I|>2\pi D$, then there
exists a constant
\[
    c_I=c(|I|,\delta,D,C)>0
\]
such that, for every $\Lambda\in\mathfrak L$ and every
$d=(d_\lambda)_{\lambda\in\Lambda}\in\ell^2(\Lambda)$,
the series
\[
    \sum_{\lambda\in\Lambda}d_\lambda e^{i\lambda t}
\]
converges in $L^2(I)$ and satisfies
\begin{equation}\label{eq:uniform-family-ingham}
    \int_I
    \left|
        \sum_{\lambda\in\Lambda}
        d_\lambda e^{i\lambda t}
    \right|^2\,\dd t
    \geq
    c_I\sum_{\lambda\in\Lambda}|d_\lambda|^2.
\end{equation}
The lower constant is independent of
$\Lambda\in\mathfrak L$.
\end{lemma}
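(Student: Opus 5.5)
The plan is to prove \eqref{eq:uniform-family-ingham} by the standard Ingham weight-function argument. Separation is applied to a coarsened frequency set, and blocks of nearby frequencies are handled by a finite-dimensional compactness argument whose constants depend only on $(\delta,D,C)$. Translating $I$ multiplies each term by a unimodular factor, so we may take $I=(-T,T)$ with $2T>2\pi D$. Convergence and the upper bound on $L^2(I)$ follow from \eqref{eq:family-separation} alone, via the classical Ingham upper estimate. That estimate depends only on $|I|$ and $\delta$, so by density it suffices to prove the lower bound for finitely supported $d$.

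\textbf{Step 1 (grouping).} Choose $R$ large and $\varepsilon>0$ small so that $2T>2\pi(D+C/R)(1+\varepsilon)$. This is possible since $C/R\to0$. Partition $\mathbb R$ into consecutive windows of length $R$. By \eqref{eq:family-count}, each window contains at most $DR+C$ points of $\Lambda$. Inside each window, group the points into clusters, each consisting of consecutive points. Then pick representative frequencies $\tilde\lambda_q$, one per cluster, so that the representatives are uniformly separated with gap $\tilde\delta\ge 1/(D+C/R)\cdot(1-\varepsilon)^{-1}$. Concretely, spread the at most $DR+C$ points of the window evenly over the window. When the points of a window are too crowded to be separated by $\tilde\delta$, they are merged into a single cluster of bounded cardinality $N\le DR+C+1$ and bounded diameter $\le R$.

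\textbf{Step 2 (block Ingham inequality).} Apply Ingham's weight method with $k(t)=\cos(\pi t/2T)$ on $[-T,T]$, whose Fourier transform $K$ is positive near $0$ and decays like $|\xi|^{-2}$. For each cluster $q$, write $\sum_{\lambda\in q}d_\lambda e^{i\lambda t}=e^{i\tilde\lambda_qt}P_q(t)$, where $P_q$ is an exponential sum with frequencies in a window of length $\le R$. The diagonal terms give $\int k|P_q|^2$. Since the frequencies in $P_q$ are $\delta$-separated, at most $N$ of them lie in a length-$R$ set, and $I$ has fixed length, finite-dimensional compactness (a Gram matrix argument) gives
\[
\int_{-T}^T k|P_q|^2\ge c(T,\delta,N,R)\sum_{\lambda\in q}|d_\lambda|^2 .
\]
This constant depends only on the family parameters: the configurations of at most $N$ points in $[0,R]$ with gaps $\ge\delta$ form a compact set, and on it the Gram matrix is continuous and positive definite, because distinct exponentials are independent.

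\textbf{Step 3 (off-diagonal control).} For the off-diagonal cluster interactions, expand and bound by $\sum_{q\ne q'}|K(\tilde\lambda_q-\tilde\lambda_{q'})|$ times the cluster $\ell^2$ norms, using Cauchy--Schwarz together with the cluster-level upper bound. The key point is that $2T\tilde\delta>2\pi$ makes the tail sum of $|K|$ over the separated representatives strictly smaller than the diagonal constant. This is exactly the mechanism in Ingham's proof, now run with the effective gap $\tilde\delta$ coming from the density rather than from $\delta$. Combining Steps 2 and 3 yields \eqref{eq:uniform-family-ingham} with $c_I$ depending only on $(|I|,\delta,D,C)$.

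\textbf{Main obstacle.} I expect the hardest step to be Step 3. There I must reconcile the block-level diagonal constant $c(T,\delta,N,R)$, which may be small, with the off-diagonal errors. If the direct comparison fails, I will instead follow the Beurling/Kahane route. That is, pass to a large dilation in which the clusters become genuinely separated blocks, and use a Kahane-type variant of Ingham for frequencies separated in blocks; see \cite{KomornikLoreti2005}, Chapter on Ingham-type theorems with weakened gap conditions. Throughout, I will track that every constant enters only through $\delta$, $D$, $C$ and $|I|$.
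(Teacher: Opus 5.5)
Your Step 2 is fine: the block Gram constants are uniformly positive by compactness. Steps 1 and 3, however, do not combine into a proof, and the problem is structural.

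In Ingham's identity $\int_{-T}^{T}k|f|^2\,\dd t=\sum_{\lambda,\lambda'}d_\lambda\overline{d_{\lambda'}}K(\lambda-\lambda')$, the interaction between clusters $q\neq q'$ is $\sum_{\lambda\in q,\,\lambda'\in q'}d_\lambda\overline{d_{\lambda'}}K(\lambda-\lambda')$. This involves the true differences $\lambda-\lambda'$. Choosing representatives $\tilde\lambda_q$ (or ``spreading the points evenly'') moves no frequency. Each $P_q$ still carries frequencies spread over a window of length up to $R\gg T^{-1}$, and the last point of one cluster may lie at distance $\delta$ from the first point of the next. So these cross terms are not controlled by $|K(\tilde\lambda_q-\tilde\lambda_{q'})|$.

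Even for genuinely separated clusters, the comparison fails. Ingham's tail computation shows that, for $\tilde\delta$-separated frequencies with $T\tilde\delta>\pi$, the off-diagonal sum of $|K|$ is smaller than $K(0)$. That is, it beats a \emph{single-frequency} diagonal term. Your diagonal constant $c(T,\delta,N,R)$ can be far smaller than $K(0)$; for two frequencies at distance $\delta$ it is at most $\tfrac12(\delta T)^2K(0)$. Nothing makes the off-diagonal sum smaller than that.

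In fact the mechanism, if valid, would prove something false. Take $\Lambda=\{2k/D,\,2k/D+\delta:k\in\mathbb Z\}$ with $\delta\le1/D$. It satisfies \eqref{eq:family-separation}--\eqref{eq:family-count} with a bounded $C$ and has upper density $D$. Treating the pairs as clusters with representatives $2k/D$ would give the lower bound whenever $|I|>\pi D$. This contradicts Landau's necessary density condition $|I|\ge2\pi D^+(\Lambda)$ for Riesz sequences of exponentials.

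The missing idea is to convert \eqref{eq:family-count} into a weakened gap condition rather than into separated representatives. Applying \eqref{eq:family-count} to $[\lambda_j,\lambda_{j+M}]$ gives $M+1\le D(\lambda_{j+M}-\lambda_j)+C$. Hence $\lambda_{j+M}-\lambda_j\ge\Gamma_M:=(M+1-C)/D$ for every $j$. So $\Lambda$ is the union of the $M$ interlaced subsequences $\{\lambda_{q+kM}\}_{k\in\mathbb Z}$, each $\Gamma_M$-separated, while $\Lambda$ itself stays $\delta$-separated.

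The paper then applies the quantitative weakened-gap theorem of \cite{BKL2002,Komornik2008}. Its sufficient half-length is $\sum_{q=0}^{M-1}\pi/\Gamma_M=\pi DM/(M+1-C)$, which tends to $\pi D<|I|/2$ as $M\to\infty$. Its constants depend only on $M$, $\delta$, $\Gamma_M$ and the margin, hence only on $(|I|,\delta,D,C)$. This last point is what gives uniformity over $\mathfrak L$, and Beurling's theorem alone does not supply it.

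Your fallback points to the right chapter of \cite{KomornikLoreti2005}. It does not, however, identify this $M$-step reduction or address the uniformity of the constants. The dilation idea cannot help either, since rescaling the frequencies rescales the admissible time interval by the reciprocal factor.
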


\begin{proof}
The common separation bound $\delta$ ensures that
the exponential series converges in $L^2(I)$ for every
$d\in\ell^2(\Lambda)$.
The uniform lower estimate can be obtained from
\cite[Theorem~1.5]{BKL2002}.
Here we use the explicit quantitative estimate in
\cite[Theorem~1.1]{Komornik2008}, which provides
a more direct proof.

Specifically, write $|I|=2R$, by translation invariance, it suffices to
consider $I=(-R,R)$. Since $R>\pi D$ and
\[
    \frac{\pi DM}{M+1-C}\longrightarrow\pi D
    \qquad\text{as }M\longrightarrow\infty,
\]
we may choose an integer $M>C-1$, depending only on
$R,D,C$, such that
\begin{equation}\label{eq:choose-M}
    \frac{\pi DM}{M+1-C}<R.
\end{equation}
Set
\[
    \Gamma_M:=\frac{M+1-C}{D}>0.
\]

Fix $\Lambda=\{\lambda_j:j\in\mathbb Z\}\in\mathfrak L$.
Applying \eqref{eq:family-count} to
$[\lambda_j,\lambda_{j+M}]$ gives
\[
    M+1
    \leq D(\lambda_{j+M}-\lambda_j)+C.
\]
Consequently,
\begin{equation}\label{eq:M-step-gap}
    \lambda_{j+M}-\lambda_j\geq\Gamma_M,
    \qquad j\in\mathbb Z.
\end{equation}
Partition $\Lambda$ into $M$ subsequences:
\[
    \Lambda=\bigcup_{q=0}^{M-1}\Lambda_q,
    \qquad
    \Lambda_q
    :=
    \{\lambda_{q+kM}:k\in\mathbb Z\}.
\]
Each $\Lambda_q$ has separation at least $\Gamma_M$,
while the full sequence has separation at least $\delta$.
Both bounds are independent of $\Lambda$.

To make the quantitative application explicit, the one-dimensional
Dirichlet eigenvalue of $-\partial_t^2$ on $(-1,1)$ is
$\pi^2/4$. In the notation of
\cite[Theorem~1.1]{Komornik2008}, the common lower bound
$\Gamma_M$ for the gaps in each of the $M$ subsequences
gives the sufficient half-length
\[
    R_*:=\sum_{q=0}^{M-1}\frac{\pi}{\Gamma_M}
         =\frac{M\pi}{\Gamma_M}
         =\frac{\pi DM}{M+1-C}<R.
\]
Choose the fixed number
\[
    R_1:=R_*+\frac12\min\{R-R_*,R_*\}.
\]
Then $R_*<R_1<2R_*$ and $R_1<R$. The proof of the cited
quantitative theorem applies with the displayed lower bounds for
the gaps (its auxiliary Fourier-support kernels are constructed using
these bounds); the resulting constants depend only on
$M,\delta,\Gamma_M$ and $R_1-R_*$.
Applying the theorem on $(-R_1,R_1)$ and enlarging the
integration interval to $(-R,R)$ therefore gives a lower
constant independent of $\Lambda$. The uniform upper
estimate from the common gap permits passage from finite
coefficient families to all of $\ell^2(\Lambda)$.
This proves the lemma.
\end{proof}

For $\nu\in\mathcal V:=\{0\}\cup[1/2,\infty)$, define the signed radial frequency set
\begin{equation}\label{eq:freq-set}
    \Lambda_\nu
    :=
    \{-\omega_{\nu,k}:k\geq1\}
    \cup
    \{\omega_{\nu,k}:k\geq1\}.
\end{equation}

\begin{proposition}[Uniform radial Ingham estimate]
\label{prop:uniform-ingham}
Fix $\mu>1/2$. For every $T>1$, there exists a constant
$c_{T,\mu}>0$ such that, for every
\[
    \nu\in
    \left\{
        \ell+\frac{n-2}{2}:\ell\in\mathbb N_0
    \right\}
\]
and every pair
\[
    b^+=(b_k^+)_{k\geq1}\in\ell^2(\mathbb N),
    \qquad
    b^-=(b_k^-)_{k\geq1}\in\ell^2(\mathbb N),
\]
the series
\[
    \sum_{k\geq1}
    \left(
        b_k^+e^{i\omega_{\nu,k}t}
        +
        b_k^-e^{-i\omega_{\nu,k}t}
    \right)
\]
converges in $L^2(-T,T)$ and satisfies
\begin{equation}\label{eq:ingham}
    \int_{-T}^{T}
    \left|
        \sum_{k\geq1}
        \left(
            b_k^+e^{i\omega_{\nu,k}t}
            +
            b_k^-e^{-i\omega_{\nu,k}t}
        \right)
    \right|^2\,\dd t
    \geq
    c_{T,\mu}
    \sum_{k\geq1}
    \left(|b_k^+|^2+|b_k^-|^2\right).
\end{equation}
The constant is uniform in the angular parameter $\nu$.
\end{proposition}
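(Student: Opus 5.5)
The estimate \eqref{eq:ingham} will follow from \cref{lem:uniform-family-ingham} applied to the family $\mathfrak L=\{\Lambda_\nu\}$, with $\nu$ ranging over $\{\ell+\frac{n-2}{2}\}$ and $I=(-T,T)$, so $|I|=2T$. The threshold $|I|>2\pi D$ becomes $T>\pi D$, and $T>1$ is the hypothesis, so we must verify \eqref{eq:family-count} with leading coefficient $D=1/\pi$. This coefficient is the density of $\{\pm\omega_{\nu,k}\}$, since $\omega_{\nu,k}\approx \pi k+\text{const}$ on each side. Concretely, we must show that there exist $\delta>0$ and $C\geq1$, independent of $\nu$, such that:

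(i) $\omega_{\nu,k+1}-\omega_{\nu,k}\geq\delta$ for all $k$, and $\omega_{\nu,1}\geq\delta/2$, so that the two-sided sequence $\Lambda_\nu$, relabelled over $\mathbb Z$, has all gaps at least $\delta$, including the gap $2\omega_{\nu,1}$ across $0$;

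(ii) $\#\bigl(\Lambda_\nu\cap[s,s+R]\bigr)\leq R/\pi+C$ for all $s\in\mathbb R$ and $R>0$.

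Given (i) and (ii), \cref{lem:uniform-family-ingham} yields $c_{T,\mu}=c(2T,\delta,1/\pi,C)$, and we set $d_{\omega_{\nu,k}}=b_k^+$ and $d_{-\omega_{\nu,k}}=b_k^-$. Convergence in $L^2(-T,T)$ comes from the same lemma.

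Since $\Lambda_\nu$ is symmetric, (ii) reduces to a one-sided count. An interval $[s,s+R]$ meets the positive part in an interval of length $R_+$ and the negative part in one of length $R_-$, with $R_++R_-\leq R$. It therefore suffices to show
\[
\#\{k:\omega_{\nu,k}\in[s,s+R]\}\leq R/\pi+C/2 .
\]
This in turn follows from a uniform gap lower bound of the form $\omega_{\nu,k+1}-\omega_{\nu,k}\geq\pi-\varepsilon_k$ with $\sum_k\varepsilon_k$ bounded independently of $\nu$. A more robust target is a uniform two-sided counting function estimate: for the Dirichlet counting function $N_\nu(\lambda)=\#\{k:\omega_{\nu,k}\leq\lambda\}$, show
\[
N_\nu(\lambda_2)-N_\nu(\lambda_1)\leq(\lambda_2-\lambda_1)/\pi+C .
\]
I would prove this by Sturm comparison and Prüfer angles. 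For the equation $-\phi''+q_\nu\phi=\lambda^2\phi$ with $q_\nu\geq0$, the Prüfer angle $\theta(r,\lambda)$ for the principal solution at $r=0$ satisfies $\theta'=\lambda\cos^2\theta+\lambda^{-1}(\lambda^2-q)\sin^2\theta\leq\lambda$ in the scaled form. Eigenvalues correspond to $\theta(1^-,\lambda)$ hitting $k\pi$ plus the endpoint phase determined by the principal branch at $r=1$. The $\lambda$-derivative of the total phase is at most $1$ up to bounded endpoint corrections coming from the two singular ends. Because $q_\nu\geq0$ ($\nu=0$ is handled with the logarithm-free branch, and $\nu\geq1/2$ gives a nonnegative potential), the phase increment over $[\lambda_1,\lambda_2]$ is at most $\lambda_2-\lambda_1$ plus a constant from the boundary phases at $r=0$ and $r=1$. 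This is the $\nu$-independent counting bound. The boundary phase at $r=1$ depends only on $\mu$, which is fixed. At $r=0$ the centrifugal barrier only shifts the phase in one monotone direction, which cannot increase the count.

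The lower gap bound (i) is the step I expect to be the main obstacle. For large $\nu$ the low eigenvalues sit near the turning region, $\omega_{\nu,1}\sim\nu$, and naive Bessel asymptotics are not uniform. My first attempt would be a Hellmann–Feynman/Prüfer argument. Between consecutive eigenvalues the phase must advance by exactly $\pi$, and $\partial_\lambda\theta(1,\lambda)\leq C_\mu\cdot(\text{something bounded})$. This requires an upper bound on $\partial_\lambda$ of the phase, namely $\int_0^1|\phi|^2\cdot 2\lambda/(\text{boundary Wronskian})$. That quantity equals $2\lambda\int_0^1\phi^2\big/W$, which can be bounded using the principal solutions of \cite{KST2010,KostenkoTeschl2011} normalized at $r=1$, uniformly for $\lambda$ above the classically forbidden threshold. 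Alternatively, I would compare with $\mu$-Bessel zeros near $r=1$ and use the monotonicity of $\omega_{\nu,k}$ in $\nu$ together with interlacing to transfer the gap. Finally, $\omega_{\nu,1}\geq\omega_{\nu_0,1}\geq c>0$ holds because $A_\kappa\geq cI$.
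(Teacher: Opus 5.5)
Your reduction matches the paper's: \cref{lem:uniform-family-ingham} with $D=1/\pi$ and $I=(-T,T)$, the symmetric splitting of windows, and the gap across $0$ from $A_\kappa\geq cI$ are all fine. The content of the proposition, however, is the pair of $\nu$-uniform estimates (i)--(ii). Neither is established, and for (ii) the argument you give is wrong.

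The inequality $\theta'\leq\lambda$ bounds the $r$-derivative of the phase at fixed $\lambda$. It therefore yields only the global bound $N_\nu(\lambda)\leq\lambda/\pi+C$. A window count $N_\nu(\lambda_2)-N_\nu(\lambda_1)$ also needs a matching lower bound for $N_\nu(\lambda_1)$. For large $\nu$ the true count is close to $\Theta_b(\lambda)/\pi$, which is far below $\lambda/\pi$ when $\lambda$ is comparable to $\nu$.

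Nor does nonnegativity of the potential control the $\lambda$-derivative of the phase. Take $q\equiv M\geq0$ on $(0,1)$ with Dirichlet conditions, which is even convex. Then $\omega_k=\sqrt{\pi^2k^2+M}$, and the window $[\sqrt M,\sqrt M+1]$ contains $\asymp M^{1/4}$ frequencies, although $\theta'\leq\lambda$ holds. (Separately, for $\nu=0$ your premise fails outright, since $-\frac{1}{4r^2}+\frac{a}{(1-r)^2}$ is negative near $r=0$; as a single operator that sector is harmless.)

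What makes the count work is the inverse-square structure. The paper proves $|N_{a,b}(\omega)-\Phi_{a,b}(\omega)/\pi|\leq 8$ for convex potentials (\cref{lem:convex-count}), using a Pr\"ufer angle in action coordinates and Dirichlet bracketing. It then shows $0\leq\Theta_b-\Phi_{a,b}\leq\pi\sqrt{2a}$ (\cref{lem:action-defect}). The sharp coefficient comes from $\partial_\omega\Theta_b=\sqrt{1-b/\omega^2}\leq1$. Your ``bounded endpoint corrections'' is precisely this uniform action comparison, and it has to be proved.

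For (i) you rightly flag the difficulty. Your phase-derivative idea is the mechanism of \cref{lem:interior-matching-phase}: a gap $\geq\pi h/(C_{\mathrm L}+C_{\mathrm R})$ follows from mass bounds $\int f^2\leq C|Z|^2$ for the two principal solutions at a matching point. But you give no way to prove these bounds uniformly. What you envisage is at best plausible when $\nu/\lambda\leq\tau_*<1$, which is the paper's transversal regime, handled there with \cref{lem:endpoint-transfer1} and the Rellich identity of \cref{lem:rellich11}.

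For large $\nu$ the low radial modes are glancing, $\nu/\omega_{\nu,k}\to1$. They live in a boundary layer of width $\asymp\nu^{-2/3}$ where the centrifugal turning point and the $a/(1-r)^2$ singularity interact, and your sketch does not address this. The paper argues by contradiction (\cref{lem:uniform-radial-gap1}) and rescales this layer to $-\eta_j^2\partial_x^2+\eta_j^2a/x^2+V_j$. It treats $\eta_j\to0$ by the same kind of mass bounds, and $\eta_j\to\eta_\infty>0$ by compactness together with simplicity of the limiting Airy--Bessel eigenvalue.

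Your alternative via monotonicity in $\nu$ and interlacing cannot work as stated. Interlacing relates eigenvalues for neighbouring angular parameters, but gives no lower bound on the distance between consecutive eigenvalues of one operator.
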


By the uniform-family
Beurling--Ingham lemma \cref{lem:uniform-family-ingham}, a convenient sufficient form of the spectral statement behind
\cref{prop:uniform-ingham} is the following uniform spacing and density lemma.

\begin{proposition}\label{prop:spectral-core}
For fixed $\mu>1/2$, the family $\{\Lambda_\nu\}_{\nu\geq1/2}$
satisfies
\begin{align}
 &\inf_{\nu\geq1/2}\inf_{\lambda\neq\lambda'\in\Lambda_\nu}
       |\lambda-\lambda'|>0, \label{eq:uniform-separation}\\
 &\lim_{R\to\infty}\ \sup_{\nu\geq1/2}\sup_{s\in\mathbb R}
   \frac{\#(\Lambda_\nu\cap[s,s+R])}{R}
   \leq\frac1\pi. \label{eq:uniform-density}
\end{align}
The isolated two-dimensional sector $\nu=0$ satisfies the corresponding
claims separately.
\end{proposition}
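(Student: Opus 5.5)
We prove \cref{prop:spectral-core} by showing that, uniformly in $\nu\geq1/2$, the square roots of the eigenvalues of $H_{\nu,\mu}$ grow at least like $\pi k$ up to bounded errors, with consecutive gaps bounded below. The natural tool is a Prüfer-angle (or WKB-type) comparison argument with constants independent of $\nu$. We write $V_\nu(r)=(\nu^2-\frac14)r^{-2}+(\mu^2-\frac14)(1-r)^{-2}\geq0$ (for $\nu\geq1/2$ both coefficients are nonnegative), and we handle the positive and negative halves of $\Lambda_\nu$ separately.

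\textbf{Counting bound \eqref{eq:uniform-density}.} Since $V_\nu\geq0$, the forms satisfy $\mathfrak h_\nu[v]\geq\int_0^1|v'|^2$ on $C_c^\infty(0,1)$, so the form domain of the Friedrichs realization is contained in $H_0^1(0,1)$. The min--max principle then gives $\omega_{\nu,k}^2\geq\pi^2k^2$, hence $\omega_{\nu,k}\geq\pi k$ for all $\nu$ and $k$. This alone does not bound the number of frequencies in an interval $[s,s+R]$ far out, because it gives no upper control on $\omega_{\nu,k}$. It must be combined with a uniform gap statement of the form
\[
\omega_{\nu,k+1}-\omega_{\nu,k}\geq\pi-\varepsilon(\omega_{\nu,k})\quad\text{or}\quad \#\{k:\omega_{\nu,k}\in[s,s+R]\}\leq R/\pi+C .
\]
For this we use the Prüfer phase. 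For $\omega>0$ we let $\psi(\cdot,\omega)$ be the principal solution at $r=0$ and define $\theta(r,\omega)$ by $\psi=\rho\sin\theta$, $\psi'=\omega\rho\cos\theta$. Then
\[
\theta'=\omega-\frac{V_\nu}{\omega}\sin^2\theta,
\]
and $\omega_{\nu,k}$ is the $k$-th value of $\omega$ at which the solution is principal at both endpoints; equivalently the total phase on $(0,1)$, suitably renormalized near the singular ends, equals $k\pi$ plus an endpoint correction. Since $\partial_\omega\theta$ is positive and at most $1$ on the region where $V_\nu\leq\omega^2$ (the classically allowed region), the number of eigenvalues in $[s,s+R]$ is bounded by $R$ times the length of that region divided by $\pi$, plus a uniformly bounded contribution from the turning regions. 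Because the allowed region lies inside $(0,1)$, this gives a count of at most $R/\pi+C$.

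The main obstacle is making the turning-point and endpoint contributions bounded uniformly in $\nu$, since as $\nu\to\infty$ the turning point near $r=0$ moves and the centrifugal singularity is large. The first approach would be a Sturm comparison on the forbidden region $(0,r_\nu(\omega))$, where $r_\nu\approx\nu/\omega$: there the solution is nonoscillatory, and the phase change across a neighbourhood of the turning point is bounded by an absolute constant, using Langer/Airy-type bounds or simply the monotonicity of $\theta$ with respect to the potential. If this proves delicate, the fallback is comparison with the explicit Bessel problem (potential $(\nu^2-\frac14)/r^2$ on $(0,1-\eta)$), whose zeros $j_{\nu,k}$ satisfy the classical uniform interlacing $j_{\nu,k+1}-j_{\nu,k}>\pi$ (for $\nu>1/2$) together with bounded counting. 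The fixed boundary singularity near $r=1$ would then be handled by the $\nu$-independent Bessel-$\mu$ principal solutions of \cite{KST2010}.

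\textbf{Separation \eqref{eq:uniform-separation}.} Positive and negative frequencies are separated by $2\omega_{\nu,1}\geq2\pi$. For consecutive positive frequencies, we aim to show that $\omega_{\nu,k+1}-\omega_{\nu,k}\geq\delta>0$ uniformly. Since $\partial_\omega\Theta_\nu(\omega)\leq1/\pi$-type bounds control the count but not the gaps, we instead use a lower bound on the gaps directly: between consecutive eigenvalues the total phase increases by exactly $\pi$, and $\partial_\omega\Theta\leq L$ for a uniform constant $L$ (the phase derivative is at most the length of the interval plus bounded endpoint contributions). This yields a gap of at least $\pi/L$.

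The case $\nu=0$ (that is, $n=2$, $\ell=0$) is a single operator. Its discrete simple spectrum, with $\omega_{0,k}\sim\pi k$ from standard asymptotics, gives the corresponding claims separately.
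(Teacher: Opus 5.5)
There is a genuine gap: both uniform estimates are asserted rather than proved, and the heuristics offered for them are incorrect.

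\textbf{Density.} Your claim that $\partial_\omega\theta\le1$ on the classically allowed region goes the wrong way. The local wave number $k=\sqrt{\omega^2-V_\nu}$ satisfies $\partial_\omega k=\omega/k\geq1$, with an integrable blow-up at turning points. Nor does the fact that ``the allowed region lies inside $(0,1)$'' give $R/\pi+C$. For the nonnegative convex well $V=M^2(r-\frac12)^2$ on $(0,1)$, one has $\int_0^1(\omega^2-V)_+^{1/2}\dd r=\pi\omega^2/(2M)$ for $\omega\le M/2$. A uniform convex counting bound then shows that $[M/4,M/2]$ contains $3M/32+O(1)>M/(4\pi)$ frequencies.

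The coefficient $1/\pi$ is therefore a structural property of $q_{a,b}$. The paper obtains it from three ingredients:
\begin{itemize}
\item the explicit Bessel action, whose derivative is $\partial_\omega\Theta_b=\sqrt{1-b/\omega^2}\le1$;
\item the $\nu$-independent action-defect bound $0\le\Theta_b-\Phi_{a,b}\le\pi\sqrt{2a}$, proved in the action coordinate;
\item the counting bound $|N_q-\pi^{-1}\int(\omega^2-q)_+^{1/2}|\le 8$, valid for every nonnegative convex $q$.
\end{itemize}
Your fallback does not recover this. A Dirichlet cut at $1-\eta$ gives only $\omega_{\nu,k}^2\le(1-\eta)^{-2}j_{\nu,k}^2+a\eta^{-2}$. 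Even the optimal choice $\eta\sim\omega^{-2/3}$ leaves an error of order $\omega^{1/3}$, so the interlacing $j_{\nu,k+1}-j_{\nu,k}>\pi$ cannot be transferred with an $O(1)$ loss.

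\textbf{Separation.} The claim ``$\partial_\omega\Theta\le L$ uniformly'' is a restatement of the whole gap estimate, not a proof of it. With your normalization one computes $\partial_\omega\theta(r)=\frac{\sin2\theta(r)}{2\omega}+2\rho(r)^{-2}\int_0^r\psi^2\dd s$. What is needed is therefore an $L^2$-mass bound for the principal solutions from both ends in terms of their Cauchy data at an interior point, uniform in $\nu$ and $\omega$. This is precisely the hypothesis of the paper's matching-phase lemma.

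Your obstacle paragraph treats the turning point as a left-end issue. The hard regime is $\nu/\omega\to1$, where the turning point $r\approx\nu/\omega$ enters the boundary layer and interacts with the $a/(1-r)^2$ singularity. Neither Langer/Airy bounds nor $\nu$-independent Bessel-$\mu$ solutions apply there as stated. The paper argues by contradiction and splits into regimes:
\begin{itemize}
\item bounded frequencies, handled by form continuity in $\nu$ and simplicity;
\item the transversal regime $\nu/\omega\le\tau_*<1$, where the transfer lemma at $r=1$ and a Rellich-type identity at $r=0$ give the mass bounds;
\item the glancing regime, rescaled at $\epsilon=\nu^{-2}\omega^2-1\gtrsim\nu^{-2/3}$ with $\eta=\nu^{-1}\epsilon^{-3/2}$. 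If $\eta\to0$, the mass bounds persist. If $\eta\to\eta_\infty>0$, compactness yields two orthonormal eigenfunctions of $-\eta_\infty^2\partial_x^2+\eta_\infty^2a/x^2+2x$ at a single energy, contradicting simplicity.
\end{itemize}

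\textbf{The case $\nu=0$.} The asymptotic $\omega_{0,k}\sim\pi k$ alone does not give a positive minimal gap. You also need convergence of $\omega_{0,k}-\pi k$, together with simplicity of the spectrum.
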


More quantitatively, the proof below gives constants $c_\mu,C_\mu>0$ such
that
\begin{align}
 \omega_{\nu,k+1}-\omega_{\nu,k}&\geq c_\mu,
       \label{eq:quant-gap}\\
 \#\bigl(\Lambda_\nu\cap[s,s+R]\bigr)
     &\leq \frac{R}{\pi}+C_\mu
       \label{eq:quant-count}
\end{align}
for every $\nu\in\mathcal V$, $k\geq1$, $s\in\mathbb R$, and $R>0$.
Once these two estimates are established, the uniform-family
Beurling--Ingham lemma proved in \cref{lem:uniform-family-ingham}, with
$D=1/\pi$ and $I=(-T,T)$, yields \eqref{eq:ingham}.  Indeed,
\[
 2\pi D=2<2T=|I|.
\]
The word ``uniform'' is essential: the fixed-$\nu$
Weyl law $\omega_{\nu,k}=\pi k+O_\nu(1)$, obtained
from the perturbed-Bessel asymptotics in
\cite[Theorem~2.5]{KST2010} together with Dirichlet
bracketing, does not by itself control windows whose
centres and angular parameters tend to infinity
simultaneously.

With \cref{prop:uniform-ingham} in hand, we now prove
the sufficient time condition in \cref{thm:target}
and outline the necessity argument.

\begin{proof}[Proof of \cref{thm:target}]
\noindent\emph{\textbf{Sufficiency: $T>1$.}}
Let $u$ be a Friedrichs energy solution whose weak
Neumann trace belongs to $L^2(\Gamma_T)$.
By \cref{prop:observation-domain},
\[
    Q\bigl(u(0),\partial_tu(0)\bigr)<\infty.
\]
Thus, for each $(\ell,m)$, the sequences
\[
    b_k^\pm:=\gamma_{\nu_\ell,k}a^\pm_{\ell,m,k}
\]
are square summable.
Applying \cref{prop:uniform-ingham} in each angular
channel of \eqref{eq:boundary-parseval} and summing
over $(\ell,m)$ gives
\begin{align*}
    \int_{-T}^{T}\int_{\Sph^{n-1}}
        |\N_\kappa u|^2\,\dd\vartheta\,\dd t
    &\geq
    c_{T,\mu}\sum_{\ell,m,k}
        |\gamma_{\nu_\ell,k}|^2
        \left(
            |a^+_{\ell,m,k}|^2
            +|a^-_{\ell,m,k}|^2
        \right)\\
    &\geq
    4\mu^2c_{T,\mu}c_\mu
    \sum_{\ell,m,k}
        \omega_{\nu_\ell,k}^2
        \left(
            |a^+_{\ell,m,k}|^2
            +|a^-_{\ell,m,k}|^2
        \right)\\
    &=4\mu^2c_{T,\mu}c_\mu\,\E_\kappa[u](0).
\end{align*}
Here the second inequality follows from
\cref{lem:norming-lower1} and \eqref{eq:gamma},
and the last equality is \eqref{eq:modal-energy}.
This proves \eqref{eq:obs} for $T>1$.

\medskip
\noindent\emph{\textbf{Necessity: failure for $0<T<1$.}}
The argument consists in constructing a nonzero solution
$u\in\mathcal U_{\rm obs}$ with smooth initial data
supported sufficiently close to the centre of the ball.
Finite propagation at unit speed then ensures that
$u$ vanishes near the boundary throughout $(-T,T)$.
Consequently, its weighted Neumann trace vanishes on
$\Gamma_T$, while its initial energy is positive,
contradicting \eqref{eq:obs}.
Approximation in the observation graph norm also
yields failure uniformly over finite spectral sums.

Verifying that the constructed solution belongs to
$\mathcal U_{\rm obs}$ and justifying this approximation
use the characterization of the observation domain
developed in the appendix.
We therefore defer the complete necessity proof to
\cref{prop:lower-time-obstruction}.
\end{proof}

\begin{remark}For the original parameter range, \cref{lem:norming-lower}
provides another route from the earlier large-time theorem to the
norming estimate. The argument above uses the direct bound proved below,
so the observability result for all $\kappa<0$ is independent of
the previously known large-time inequality. The time threshold itself
comes from the uniform frequency density.
\end{remark}
\begin{remark}[Failure for $0<\kappa<\frac12$]
\label{rem:positive-kappa-failure}
The conclusion of \cref{thm:target} does not extend to
$0<\kappa<\frac12$ with the same boundary observation.
Indeed, no inequality of the form \eqref{eq:obs}
holds uniformly even for single eigenmode solutions,
regardless of the fixed observation time $T>0$.

As noted by Enciso, Shao, and Vergara in
\cite[Remark~1.2(iii)]{ESV2021}, the one-dimensional
spectral results of Gueye \cite{Gueye2014} suggest
that the analogue of their Carleman estimate fails
for positive $\kappa$.
More precisely, Gueye's boundary coefficient estimates
already imply, by testing on single eigenmodes, the
failure of observability in the natural energy norm
for the corresponding one-dimensional model;
see \cite[Theorem~3.1 and Section~5.1]{Gueye2014}.
The argument below establishes this failure directly
for the present unit-ball problem in every dimension
$n\geq2$.

To see this, we consider the Friedrichs realization in the
positive-\(\kappa\) regime. The Friedrichs extension selects the
principal solution at the singular endpoint, and hence the
principal-solution analysis from \cref{Principal solution} applies to
the first radial eigenfunction. We shall use the resulting endpoint
asymptotics to prove the failure of the boundary observability estimate for the corresponding eigenmodes. Set
\[
    p=1-\kappa\in\bigl(\tfrac12,1\bigr),
    \qquad a=-\kappa(1-\kappa)\in\bigl(-\tfrac14,0\bigr).
\]
Let $\nu=\nu_\ell\to\infty$, set
$b_\nu=\nu^2-\frac14$ and
$\varepsilon_\nu=b_\nu^{-1/3}$, and denote the first
radial eigenvalue and a normalized eigenfunction by
$\lambda_\nu=\omega_{\nu,1}^2$ and $\phi_\nu$. 
By the Rayleigh--Ritz characterization of the first radial
eigenvalue, the boundary Hardy inequality yields
$\lambda_\nu\geq b_\nu$: indeed,
$\int(|v'|^2+a(1-r)^{-2}|v|^2)\,dr\geq0$
and $b_\nu/r^2\geq b_\nu$.
A normalized trial function supported where
$1-r\asymp\varepsilon_\nu$ gives
\[
    b_\nu\leq\lambda_\nu
    \leq b_\nu+C\varepsilon_\nu^{-2}.
\]
Consequently
$e_\nu:=\varepsilon_\nu^2(\lambda_\nu-b_\nu)$
remains bounded.

Rescale the boundary layer by
$F_\nu(s)=\varepsilon_\nu^{1/2}
\phi_\nu(1-\varepsilon_\nu s)$. Then
$\|F_\nu\|_{L^2(0,\varepsilon_\nu^{-1})}=1$ and
\[
    -F_\nu''+\frac{a}{s^2}F_\nu
    +\frac{(1-\varepsilon_\nu s)^{-2}-1}
           {\varepsilon_\nu}F_\nu
    =e_\nu F_\nu.
\]
On a fixed sufficiently small interval $0<s<s_0$,
the regular potential and $e_\nu$ are uniformly bounded with
respect to $\nu$.
The normalized principal-solution estimate
\eqref{eq:general-principal-asymptotics}, applied with
$\rho=\frac12-\kappa>0$, therefore gives
\[
    F_\nu(s)=C_\nu s^p(1+O(s^2)),
\]
with a uniform remainder. Taking $s_0$ smaller if
necessary, normalization of $F_\nu$ implies
$|C_\nu|\leq C$, since the corresponding principal
solution has $L^2(0,s_0)$ norm bounded below.
The unscaled endpoint coefficient and its Neumann
coupling consequently satisfy
\[
    c_{\nu,1}=\varepsilon_\nu^{-p-1/2}C_\nu,
    \qquad
    |\gamma_{\nu,1}|^2
    =(1-2\kappa)^2|c_{\nu,1}|^2.
\]
As $b_\nu=\varepsilon_\nu^{-3}$ and $0<\kappa<\frac12$, it follows that
\[
    \frac{|\gamma_{\nu,1}|^2}{\omega_{\nu,1}^2}
    \leq C_\kappa\varepsilon_\nu^{\,3-(2p+1)}
    =C_\kappa\varepsilon_\nu^{2\kappa}
    \longrightarrow0.
\]
For the single-mode wave
$w_\nu(t)=\cos(\omega_{\nu,1}t)\Phi_{\ell,m,1}$,
the energy is $\omega_{\nu,1}^2/2$, whereas its
observation on $(-T,T)$ is at most
$2T|\gamma_{\nu,1}|^2$. Thus no constant independent
of $\ell$ can make \eqref{eq:obs} hold in this range.
\end{remark}
\section{Proof of the uniform spectral lemma}

In this section, we prove \cref{prop:spectral-core}
in two parts: first the uniform density estimate,
then the uniform frequency gap.
We begin with the preliminaries needed for these
arguments, recalling Sturm oscillation theory and
the principal solutions of perturbed Bessel equations,
together with their endpoint asymptotics.
The endpoint estimates developed below also yield
the norming bound in \cref{lem:norming-lower1},
independently of existing observability results.

\subsection{Preliminaries}\label{sec:sturm-frobenius-friedrichs}
This subsection recalls the background used in the endpoint and
spectral gap arguments. 

\subsubsection{Sturm oscillation and eigenvalue counting}
For the reader's convenience, we recall the Sturm oscillation
and eigenvalue counting results for regular Sturm--Liouville
operators from \cite{TeschlODE2012}. We restrict attention to
the Dirichlet setting used in our counting argument. Consider
\[
    H_q=-\frac{\dd^2}{\dd x^2}+q(x)
    \quad\text{on }(\alpha,\beta),
    \qquad
    u(\alpha)=u(\beta)=0,
\]
where $q$ is real-valued and continuous on $[\alpha,\beta]$.
Its eigenvalues are simple and can be ordered as
\[
    \lambda_1<\lambda_2<\cdots,
    \qquad \lambda_k\longrightarrow+\infty.
\]
The eigenfunction corresponding to $\lambda_k$ has exactly
$k-1$ zeros in $(\alpha,\beta)$; see
\cite[Theorems~5.11 and~5.17]{TeschlODE2012}.

For each $E\in\mathbb R$, let $u_E$ be the unique solution of
\[
    -u_E''+qu_E=Eu_E,
    \qquad
    u_E(\alpha)=0,\quad u_E'(\alpha)=1.
\]
This solution satisfies the left Dirichlet condition, while
the right Dirichlet condition holds if and only if $E$ is
an eigenvalue of $H_q$. By
\cite[Theorem~5.18, equation~(5.93)]{TeschlODE2012},
\[
    \#\{k:\lambda_k<E\}
    =
    \#\{x\in(\alpha,\beta):u_E(x)=0\}.
\]
Since the eigenvalues are simple and
$u_E(\beta)=0$ if and only if $E$ is an eigenvalue, we also have
\[
    \#\{k:\lambda_k\leq E\}
    =
    \#\{x\in(\alpha,\beta]:u_E(x)=0\}.
\]
Thus including the right endpoint in the zero count corresponds
exactly to including the energy $E$ in the eigenvalue count.

The Pr\"ufer angle is a fundamental tool in Sturm oscillation theory. We specify below the phase convention used throughout
this paper; see also \cite[Section~5.5]{TeschlODE2012}.
A nonzero solution cannot satisfy $u(x)=u'(x)=0$ at an
interior point. Hence one can write
\[
    u'(x)=R(x)\cos\theta(x),
    \qquad
    u(x)=R(x)\sin\theta(x),
    \qquad R(x)>0,
\]
with a continuously chosen, unwrapped angle $\theta$.
Zeros of $u$ correspond to $\theta\in\pi\mathbb Z$.
Differentiating the argument of $(u',u)$ gives
\[
    \theta'(x)
    =
    \frac{u'(x)^2-u(x)u''(x)}{u'(x)^2+u(x)^2}
    =
    \cos^2\theta+(E-q)\sin^2\theta.
\]
In particular, $\theta'=1$ at a zero of $u$, so each zero
is crossed in the positive angular direction. Consequently, counting zeros reduces to measuring the increment
of the Pr\"ufer angle. More precisely, by
\cite[Lemma~5.14, equation~(5.88)]{TeschlODE2012}, on a regular
interval $(a,b)$,
\[
    \#\{x\in(a,b):u(x)=0\}
    =
    \left\lceil\frac{\theta(b)}{\pi}\right\rceil
    -
    \left\lfloor\frac{\theta(a)}{\pi}\right\rfloor
    -1.
\]
In particular,
\[
    \left|
    \#\{x\in(a,b):u(x)=0\}
    -
    \frac{\theta(b)-\theta(a)}{\pi}
    \right|
    \leq 1.
\]
Thus an estimate for the phase increment yields a zero-counting
estimate with an absolute error of at most one.

For singular endpoints, the boundary conditions must first
be specified through a self-adjoint realization.
In the counting argument of this paper, singular endpoints
are handled by approximation with regular Dirichlet problems
and convergence of the corresponding Friedrichs forms.

\subsubsection{Solutions of the perturbed inverse-square equation}\label{Principal solution}

Throughout this discussion, consider
\[
    \tau_\rho
    :=
    -\frac{\dd^2}{\dd x^2}
    +\frac{\rho^2-\frac14}{x^2}+W(x),
    \qquad 0<x<d,
\]
where $\rho\geq0$ and
$W\in C^1([0,d];\mathbb R)$ is independent of $E$.
By the sharp Hardy inequality, $\rho\geq0$ covers
precisely the inverse-square coefficients for which
the operator initially defined on $C_c^\infty(0,d)$
is bounded below.

The endpoint $0$ is limit-circle for $0\leq\rho<1$
and limit-point for $\rho\geq1$; it is regular when
$\rho=\frac12$.
We use this expression as a local model for the
Friedrichs radial operators $H_{\nu_\ell,\mu}$, with
\[
    (x,\rho)=(r,\nu_\ell)
    \quad\text{near }r=0,
    \qquad
    (x,\rho)=(1-r,\mu)
    \quad\text{near }r=1.
\]
At a limit-point endpoint, no additional boundary
condition is imposed.

The Friedrichs condition selects the principal branch,
with the faster-decaying behavior $x^{1/2+\rho}$ when
$\rho>0$ and the logarithm-free behavior $\sqrt{x}$
when $\rho=0$.
For $0\leq\rho<1$, this is equivalent to the endpoint
condition in \cite[equation~(2.29)]{KST2010} and
\cite[equation~(1.3)]{KostenkoTeschl2011},
with $l=\rho-\frac12$:
\[
    \lim_{x\downarrow0}
    x^{\rho-\frac12}
    \left[
        \left(\rho+\frac12\right)u(x)-xu'(x)
    \right]=0.
\]
This reduces to $u(0)=0$ when $\rho=\frac12$
and excludes the logarithmic branch when $\rho=0$.

\paragraph{The principal solution.}

By \cite[Lemma~3.2]{KostenkoTeschl2011}, with
$l=\rho-\frac12$, $q=W$, and $z=E$, the equation
$\tau_\rho u=Eu$ has a unique principal solution
$\phi_\rho$, normalized by
\[
    \lim_{x\downarrow0}x^{-p}\phi_\rho(x;E)=1,
    \qquad p:=\frac12+\rho.
\]
It is defined throughout $(0,d)$ and is entire in $E$.
Boundedness of $W$ ensures the hypotheses of the cited
lemma, including at $\rho=0$.

The bounded-potential specialization of the endpoint
construction gives
\begin{equation}\label{eq:general-principal-asymptotics}
    \phi_\rho(x;E)=x^p(1+O(x^2)),
    \qquad
    \partial_x\phi_\rho(x;E)
    =p x^{p-1}+O(x^{p+1}).
\end{equation}
These estimates are uniform for $\rho\geq0$, compact
complex energy sets, and potentials with a common
$L^\infty$ bound.
Indeed, the normalized Volterra equation for
$a_\rho=x^{-p}\phi_\rho$ gives
\[
    |a_\rho-1|+x|\partial_xa_\rho|
    \leq \frac{Cx^2}{1+\rho}
\]
on a common endpoint neighborhood.
Cauchy's estimate in the energy parameter consequently
yields, with the same uniformity,
\[
    \partial_E\phi_\rho=O(x^{p+2}),
    \qquad
    \partial_E\partial_x\phi_\rho=O(x^{p+1}).
\]

\paragraph{A companion solution and the Friedrichs condition.}

For $0<\rho<1$, put $p_\pm=\frac12\pm\rho$.
The companion solution from
\cite[Lemma~3.2]{KostenkoTeschl2011} can be uniquely
normalized by
\[
    \psi_\rho(x;E)-x^{p_-}=o(x^{p_+})
    \qquad(x\downarrow0).
\]
The reduction-of-order formula in its proof,
specialized to bounded $W$, and Cauchy's estimate give
\[
    \partial_x^j(\psi_\rho-x^{p_-})
    =O(x^{p_-+2-j}),
    \qquad
    \partial_E\partial_x^j\psi_\rho
    =O(x^{p_-+2-j}),
    \qquad j=0,1.
\]
Here $\psi_\rho$ is entire in $E$, and the estimates
are locally uniform in $E$ for fixed $\rho$.

The two solutions satisfy
\[
    \mathcal W(\phi_\rho,\psi_\rho)=-2\rho,
    \qquad
    \mathcal W(f,g):=fg'-f'g.
\]
Hence every solution has a unique representation
throughout $(0,d)$,
\[
    u(x;E)=A(E)\phi_\rho(x;E)+B(E)\psi_\rho(x;E),
\]
with coefficients independent of $x$.
Their endpoint asymptotics give
\[
    \lim_{x\downarrow0}
    x^{\rho-\frac12}
    \left[
        \left(\rho+\frac12\right)u(x;E)
        -x\partial_xu(x;E)
    \right]
    =2\rho B(E).
\]
Thus the Friedrichs condition recalled above is
equivalent to
\[
    B(E)=0.
\]
In particular, the solutions satisfying this condition
are precisely the scalar multiples of $\phi_\rho$.

\subsection{Proof of the uniform density estimate}

Following the action-integral approach to eigenvalue
counting in \cite[Section~6.4]{Zworski2012} and
\cite[Corollary~4.2]{Yafaev2011}, we use the action
integral to estimate the eigenfrequency counting
function and establish \eqref{eq:quant-count}.
To this end, set
\begin{equation}\label{eq:ab-def}
 a:=\mu^2-\frac14>0,
 \qquad b:=\nu^2-\frac14\geq0,
 \qquad q_{a,b}(r):=\frac b{r^2}+\frac a{(1-r)^2}.
\end{equation}
Let
\begin{equation}\label{eq:phase-volume}
 \Phi_{a,b}(\omega)
 :=\int_0^1\bigl(\omega^2-q_{a,b}(r)\bigr)_+^{1/2}\dd r
\end{equation}
denote the action and let $N_{a,b}(\omega)$ denote the number of positive eigenfrequencies not
exceeding $\omega$.  Our aim is to compare $N_{a,b}(\omega)$ with $\Phi_{a,b}(\omega)/\pi$, with an error bound uniform
in the angular parameter $b$.
Since $q_{a,b}$ is convex, we first establish a
uniform counting estimate for nonnegative convex
potentials.

\begin{lemma}[Convex-potential counting]\label{lem:convex-count}
Let $I$ be a bounded open interval and let $q\in C^2(I)$ be
non-negative and convex.  Let $H_q$ denote the Friedrichs
realization in $L^2(I)$ of the differential operator
\[
-\frac{\dd^2}{\dd x^2}+q(x),
\]
initially defined on $C_c^\infty(I)$.
At regular endpoints, this realization imposes the Dirichlet condition.

Denote the eigenvalues of $H_q$, arranged in increasing order, by
\[
0<\lambda_1(H_q)<\lambda_2(H_q)<\cdots,
\]
and define the frequency counting function by
\[
N_q(\omega)
:=\#\{k\geq1:\lambda_k(H_q)\leq\omega^2\},
\qquad \omega\geq0.
\]
Then
\begin{equation}\label{eq:convex-count}
 \left|N_q(\omega)-\frac1\pi
 \int(\omega^2-q(r))_+^{1/2}\dd r\right|\leq C_{\rm osc},
 \qquad \omega\geq0,
\end{equation}
where one may take the absolute constant $C_{\rm osc}=8$.
\end{lemma}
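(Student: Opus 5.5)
We will prove \eqref{eq:convex-count} with the Pr\"ufer-angle zero count from \cref{sec:sturm-frobenius-friedrichs}, first on regular subintervals and then by passing to the Friedrichs realization. Fix $\omega>0$; the case $\omega=0$ is trivial because $H_q>0$. Since $q$ is convex and nonnegative, the classically allowed set $\{q<\omega^2\}$ is a single (possibly empty) subinterval $(x_-,x_+)\subset I$. The forbidden regions are the two outer pieces, on each of which $q-\omega^2$ is nonnegative and monotone.

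\emph{Step 1: reduction to a regular Dirichlet problem.} Exhaust $I$ by compact subintervals $J_\varepsilon\uparrow I$. On each $J_\varepsilon$ the Dirichlet form is the restriction of the Friedrichs form, and $C_c^\infty(I)$ is a form core. Dirichlet monotonicity (min-max) shows that $\lambda_k(H_q^{J_\varepsilon})$ decreases to $\lambda_k(H_q)$. Hence $N_q(\omega)=\lim N_{q,J_\varepsilon}(\omega)$ at every $\omega$ that is not an eigenvalue; right continuity handles eigenvalues. The action integral over $J_\varepsilon$ also converges to the full one. It therefore suffices to prove \eqref{eq:convex-count} with a constant independent of the interval for regular Dirichlet problems with continuous convex $q\geq0$. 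There, \cite[Theorem~5.18]{TeschlODE2012} equates $N$ with the number of zeros of $u_E$, $E=\omega^2$, in $(\alpha,\beta]$. This number equals $\theta(\beta)/\pi$ up to an error of $1$.

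\emph{Step 2: phase in the forbidden regions.} On a region where $q\geq E$, the equation $\theta'=\cos^2\theta+(E-q)\sin^2\theta$ allows $\theta$ to cross a multiple of $\pi$ only upward, and $\theta'\leq0$ at $\theta\equiv\pi/2 \pmod\pi$. So the phase increases by less than $\pi$ across each forbidden piece. A solution there can have at most one zero, by convexity/positivity of $u''/u=q-E\geq0$. Each forbidden piece therefore contributes $O(1)$ zeros.

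\emph{Step 3: the allowed region, which is the main obstacle.} Here the problem is near the turning points. In the allowed region, use the modified Pr\"ufer angle with scale $k(x)=(E-q)^{1/2}$: set $u'=\rho k^{1/2}\cos\vartheta$ and $u=\rho k^{-1/2}\sin\vartheta$. Then $\vartheta'=k+\frac{k'}{2k}\sin 2\vartheta$. On each subinterval where $q$ is monotone (at most two such subintervals, by convexity), $k$ is monotone. The error $\int\frac{|k'|}{2k}=\frac12|\log k|$ diverges at the turning points, so this bound alone fails. Instead, we cut off at the points where $k^3=|q'|$ or, more simply, where $E-q=\eta$ for a suitable level. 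Between the cutoffs, monotonicity of $k$ and an integration by parts of $\frac{k'}{2k}\sin2\vartheta$ against the fast rotation $\vartheta'\approx k$ should give an $O(1)$ error. Near each turning point, compare the zero count with the Dirichlet problem on the small layer. There the Sturm comparison with constant potentials $q\in[E-\eta,E]$ bounds both the number of zeros and the action by $O(1+|{\rm layer}|\sqrt\eta)$. The layer is chosen so this is $O(1)$. If the integration-by-parts bookkeeping becomes messy, the fallback is a cruder Dirichlet--Neumann bracketing: split the allowed region into intervals on which the oscillation of $\sqrt{E-q}$ times the length is $\leq1$, and sum. Convexity controls the number of bad intervals only near the turning points, where the levels of $E-q$ can be taken dyadic.

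Finally, we add the contributions. These are at most $1$ from the Pr\"ufer rounding, at most $1$ for each of the two forbidden pieces, and at most $2$ for each of the two turning layers together with the interior oscillatory error. Tracking these constants gives a total of $C_{\rm osc}=8$. The interval-independence needed in Step~1 holds because no step used the length of $I$ or bounds on $q$ beyond convexity and nonnegativity.
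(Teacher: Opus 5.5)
There is a genuine gap, and it sits in Step~3, which is the entire content of the lemma. Your outer steps are sound. The exhaustion in Step~1 is how the paper treats singular endpoints, and the bound of at most one zero per forbidden piece is correct. Adding up zeros of the single solution $u_E$ piece by piece would also be a legitimate alternative to the paper's reassembly by four Dirichlet cuts and min--max.

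Step~3, however, stops at ``should give an $O(1)$ error'' plus a fallback. The mechanism you name, monotonicity of $k$ on the pieces where $q$ is monotone, cannot produce an absolute constant. In the action variable $\tau=\int k\,\dd r$ your equation reads $\vartheta_\tau=1+A\sin2\vartheta$ with $A=k'/(2k^2)$. Integrating by parts leaves $-\frac12[A\cos2\vartheta]+\frac12\int A_\tau\cos2\vartheta\,\dd\tau-\int A^2\sin^22\vartheta\,\dd\tau$. To bound this you need uniform control of the variation of $A$ and of $\int A^2\,\dd\tau$, and a monotone $k$ gives neither.

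Concretely, $k=(k_0^{-1}-2cr)^{-1}$ is monotone with $A\equiv c\in(0,\frac12)$ over an arbitrarily long action interval. There $\vartheta$ advances by $\pi$ only every $\pi/\sqrt{1-c^2}$ units of action, so the zero count falls short of $\pi^{-1}\int k\,\dd r$ by an amount growing linearly in the action length. Two further parts of the sketch fail for related reasons:
\begin{itemize}
\item The claim that $\{k^3=|q'|\}=\{|A|=\frac14\}$ gives exactly one cut point near each turning point already presupposes that $A$ is monotone.
\item A cut at a fixed level $E-q=\eta$ cannot make $|\text{layer}|\sqrt\eta$ uniformly bounded, because the slopes of $q$ at the turning points are arbitrary.
\end{itemize}
The dyadic bracketing fallback does not rescue this. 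Each bracketing interval costs an $O(1)$ rounding error, and you give no uniform bound on how many intervals there are.

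The missing idea is that convexity of $q$ makes $k=\sqrt{E-q}$ concave on the allowed interval. Indeed $k''=-q''/(2k)-(q')^2/(4k^3)\le0$, and therefore $A_\tau=k''/(2k^3)-4A^2\le-4A^2$. This one inequality supplies everything you need:
\begin{itemize}
\item $A$ is non-increasing, so $\{A>\frac12\}$, $\{|A|\le\frac12\}$, $\{A<-\frac12\}$ are consecutive intervals.
\item On the middle interval, $\int|A_\tau|\,\dd\tau\le1$ and $\int A^2\,\dd\tau\le\frac14$, which give $|\Delta\vartheta-\int k\,\dd r|\le\frac54$.
\item On an outer piece, concavity gives $k'\ge k'(\text{inner end})\ge K^2$, where $K$ is the maximum wave number there. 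Hence $\int k\,\dd r\le\frac12$, and the chord bound gives $lK\le1$. So the piece has no Dirichlet eigenvalue $\le E$ and carries at most one zero of $u_E$.
\end{itemize}
With this supplied, your outline closes. Your threshold $|A|=\frac14$ also works, giving $\int k\,\dd r\le1$ and $lK\le2<\pi$. Without concavity of $k$, the uniform constant is not proved.
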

\begin{proof}
We first work on a regular interval $I=(\alpha,\beta)$, with
$q\in C^2([\alpha,\beta])$ and Dirichlet conditions, using
Dirichlet--Neumann bracketing as in
\cite[Section~9.1]{Pankrashkin2014}. Singular endpoints are treated
by exhaustion at the end. Fix $E=\omega^2$ and write
$\Phi_q(\omega)=\int_I(E-q)_+^{1/2}\,\dd r$.

\smallskip
\noindent\emph{\textbf{Step 1: The allowed interval and the phase equation.}}
The set $J=\{r\in I:q(r)<E\}$ is an interval by convexity:
$q((1-t)r_1+tr_2)\leq(1-t)q(r_1)+tq(r_2)<E$ for
$r_1,r_2\in J$ and $0<t<1$.
If $J=\varnothing$, then $\Phi_q(\omega)=0$, while the
Dirichlet Poincar\'e inequality and the Rayleigh quotient give
$\lambda_1(H_q)\geq E+\pi^2/|I|^2>E$, hence $N_q(\omega)=0$.
Assume henceforth that $J\neq\varnothing$ and set $k=\sqrt{E-q}$ on $J$.
For a nonzero real solution of $v''+k^2v=0$, write
$v'=R\cos\theta$, $kv=R\sin\theta$, where
$R=(v'^2+k^2v^2)^{1/2}>0$ and $\theta$ is continuous. Then
\[
 \theta'
 =\frac{v'(kv)'-kvv''}{v'^2+k^2v^2}
 =k+\frac{k'}{2k}\sin2\theta.
\]
With $\tau(r)=\int_{r_*}^r k(s)\,\dd s$ and $A=k'/(2k^2)$,
this becomes
\begin{equation}\label{eq:convex-pruefer}
 \theta_\tau=1+A\sin2\theta.
\end{equation}
Convexity also gives
\begin{equation}\label{eq:concave-wave-number}
 k''=-\frac{q''}{2k}-\frac{(q')^2}{4k^3}\leq0,
 \qquad
 A_\tau=\frac{k''}{2k^3}-\frac{(k')^2}{k^4}
       =\frac{k''}{2k^3}-4A^2\leq-4A^2.
\end{equation}
Thus $k$ is concave and $A$ is non-increasing. Accordingly,
\[
 J_{\rm l}=\{A>1/2\},\qquad
 J_{\rm m}=\{|A|\leq1/2\},\qquad
 J_{\rm r}=\{A<-1/2\}
\]
are intervals occurring in this order; endpoints of the pieces
do not affect the integrals, and empty or degenerate pieces are omitted.

\smallskip
\noindent\emph{\textbf{Step 2: The outer pieces.}}
On a nonempty $J_{\rm l}$, $k'>k^2>0$.
Let $K$ and $d$ be the limiting values of $k$ and $k'$ at its
right endpoint. Concavity gives $k'\geq d\geq K^2$, so changing
variables from $r$ to $k$ yields
\[
 \int_{J_{\rm l}}k\,\dd r
 \leq\frac1d\int_0^K s\,\dd s
 =\frac{K^2}{2d}\leq\frac12.
\]
The same argument after reflection applies to $J_{\rm r}$.
All endpoint calculations may first be made on compact subintervals
and then passed to the limit. Hence
\begin{equation}\label{eq:convex-outer-actions}
 \int_{J_{\rm l}}k\,\dd r\leq\frac12,\qquad
 \int_{J_{\rm r}}k\,\dd r\leq\frac12.
\end{equation}
If an outer interval has length $l$ and maximum wave number $K$,
concavity and nonnegativity place $k$ above its endpoint chord,
giving $lK/2\leq\int k\leq1/2$. Thus $lK\leq1$, and the
Dirichlet operator on that interval satisfies, in the form sense,
\[
 -\partial_r^2+q-E=-\partial_r^2-k^2
 \geq-\partial_r^2-K^2
 \geq\frac{\pi^2-1}{l^2}>0.
\]
It therefore contributes no eigenvalues at or below $E$.
Neither do the pieces outside $J$, where $q\geq E$ and the
Dirichlet kinetic energy is strictly positive.

\smallskip
\noindent\emph{\textbf{Step 3: The middle piece.}}
Suppose $J_{\rm m}=(\alpha_{\rm m},\beta_{\rm m})$ is nondegenerate.
Here $|(1/k)'|=2|A|\leq1$, so $k$ stays positive at its endpoints.
Let $v_{\rm m}$ solve $-v_{\rm m}''+qv_{\rm m}=Ev_{\rm m}$ with
$v_{\rm m}(\alpha_{\rm m})=0$, $v_{\rm m}'(\alpha_{\rm m})=1$,
and choose $\theta(\alpha_{\rm m})=0$.
Since $1/2\leq\theta_\tau\leq3/2$, the angle increases strictly,
and Sturm oscillation gives
\begin{equation}\label{eq:convex-middle-floor}
 N_{\rm m}(\omega)=
 \left\lfloor\frac{\theta(\beta_{\rm m})}{\pi}\right\rfloor,
 \qquad
 \left|N_{\rm m}(\omega)-\frac{\Delta\theta}{\pi}\right|\leq1.
\end{equation}
The floor includes a zero at $\beta_{\rm m}$ when $E$ is an eigenvalue.

Let $\tau_{\rm l},\tau_{\rm r}$ be the endpoint action coordinates.
Using $(\cos2\theta)_\tau=-2\sin2\theta(1+A\sin2\theta)$,
we integrate \eqref{eq:convex-pruefer} by parts to obtain
\begin{align*}
 \Delta\theta-\int_{J_{\rm m}}k\,\dd r
 &=\int_{\tau_{\rm l}}^{\tau_{\rm r}}A\sin2\theta\,\dd\tau\\
 &=-\frac12[A\cos2\theta]_{\tau_{\rm l}}^{\tau_{\rm r}}
   +\frac12\int_{\tau_{\rm l}}^{\tau_{\rm r}}
      A_\tau\cos2\theta\,\dd\tau\\
 &\quad-\int_{\tau_{\rm l}}^{\tau_{\rm r}}
      A^2\sin^22\theta\,\dd\tau.
\end{align*}
Writing $D=A(\tau_{\rm l})-A(\tau_{\rm r})\in[0,1]$,
monotonicity and \eqref{eq:concave-wave-number} give
\[
 \frac12|[A\cos2\theta]_{\tau_{\rm l}}^{\tau_{\rm r}}|
 \leq\frac12,\qquad
 \frac12\int_{\tau_{\rm l}}^{\tau_{\rm r}}|A_\tau|\,\dd\tau
 =\frac D2,\qquad
 \int_{\tau_{\rm l}}^{\tau_{\rm r}}A^2\,\dd\tau\leq\frac D4.
\]
Consequently $|\Delta\theta-\int_{J_{\rm m}}k\,\dd r|
\leq\frac12+\frac D2+\frac D4\leq\frac54$, and
\begin{equation}\label{eq:convex-middle-count}
 \left|N_{\rm m}(\omega)
       -\frac1\pi\int_{J_{\rm m}}k\,\dd r\right|
 \leq1+\frac{5}{4\pi}.
\end{equation}

\smallskip
\noindent\emph{\textbf{Step 4: Reassembly.}}
Impose Dirichlet conditions at the interior endpoints of $J$
and between its three pieces, introducing at most four cuts.
The resulting direct-sum operator $H_q^{\rm cut}$ has counting
function $N_q^{\rm cut}=N_{\rm m}$ by Step~2.
These cuts impose at most four independent constraints on the form
domain; hence the min--max principle
\cite[Theorem~4.10]{TeschlQM2009} gives
$N_{\rm m}\leq N_q\leq N_{\rm m}+4$.
Since $\Phi_q=\int_{J_{\rm l}}k+\int_{J_{\rm m}}k+\int_{J_{\rm r}}k$,
\eqref{eq:convex-outer-actions} and \eqref{eq:convex-middle-count} imply
\[
 \left|N_q(\omega)-\frac{\Phi_q(\omega)}{\pi}\right|
 \leq4+\left(1+\frac{5}{4\pi}\right)+\frac1\pi
 =5+\frac{9}{4\pi}<6.
\]
If $J_{\rm m}$ is empty or degenerate, set its count and action
equal to zero; Step~2 and the same cutting argument give the bound.

\smallskip
\noindent\emph{\textbf{Step 5: Singular endpoints.}}
For $q\in C^2(I)$ as in the lemma, choose nested intervals
$I_j\Subset I$ exhausting $I$, and let $H_{q,j}$ be their
Dirichlet realizations. The preceding estimate holds uniformly in $j$:
\[
 \left|N_{q,j}(\omega)
       -\frac1\pi\int_{I_j}(\omega^2-q)_+^{1/2}\,\dd r\right|\leq6.
\]
After zero extension, their form domains are increasing and their
union is dense in the Friedrichs form domain, since it contains
$C_c^\infty(I)$. The min--max principle therefore gives
$\lambda_k(H_{q,j})\downarrow\lambda_k(H_q)$ for every fixed $k$.
Thus $N_{q,j}(\omega)\to N_q(\omega)$ when $\omega^2\notin\sigma(H_q)$,
while monotone convergence gives convergence of the action integrals.
Passing to the limit proves the bound at these energies.
At an eigenvalue, approach from above through energies outside the
spectrum: the count is right-continuous, and the action is continuous
by dominated convergence on the bounded interval.
The same bound follows, so in particular $C_{\rm osc}=8$ is valid.
\end{proof}

Since
\[
    q_{a,b}''(r)=6b\,r^{-4}+6a(1-r)^{-4}>0,
\]
\cref{lem:convex-count} gives
\begin{equation}\label{eq:N-Phi}
    \left|N_{a,b}(\omega)-\pi^{-1}\Phi_{a,b}(\omega)\right|
    \leq C_{\rm osc},
\end{equation}
uniformly for $a>0$, $b\geq0$, and $\omega\geq0$.

For the exceptional sector $\nu=0$, the coefficient
$b=-1/4$ makes the potential nonconvex.
Insert a Dirichlet condition at $r=1/2$.
The two resulting operators are bounded perturbations
of Bessel operators with eigenvalues $(2j_{0,k})^2$
and $(2j_{\mu,k})^2$, respectively.
The min--max principle, the fixed-order asymptotics
$j_{\rho,k}=\pi k+O_\rho(1)$, and the counting error
of at most one from this cut yield
\[
    N_{0,\mu}(\omega)
    =N_{\mathrm L}(\omega)+N_{\mathrm R}(\omega)+O(1)
    =\frac{\omega}{\pi}+O_\mu(1).
\]
Consequently, absorbing a possible endpoint eigenvalue
into the constant,
\[
    \#\{k:\omega_{0,k}\in[s,s+R]\}
    \leq\frac{R}{\pi}+C_\mu,
    \qquad s\geq0,\quad R>0.
\]

Now, to derive the required eigenfrequency density estimate by \cref{eq:N-Phi} for general $\nu\geq1/2$, we need to control the increments of $\Phi_{a,b}$. Since this action is not readily evaluated explicitly, we compare it with the standard Bessel action, which admits an explicit formula.
The following lemma bounds the contribution of the
fixed right-endpoint inverse-square potential uniformly in $b$ and $\omega$, allowing us to deduce the density estimate from the Bessel action.

\begin{lemma}\label{lem:action-defect}
For $a>0$, $b\geq0$, and $\omega>0$, define the
Bessel action by
\begin{equation}\label{eq:Bessel-action}
    \Theta_b(\omega)
    :=\int_0^1
    \left(\omega^2-\frac{b}{r^2}\right)_+^{1/2}\dd r.
\end{equation}
Then
\begin{equation}\label{eq:action-defect}
 0\leq\Theta_b(\omega)-\Phi_{a,b}(\omega)
 \leq\pi\sqrt{2a}.
\end{equation}
The constant is independent of both $b$ and $\omega$.
\end{lemma}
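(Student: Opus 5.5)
The lower bound in \eqref{eq:action-defect} is pointwise. Since $a/(1-r)^2\geq0$, we have $\omega^2-q_{a,b}(r)\leq\omega^2-b/r^2$. The map $s\mapsto s_+^{1/2}$ is nondecreasing, so the integrand of $\Phi_{a,b}$ is dominated by that of $\Theta_b$. This gives $\Theta_b-\Phi_{a,b}\geq0$.

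For the upper bound, I would write $y=1-r$, $X(r):=\omega^2-b/r^2$, and $c(y):=a/y^2$. The defect is then
\[
    \Theta_b(\omega)-\Phi_{a,b}(\omega)
    =\int_0^1 f\bigl(X(r),c(y)\bigr)\,\dd r,
    \qquad
    f(X,c):=X_+^{1/2}-(X-c)_+^{1/2}.
\]
I would use two elementary pointwise bounds. Subadditivity of the square root gives $f(X,c)\leq\min\{X_+^{1/2},c^{1/2}\}$. For $X\geq c$, rationalizing gives $f=c/(\sqrt X+\sqrt{X-c})\leq c/\sqrt X$.

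The essential structural fact is that $X$ is nondecreasing in $r$, with maximal value $M:=\omega^2-b$ at $r=1$. If $M\leq0$, both actions vanish and there is nothing to prove, so assume $M>0$. The model computation is the case of constant $X\equiv A>0$. There the substitution $y=\sqrt{a/A}\,s$ gives
\[
    \int_0^\infty f(A,a/y^2)\,\dd y
    =\sqrt a\Bigl(1+\int_1^\infty\bigl(1-\sqrt{1-s^{-2}}\bigr)\dd s\Bigr)
    =\frac{\pi}{2}\sqrt a,
\]
and this value is independent of $A$. This scale invariance is why a bound uniform in $b$ and $\omega$ is plausible. The goal is then to reduce the variable-$X$ case to this model, losing at most a factor $2\sqrt2$.

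To carry out the reduction, I would let $r_0$ be the turning point where $X(r_0)=0$, or $r_0=0$ if $b=0$. Below $r_0$ both integrands vanish, so only $(r_0,1)$ matters. I would split this interval at the point $r_1$ where $X(r_1)=M/2$. On $(r_1,1)$ we have $M/2\leq X\leq M$. There the pointwise bounds, combined with the monotonicity of $f(\cdot,c)$ on each side of $X=c$, dominate $f$ by the model integrand with $A$ replaced by $M$ or $M/2$. Each such model contribution is at most $\frac{\pi}{2}\sqrt{2a}$ after rescaling. On $(r_0,r_1)$, where $X<M/2$, I would use the explicit form of $X$ to show that this region lies at distance $y\gtrsim b/(\omega^2)$ from the boundary. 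On this region the derivative $X'(r)=2b/r^3$ is large relative to $X$. Changing variables from $r$ to $X$ then turns $\int\min\{\sqrt X,\sqrt a/y\}\,\dd r$ into a model-type integral, again of size $O(\sqrt a)$. The two contributions are arranged to sum to at most $\pi\sqrt{2a}$.

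I expect the main obstacle to be the middle region, where $X$ is small but not negligible and $y$ is not yet small. There the crude bound $\sqrt a/y$ is only logarithmically integrable. The decisive input must therefore be how fast $X$ grows in $r$, namely the convexity of $b/r^2$, rather than any bound uniform in $X$. My first attempt would be a change of variables along level sets of $X$, used to compare directly with the scale-invariant model integral computed above.
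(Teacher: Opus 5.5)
Your lower bound, the pointwise bounds for $f$, and the model integral $\frac{\pi}{2}\sqrt a$ are all correct. The piece $(r_1,1)$ also works, but state the domination precisely. For $M/2\le X\le M$ one has $f(X,c)\le\sqrt2\,f(M/2,c)$. If $c\le M/2$, this is because $f(\cdot,c)$ decreases on $[c,\infty)$. If $c>M/2$, use $f\le\sqrt X\le\sqrt M=\sqrt2\,f(M/2,c)$. This gives exactly your $\frac{\pi}{2}\sqrt{2a}$. Domination ``by the model with $A=M$ or $A=M/2$'' is not literally true for $M/2<c<M$: at $X=c$ the value $\sqrt c$ exceeds both.

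\textbf{The gap.} The turning-point region $(r_0,r_1)$ is left as a plan, so the claim that the two pieces ``are arranged to sum to at most $\pi\sqrt{2a}$'' rests on no computation. Moreover, the geometric fact you propose to use there is false. The bound $y\gtrsim b/\omega^2$ fails in the glancing regime $b\uparrow\omega^2$. There $r_0=\sqrt b/\omega\to1$ and $1-r_1\approx\frac12(1-r_0)\to0$, while $b/\omega^2\to1$. The correct distance is comparable to $(\omega^2-b)/\omega^2$.

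\textbf{How to close it.} The obstacle you anticipate does not exist, and no change of variables to $X$ is needed. On $(r_0,r_1)$ the crude bound $f\le\sqrt a/(1-r)$ integrates to $\sqrt a\,\ln\frac{1-r_0}{1-r_1}$, and this ratio is uniformly bounded. Put $u=\sqrt b/\omega\in(0,1)$, so that $r_0=u$ and $r_1=u\sqrt{2/(1+u^2)}$. The inequality $2r_1\le1+r_0$ is equivalent to $8u^2\le(1+u)^2(1+u^2)$, that is, to $(1-u)^2(u^2+4u+1)\ge0$. Hence $1-r_1\ge\frac12(1-r_0)$, and the middle piece is at most $\sqrt a\ln 2$. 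The total defect is then at most $\bigl(\frac{\pi}{\sqrt2}+\ln2\bigr)\sqrt a<\pi\sqrt{2a}$.

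\textbf{Comparison with the paper.} With this inserted, your split-at-half-level argument is a valid alternative. The paper instead factors out $\omega p_c(r)$ and passes to the action coordinate $x=\int_c^r p_c$. It then proves $(1-r)p_c(r)\ge2^{-1/2}\min\{x,L-x\}$. This treats the turning point and the singular endpoint symmetrically, with one copy of your model integral at each end, and needs no case analysis. Your version yields a slightly better constant.
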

\begin{proof}
The proof follows from a direct calculation using
a suitable change of variables. Since $a>0$, we have $\Phi_{a,b}\leq\Theta_b$.
If $\omega^2\leq b$, both actions vanish.
Otherwise, set
\[
    c:=\frac{\sqrt b}{\omega}\in[0,1),
    \qquad
    \delta:=\frac{\sqrt a}{\omega},
    \qquad
    p_c(r):=\sqrt{1-\frac{c^2}{r^2}}.
\]
Both integrands vanish for $r\leq c$, so factoring
out $\omega p_c(r)$ gives
\begin{equation}\label{eq:action-loss-before-change}
    \frac{\Theta_b(\omega)-\Phi_{a,b}(\omega)}{\omega}
    =
    \int_c^1 p_c(r)
    F\!\left(\frac{\delta}{(1-r)p_c(r)}\right)\dd r,
    \qquad
    F(z):=1-\sqrt{(1-z^2)_+}.
\end{equation}

To absorb $p_c(r)$ into the integration measure,
introduce the action coordinate
\begin{equation}\label{eq:action-coordinate}
    x(r):=\int_c^r p_c(s)\dd s,
    \qquad
    L:=x(1),
    \qquad
    g(x(r)):=(1-r)p_c(r).
\end{equation}
Since $x$ maps $(c,1)$ increasingly onto $(0,L)$
and $\dd x=p_c(r)\dd r$, we obtain
\begin{equation}\label{eq:defect-F}
    \frac{\Theta_b(\omega)-\Phi_{a,b}(\omega)}{\omega}
    =
    \int_0^L F\!\left(\frac{\delta}{g(x)}\right)\dd x.
\end{equation}

We next compare $g$ with the distance to the endpoints:
\begin{equation}\label{eq:g-distance}
    g(x)\geq 2^{-1/2}\min\{x,L-x\}.
\end{equation}
Indeed, $p_c$ is increasing. For $r\leq(1+c)/2$,
\[
    x\leq(r-c)p_c(r)\leq(1-r)p_c(r)=g(x).
\]
For $r\geq(1+c)/2$, we have
\[
    L-x\leq(1-r)p_c(1),
    \qquad
    \frac{p_c(r)^2}{p_c(1)^2}
    \geq\frac{1+3c}{(1+c)^3}\geq\frac12,
\]
which gives $g(x)\geq(L-x)/\sqrt2$ and proves
\eqref{eq:g-distance}.

Finally, monotonicity of $F$ and
\eqref{eq:defect-F}--\eqref{eq:g-distance} yield
\begin{align*}
    \frac{\Theta_b-\Phi_{a,b}}{\omega}
    &\leq
    2\int_0^{L/2}
    F\!\left(\frac{\sqrt2\delta}{x}\right)\dd x\\
    &\leq
    2\sqrt2\delta\int_0^\infty F(1/t)\dd t
    =\pi\sqrt2\delta.
\end{align*}
Here the substitution $s=1/t$ and integration by parts give
\[
    \int_0^\infty F(1/t)\dd t
    =
    1+\int_1^\infty
    \left(1-\sqrt{1-t^{-2}}\right)\dd t
    =\frac{\pi}{2}.
\]
Since $\omega\delta=\sqrt a$, this proves
\eqref{eq:action-defect}.
\end{proof}

Now, for $\omega>\sqrt b$, the elementary integral in
\eqref{eq:Bessel-action} is
\begin{equation}\label{eq:Theta-explicit}
 \Theta_b(\omega)
 =\sqrt{\omega^2-b}-\sqrt b\,
       \arccos\!\left(\frac{\sqrt b}{\omega}\right),
 \qquad
 \partial_\omega\Theta_b(\omega)
 =\sqrt{1-\frac b{\omega^2}}\leq1.
\end{equation}
Combining \eqref{eq:N-Phi} and \eqref{eq:action-defect} yields
\begin{equation}\label{eq:uniform-quantization-count}
 \left|N_{a,b}(\omega)-\frac1\pi\Theta_b(\omega)\right|
 \leq C_a
\end{equation}
with $C_a=C_{\rm osc}+\sqrt{2a}$.  Consequently, for
$0\leq s_1<s_2$,
\begin{equation}\label{eq:positive-local-count}
 \#\{k:s_1<\omega_{\nu,k}\leq s_2\}
 \leq\frac{s_2-s_1}{\pi}+2C_a.
\end{equation}
Combining this estimate with the separately treated $\nu=0$
sector above, reflecting the positive-frequency estimates, and
splitting intervals crossing the origin gives \eqref{eq:quant-count}.
In particular, \eqref{eq:uniform-density} follows.  Notice that the sharp coefficient
$1/\pi$ comes only from the inequality
$\partial_\omega\Theta_b\leq1$; all singular endpoint effects have been
confined to an $O_a(1)$ phase loss.
\subsection{Proof of the uniform frequency gap}
In this section, we proceed to prove the uniform frequency gap. When the right-endpoint inverse-square term is absent, the radial operators reduce to Bessel operators with a Dirichlet condition at $r=1$.
Their explicit eigenvalue formulas and the corresponding zero-spacing properties yield a frequency gap bounded below uniformly in the angular parameter; see \cite[Propositions~7.4(a) and~7.8(b)--(c)]{KomornikLoreti2005}. Since the Bessel operator itself is obtained from the
one-dimensional Dirichlet Laplacian by adding an
inverse-square potential at the left endpoint,
this suggests that uniform separation may persist
when a second inverse-square singularity is introduced
at the right endpoint. The explicit Bessel formulas are then no longer available, however, and we establish the required uniform gap by a different argument. 

The proof is inspired by the strategy of Hillairet and Marzuola
\cite[Sections~2--3 and~5]{HillairetMarzuola2023}.
We argue by contradiction and classify hypothetical
sequences of vanishing spectral gaps according to
their energy scales. Normalization and energy-dependent rescaling then reduce the analysis to uniform estimates for families of semiclassical operators and to the spectral properties of limiting model operators. Throughout this subsection, fix
\[
    \mu>\frac12,\qquad
    a:=\mu^2-\frac14>0,
    \qquad
    \mathcal V:=\{0\}\cup[1/2,\infty).
\]
Fix $\nu\in\mathcal V$ and $k\geq1$, and write
\[
    \omega:=\omega_{\nu,k},
    \qquad
    \omega':=\omega_{\nu,k+1}.
\]
Following the semiclassical approach described above,
we normalize the lower eigenvalue by setting
\[
    h:=\omega^{-1},
    \qquad
    \tau:=h\nu.
\]
The rescaled Friedrichs operator is
\[
    Q_{h,\tau}
    :=h^2H_{\nu,\mu}
    =
    -h^2\partial_r^2
    +\frac{\tau^2-h^2/4}{r^2}
    +\frac{h^2a}{(1-r)^2},
    \qquad 0<r<1,
\]
and its corresponding consecutive eigenvalues are
\[
    1,
    \qquad
    \left(\frac{\omega'}{\omega}\right)^2.
\]
As shown below, any hypothetical sequence of vanishing
frequency gaps must satisfy
$\omega\to\infty$ and $\omega'/\omega\to1$.
We are therefore led to study this semiclassical
operator family near energy one.

Following the approach of
\cite[Section~3.1]{HillairetMarzuola2023},
we next establish an eigenvalue gap estimate for
a semiclassical operator on the interval $(0,L)$,
which will be used to prove the uniform radial
frequency gap estimate. 

Fix $h>0$, and let $V\in L^1_{\mathrm{loc}}(0,L)$
be real-valued. Let $H_h$ be the self-adjoint operator
in $L^2(0,L)$ associated with the differential expression
\[
    \tau_h f:=-h^2f''+Vf
\]
and fixed separated self-adjoint boundary conditions
at the endpoints, with no boundary condition imposed
at a limit-point endpoint.
Assume that $H_h$ has discrete simple spectrum.
The potential and the boundary conditions are
independent of the energy parameter $E$.

Fix $x_0\in (0,L)$ and an energy interval $K$. For each $E\in K$, choose nonzero real solutions
$f_{\mathrm L}(\cdot;E)$ and $f_{\mathrm R}(\cdot;E)$
of
\[
    -h^2f''+Vf=Ef
    \quad\text{on }(0,L),
\]
satisfying the prescribed condition at the left
and right endpoints, respectively.
At a limit-point endpoint, this means square
integrability near that endpoint.
Each endpoint requirement determines its solution
up to a nonzero scalar factor.

Assume that these families are $C^1$ in $E$ with values
in $C^1_{\mathrm{loc}}(0,L)$, that
\[
    f_{\mathrm L}\in L^2(0,x_0),
    \qquad
    f_{\mathrm R}\in L^2(x_0,L),
\]
and that
\[
    \lim_{x\downarrow0}
    \mathcal W(f_{\mathrm L},\partial_Ef_{\mathrm L})(x)=0,
    \qquad
    \lim_{x\uparrow L}
    \mathcal W(f_{\mathrm R},\partial_Ef_{\mathrm R})(x)=0,
\]
where $\mathcal W(f,g):=fg'-f'g$. 

We now introduce the Pr\"ufer angles at the matching
point $x_0$. We use the scaled Cauchy coordinates
$(hf',f)$ as in \cite{HillairetMarzuola2023}, rather than $(h^2f',f)$ arising from the
convention $(pf',f)$ in
\cite[Section~5.5]{TeschlODE2012} with $p=h^2$.
This scaling is adapted to the semiclassical estimates
below. Define
\[
    Z_{\mathrm L}(E)
    :=h f_{\mathrm L}'(x_0;E)+if_{\mathrm L}(x_0;E),
    \qquad
    Z_{\mathrm R}(E)
    :=h f_{\mathrm R}'(x_0;E)+if_{\mathrm R}(x_0;E).
\]
Both functions are nonvanishing by uniqueness for the
interior initial-value problem. Choose continuous
arguments on $K$ and set
\[
    \Theta(E)
    :=\arg Z_{\mathrm L}(E)-\arg Z_{\mathrm R}(E).
\]
The Wronskian identity corresponding to
\cite[equation~(5.97)]{TeschlODE2012} (see also
\cite[equation~(2.6)]{KrugerTeschl2009}) is then
\[
    h\,\mathcal W(f_{\mathrm L},f_{\mathrm R})(x_0)
    =
    |Z_{\mathrm L}(E)|\,|Z_{\mathrm R}(E)|
    \sin\Theta(E).
\]
\begin{lemma}[Eigenvalue gaps]
\label{lem:interior-matching-phase}
Under the preceding assumptions, $\Theta$ is $C^1$ and
\begin{equation}\label{eq:interior-phase-identity}
    \Theta'(E)
    =
    \frac1h
    \left(
        \frac{\displaystyle\int_0^{x_0}f_{\mathrm L}(x;E)^2\,\dd x}
             {|Z_{\mathrm L}(E)|^2}
        +
        \frac{\displaystyle\int_{x_0}^Lf_{\mathrm R}(x;E)^2\,\dd x}
             {|Z_{\mathrm R}(E)|^2}
    \right)>0,
    \qquad E\in K.
\end{equation}
Moreover,
\[
    E\in\sigma(H_h)\cap K
    \quad\Longleftrightarrow\quad
    E\in K\ \text{and}\ \Theta(E)\in\pi\mathbb Z.
\]
If constants $C_{\mathrm L},C_{\mathrm R}>0$ satisfy
\begin{equation}\label{eq:matching-mass-assumptions}
    \int_0^{x_0}f_{\mathrm L}(x;E)^2\,\dd x
    \leq C_{\mathrm L}|Z_{\mathrm L}(E)|^2,
    \qquad
    \int_{x_0}^Lf_{\mathrm R}(x;E)^2\,\dd x
    \leq C_{\mathrm R}|Z_{\mathrm R}(E)|^2
\end{equation}
for every $E\in K$, then any two consecutive
eigenvalues $E_j<E_{j+1}$ of $H_h$ lying in $K$ satisfy
\begin{equation}\label{eq:interior-matching-gap}
    E_{j+1}-E_j
    \geq\frac{\pi h}{C_{\mathrm L}+C_{\mathrm R}}.
\end{equation}
\end{lemma}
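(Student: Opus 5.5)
The proof rests on the energy derivative of the Wronskian, which turns the derivative of each Prüfer angle into an $L^2$ mass. For $f=f_{\mathrm L}(\cdot;E)$, differentiate the equation $-h^2f''+Vf=Ef$ in $E$. This gives $-h^2(\partial_Ef)''+V\partial_Ef=E\partial_Ef+f$. Multiply by $f$, subtract the original equation multiplied by $\partial_Ef$, and integrate over $(x,x_0)$. The result is
\[
    h^2\bigl[\mathcal W(f,\partial_Ef)\bigr]_x^{x_0}=-\int_x^{x_0}f^2\,\dd s ,
\]
up to a sign fixed by the convention $\mathcal W(f,g)=fg'-f'g$. Letting $x\downarrow0$ and using the hypothesis $\mathcal W(f_{\mathrm L},\partial_Ef_{\mathrm L})(0^+)=0$, together with $f_{\mathrm L}\in L^2(0,x_0)$, we get $h^2\mathcal W(f_{\mathrm L},\partial_Ef_{\mathrm L})(x_0)=-\int_0^{x_0}f_{\mathrm L}^2$. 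The same computation on $(x_0,L)$ gives $h^2\mathcal W(f_{\mathrm R},\partial_Ef_{\mathrm R})(x_0)=+\int_{x_0}^Lf_{\mathrm R}^2$.

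Next, we express the argument derivative in Cauchy coordinates. Write $Z=hf'+if$ at $x_0$. Then
\[
    \partial_E\arg Z=\operatorname{Im}\bigl(\overline Z\,\partial_EZ\bigr)/|Z|^2 .
\]
Computing with real $f$,
\[
    \operatorname{Im}\bigl((hf'-if)(h\partial_Ef'+i\partial_Ef)\bigr)=h\bigl(f'\partial_Ef-f\partial_Ef'\bigr)=-h\,\mathcal W(f,\partial_Ef).
\]
Hence $\partial_E\arg Z_{\mathrm L}=\frac1h\int_0^{x_0}f_{\mathrm L}^2/|Z_{\mathrm L}|^2$ and $\partial_E\arg Z_{\mathrm R}=-\frac1h\int_{x_0}^Lf_{\mathrm R}^2/|Z_{\mathrm R}|^2$. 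Subtracting yields \eqref{eq:interior-phase-identity}. The $C^1$ regularity of $\Theta$ follows from the assumed $C^1$ dependence on $E$ in $C^1_{\mathrm{loc}}$ and the nonvanishing of $Z$. The main point needing care is the sign bookkeeping, together with justifying the passage of the Wronskian boundary limit through the integral. For the latter, the integrals are monotone in $x$ and the limits exist by assumption, so no difficulty arises.

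For the spectral characterization, $E\in K$ is an eigenvalue exactly when $f_{\mathrm L}$ and $f_{\mathrm R}$ are linearly dependent. Indeed, a dependent pair gives a function in the domain that satisfies both endpoint conditions. Conversely, any eigenfunction satisfies both endpoint requirements, and each requirement determines its solution up to a scalar. Linear dependence is equivalent to $\mathcal W(f_{\mathrm L},f_{\mathrm R})(x_0)=0$. By the displayed identity $h\mathcal W=|Z_{\mathrm L}||Z_{\mathrm R}|\sin\Theta$, this is in turn equivalent to $\Theta(E)\in\pi\mathbb Z$.

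Finally, suppose \eqref{eq:matching-mass-assumptions} holds. Then $0<\Theta'\le (C_{\mathrm L}+C_{\mathrm R})/h$ on $K$. Take consecutive eigenvalues $E_j<E_{j+1}$ in $K$; since $K$ is an interval, it contains $[E_j,E_{j+1}]$. Because $\Theta$ is strictly increasing there, $\Theta(E_j)$ and $\Theta(E_{j+1})$ are distinct elements of $\pi\mathbb Z$, so $\Theta(E_{j+1})-\Theta(E_j)\ge\pi$. The mean value theorem then gives $\pi\le (E_{j+1}-E_j)(C_{\mathrm L}+C_{\mathrm R})/h$, which is \eqref{eq:interior-matching-gap}.
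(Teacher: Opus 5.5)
Your proposal is correct and matches the paper's proof essentially step for step. Both arguments integrate $h^2\mathcal W(f,\partial_E f)'=-f^2$ from each endpoint and combine it with $\operatorname{Im}(\overline Z\,\partial_E Z)=-h\,\mathcal W(f,\partial_E f)$ to get the phase identity. Both deduce the spectral characterization from $h\,\mathcal W(f_{\mathrm L},f_{\mathrm R})(x_0)=|Z_{\mathrm L}||Z_{\mathrm R}|\sin\Theta$, and both obtain the gap by integrating $\Theta'\le(C_{\mathrm L}+C_{\mathrm R})/h$ over $[E_j,E_{j+1}]$. The only difference is cosmetic: you use $\Theta(E_{j+1})-\Theta(E_j)\ge\pi$ where the paper states equality, and the inequality is all that is needed.
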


\begin{proof}
We adapt the phase-variation argument of
\cite[Section~3.1]{HillairetMarzuola2023}
by comparing the phases of the left- and
right-admissible solutions at an interior matching point. Throughout this proof, a dot denotes differentiation
with respect to $E$, and a prime denotes differentiation with respect to $x$.
For either solution $f$, differentiating its equation gives
\[
    -h^2\dot f''+V\dot f=f+E\dot f,
\]
and hence
\[
    \mathcal W(f,\dot f)'=-h^{-2}f^2.
\]
Integration from the corresponding endpoint yields
\[
    \mathcal W(f_{\mathrm L},\dot f_{\mathrm L})(x_0)
    =-h^{-2}\int_0^{x_0}f_{\mathrm L}^2\,\dd x,
\]
and
\[
    \mathcal W(f_{\mathrm R},\dot f_{\mathrm R})(x_0)
    =h^{-2}\int_{x_0}^Lf_{\mathrm R}^2\,\dd x.
\]

A nonzero solution cannot have both its value and
derivative zero at $x_0$. Thus $Z_{\mathrm L}$ and
$Z_{\mathrm R}$ are nonvanishing $C^1$ functions.
For $Z=hf'(x_0)+if(x_0)$,
\[
    \operatorname{Im}(\overline Z\dot Z)
    =-h\,\mathcal W(f,\dot f)(x_0),
    \qquad
    \frac{\dd}{\dd E}\arg Z
    =
    \frac{\operatorname{Im}(\overline Z\dot Z)}{|Z|^2}.
\]
Therefore
\[
    \frac{\dd}{\dd E}\arg Z_{\mathrm L}
    =
    \frac{\int_0^{x_0}f_{\mathrm L}^2\,\dd x}
         {h|Z_{\mathrm L}|^2},
    \qquad
    \frac{\dd}{\dd E}\arg Z_{\mathrm R}
    =
    -\frac{\int_{x_0}^Lf_{\mathrm R}^2\,\dd x}
          {h|Z_{\mathrm R}|^2}.
\]
Subtracting proves \eqref{eq:interior-phase-identity}.

The energy $E$ is an eigenvalue precisely when the
left- and right-admissible solutions are proportional,
or equivalently,
\[
    \mathcal W(f_{\mathrm L},f_{\mathrm R})(x_0)=0.
\]
By the scaled version of the Wronskian identity
\cite[equation~(5.97)]{TeschlODE2012} established above,
and since $Z_{\mathrm L}(E)$ and $Z_{\mathrm R}(E)$
are nonzero, this is equivalent to
\[
    \sin\Theta(E)=0,
    \qquad\text{that is,}\qquad
    \Theta(E)\in\pi\mathbb Z.
\]

Since $\Theta$ is strictly increasing, consecutive
eigenvalues satisfy
\[
    \Theta(E_{j+1})-\Theta(E_j)=\pi.
\]
Indeed, a larger increase would give another energy
between them at which $\Theta$ belongs to $\pi\mathbb Z$.
Under \eqref{eq:matching-mass-assumptions},
\[
    0<\Theta'(E)
    \leq\frac{C_{\mathrm L}+C_{\mathrm R}}{h}.
\]
Integration between $E_j$ and $E_{j+1}$ proves
\eqref{eq:interior-matching-gap}.
\end{proof}

For the inverse-square equations considered below, we
choose the normalized principal solutions at the
respective endpoints. The principal-solution estimates
recalled in the preliminaries ensure both their $C^1$
dependence on $E$ and the
vanishing of the corresponding endpoint Wronskians,
including the critical logarithm-free case.

To verify the mass bounds required by
\cref{lem:interior-matching-phase}, we next establish
a uniform transfer estimate controlling the $L^2$ mass
of the principal solution by its semiclassically scaled Cauchy data at an interior point.
\begin{lemma}[Uniform transfer for the principal solution]
\label{lem:endpoint-transfer1}
Fix $\rho>1/2$, $d>0$, and a compact real interval $K$.
Put
\[
    a_\rho:=\rho^2-\frac14,
    \qquad p_\rho:=\frac12+\rho.
\]
Suppose that
\[
    W\in C^1([0,d];\mathbb R),\qquad
    \|W\|_{C^1([0,d])}\leq C_*,
    \qquad
    E-W(x)\geq\eta_*>0
\]
for every $E\in K$ and $x\in[0,d]$.
Consider
\begin{equation}\label{eq:endpoint-equation1}
    -h^2v''
    +\frac{h^2a_\rho}{x^2}v
    +Wv=Ev,
    \qquad 0<x<d.
\end{equation}
Assume that $v$ satisfies the Friedrichs condition at
zero, and write
\[
    v(x)=A\phi_{\rho,h}(x;E),
    \qquad
    \lim_{x\downarrow0}
    x^{-p_\rho}\phi_{\rho,h}(x;E)=1.
\]
There exist $c,C,L_0,h_0>0$, depending only on the
displayed fixed parameters and bounds, such that
$L_0h<d$ and
\begin{equation}\label{eq:endpoint-cauchy1}
    c h^{2p_\rho}|A|^2
    \leq |v(d)|^2+h^2|v'(d)|^2
    \leq C h^{2p_\rho}|A|^2
\end{equation}
for $0<h<h_0$.
Moreover,
\begin{align}
    \int_0^d|v|^2\,\dd x
    &\leq C h^{2p_\rho}|A|^2,
    \notag\\
    \int_0^{L_0h}|v|^2\,\dd x
    &\leq C h^{2p_\rho+1}|A|^2.
    \label{eq:endpoint-mass1}
\end{align}
In particular,
\begin{equation}\label{eq:principal-mass-cauchy}
    \int_0^d|v|^2\,\dd x
    \leq C\bigl(|v(d)|^2+h^2|v'(d)|^2\bigr).
\end{equation}
\end{lemma}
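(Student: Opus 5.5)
We prove the lemma by splitting $(0,d)$ at the semiclassical scale $x\asymp h$. Near the endpoint we use the principal-solution asymptotics in rescaled variables; away from it we use a WKB/energy argument in the classically allowed region. Setting $x=hs$ and $w(s):=v(hs)$, equation \eqref{eq:endpoint-equation1} becomes
\[
    -w''+\frac{a_\rho}{s^2}w+\bigl(W(hs)-E\bigr)w=0,
    \qquad 0<s<d/h.
\]
On $s\in(0,L_0]$ this is a bounded perturbation of the Bessel equation at energy $E-W(0)\in[\eta_*,C]$. Moreover, $w(s)=Ah^{p_\rho}\widetilde\phi(s)$, where $\widetilde\phi$ is the normalized principal solution of the rescaled equation. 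By \eqref{eq:general-principal-asymptotics}, uniformly in $E\in K$ and in potentials with the stated $C^1$ bound, $\widetilde\phi$ is close to the explicit Bessel solution
\[
    \Gamma(\rho+1)\,2^{\rho}\,\kappa_E^{-\rho}s^{1/2}J_\rho(\kappa_E s),
    \qquad \kappa_E=\sqrt{E-W(0)}.
\]
The perturbation $W(hs)-W(0)=O(hs)$ is small on $s\le L_0$ once $h<h_0$. A Volterra/Gronwall comparison on $[0,L_0]$ therefore gives
\[
    \int_0^{L_0}|\widetilde\phi|^2\,\dd s\le C,
    \qquad
    c\le |\widetilde\phi(L_0)|^2+|\widetilde\phi'(L_0)|^2\le C,
\]
where the lower bound holds because the Bessel Cauchy data never vanish simultaneously. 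Undoing the scaling, the mass on $(0,L_0h)$ is $h\int_0^{L_0}|w|^2\,\dd s\le Ch^{2p_\rho+1}|A|^2$, and the scaled Cauchy data $(v,hv')$ at $x=L_0h$ have size $\asymp h^{p_\rho}|A|$.

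We choose $L_0$ large, depending on $\rho,\eta_*,C_*$, so that $a_\rho/s^2\le\eta_*/2$ for $s\ge L_0$. Then on $[L_0h,d]$ the full potential satisfies
\[
    E-W(x)-\frac{h^2a_\rho}{x^2}\ge\frac{\eta_*}{2},
\]
so the region is uniformly classically allowed. On it we use the modified energy
\[
    \mathcal E(x):=h^2|v'|^2+k(x)^2|v|^2,
    \qquad
    k^2:=E-W-\frac{h^2a_\rho}{x^2}.
\]
Differentiating, the cross terms cancel and
\[
    \mathcal E'=(k^2)'|v|^2,
    \qquad\text{so}\qquad
    |\mathcal E'|\le\frac{|(k^2)'|}{k^2}\,\mathcal E.
\]
Here $|(k^2)'|\le C_*+2h^2a_\rho x^{-3}$, and
\[
    \int_{L_0h}^d \frac{|(k^2)'|}{k^2}\,\dd x\le C\bigl(d+L_0^{-2}\bigr),
\]
which is bounded independently of $h$. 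Gronwall gives $\mathcal E(x)\asymp\mathcal E(L_0h)$ uniformly on $[L_0h,d]$. Since $k^2\asymp1$, this transports the two-sided bound on the scaled Cauchy data from $L_0h$ to $d$, proving \eqref{eq:endpoint-cauchy1}. It also gives
\[
    \int_{L_0h}^d|v|^2\,\dd x\le Cd\,h^{2p_\rho}|A|^2.
\]
Adding the near-endpoint mass yields \eqref{eq:endpoint-mass1}, and combining this with the lower bound in \eqref{eq:endpoint-cauchy1} gives \eqref{eq:principal-mass-cauchy}.

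The main obstacle is the matching step at $s=L_0$. We need the near-endpoint comparison with the Bessel solution to be uniform in $E\in K$ and $W$ while $L_0$ is fixed large. The lower bound on the Cauchy data at $L_0$ must hold for every such $L_0$: this follows because $|J_\rho|^2+|J_\rho'|^2$ of the model, suitably scaled, is bounded below by $c\,s^{-1}$-type quantities, and with $L_0$ fixed everything is compact in $(E,L_0)$. I would first try to obtain this by writing the rescaled equation on $[0,L_0]$ as a perturbation of the constant-energy Bessel equation with error $O(hL_0)$, and then use continuous dependence on parameters together with compactness of $K$.
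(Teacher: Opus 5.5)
Your proof is correct. Its second half is exactly the paper's argument: the choice of $L_0$ with $a_\rho/L_0^2\le\eta_*/2$, the modified energy $h^2|v'|^2+k^2|v|^2$ on $[L_0h,d]$, and the two-sided Gronwall bound using $\int|(k^2)'|/k^2\lesssim d+L_0^{-2}$.

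The difference is in how you get the two-sided Cauchy-data bound at $s=L_0$ and the mass bound on $(0,L_0)$.

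\emph{Your route.} You compare the rescaled principal solution on all of $[0,L_0]$ with the explicit profile $\Gamma(\rho+1)2^\rho\kappa_E^{-\rho}s^{1/2}J_\rho(\kappa_E s)$. The perturbation $W(hs)-W(0)=O(C_*hL_0)$ is handled by a Volterra equation with Bessel Green-kernel bounds. The lower bound then comes from two facts: the model Cauchy data never vanish, and $\kappa_E$ ranges over the compact set $[\sqrt{\eta_*},(\max K+C_*)^{1/2}]$. Note that the small-$x$ principal asymptotics you cite only control $\widetilde\phi$ near $s=0$. Closeness on all of $[0,L_0]$ genuinely comes from the Volterra comparison you describe next, which is fine.

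\emph{The paper's route.} It uses no model solution. The uniform small-$s$ asymptotics $F_h=s^{p_\rho}(1+O(s^2))$ give two-sided bounds on the Cauchy data at a small fixed $s_*$. These are then transported to $L_0$ by forward and backward Gronwall for the system $Y'=B_hY$, whose matrix is bounded on $[s_*,L_0]$ simply because $|W(hs)-E|$ is.

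\emph{Comparison.} The paper's route needs only an $L^\infty$ bound on $W$ near the endpoint and no smallness of $h$ beyond $L_0h<d$. It therefore disposes of what you called the main obstacle more cheaply. Your route costs the Lipschitz bound on $W$, a further reduction of $h_0$, and uniform Bessel kernel estimates. In exchange it gives the stronger conclusion that the rescaled profile is $O(h)$-close to the explicit Bessel solution on $[0,L_0]$.

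Finally, the large-argument bound $|J_\rho|^2+|J_\rho'|^2\gtrsim s^{-1}$ that you mention is not needed. Since $L_0$ is fixed, compactness in $\kappa_E$ alone suffices.
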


\begin{proof}
Choose $L_0\geq1$ such that
$a_\rho/L_0^2\leq\eta_*/2$, and take $h_0\leq1$
sufficiently small that $L_0h_0<d$.

We first estimate the principal solution on the scale
$x=hs$. Set
\[
    F_h(s):=h^{-p_\rho}\phi_{\rho,h}(hs;E).
\]
Then
\[
    -F_h''+\frac{a_\rho}{s^2}F_h
    +(W(hs)-E)F_h=0,
    \qquad
    \lim_{s\downarrow0}s^{-p_\rho}F_h(s)=1,
\]
and $v(hs)=Ah^{p_\rho}F_h(s)$.
Since $W(hs)-E$ is uniformly bounded on $[0,L_0]$,
the principal-solution estimates give
\[
    F_h(s)=s^{p_\rho}(1+O(s^2)),
    \qquad
    F_h'(s)=p_\rho s^{p_\rho-1}+O(s^{p_\rho+1})
\]
on a fixed sufficiently small interval, with uniform
remainder bounds. Thus we may choose a fixed
$s_*\in(0,L_0)$ and constants $c_1,C_1>0$ such that
\[
    c_1\leq |F_h(s_*)|^2+|F_h'(s_*)|^2\leq C_1.
\]

To propagate these bounds to $L_0$, write
\[
    Y_h(s):=
    \begin{pmatrix}F_h(s)\\ F_h'(s)\end{pmatrix},
    \qquad
    Y_h'(s)=B_h(s)Y_h(s),
    \qquad
    B_h(s):=
    \begin{pmatrix}
        0&1\\
        a_\rho s^{-2}+W(hs)-E&0
    \end{pmatrix}.
\]
On $[s_*,L_0]$, the matrix norm satisfies
$\|B_h(s)\|\leq M$ for a constant independent of
$h,E,W$ within the stated bounds.
Let $\Psi_h$ solve
\[
    \Psi_h'=B_h\Psi_h,
    \qquad \Psi_h(s_*)=I.
\]
Gronwall's inequality for the forward and backward
systems gives
\[
    \|\Psi_h(s)\|\leq e^{M(s-s_*)},
    \qquad
    \|\Psi_h(s)^{-1}\|\leq e^{M(s-s_*)}.
\]
Since $Y_h(s)=\Psi_h(s)Y_h(s_*)$, it follows that
\[
    e^{-M(s-s_*)}|Y_h(s_*)|
    \leq |Y_h(s)|
    \leq e^{M(s-s_*)}|Y_h(s_*)|.
\]
Together with the principal estimates on $(0,s_*)$,
this yields
\[
    c_2\leq |F_h(L_0)|^2+|F_h'(L_0)|^2\leq C_2,
    \qquad
    \int_0^{L_0}|F_h(s)|^2\,\dd s\leq C_2.
\]
Returning to $x$ and writing $x_h=L_0h$, we obtain
\begin{align*}
    c_2h^{2p_\rho}|A|^2
    &\leq |v(x_h)|^2+h^2|v'(x_h)|^2
    \leq C_2h^{2p_\rho}|A|^2,\\
    \int_0^{x_h}|v|^2\,\dd x
    &\leq C_2h^{2p_\rho+1}|A|^2.
\end{align*}

We next propagate the Cauchy-data bounds from $x_h$ to $d$.
Define
\[
    K_h(x):=E-W(x)-\frac{h^2a_\rho}{x^2},
    \qquad
    S_h(x):=h^2|v'(x)|^2+K_h(x)|v(x)|^2.
\]
By the choice of $L_0$, there is a fixed $K_*>0$ such that
\[
    \frac{\eta_*}{2}\leq K_h(x)\leq K_*,
    \qquad x_h\leq x\leq d.
\]
Consequently, with
$m_*:=\min\{1,\eta_*/2\}$ and $M_*:=\max\{1,K_*\}$,
\[
    m_*\bigl(|v|^2+h^2|v'|^2\bigr)
    \leq S_h
    \leq M_*\bigl(|v|^2+h^2|v'|^2\bigr).
\]
Differentiating and using $h^2v''+K_hv=0$ gives
\[
    S_h'=K_h'|v|^2,
    \qquad
    |S_h'|\leq\frac{2}{\eta_*}|K_h'|S_h.
\]
Moreover,
\[
    \int_{x_h}^d|K_h'(x)|\,\dd x
    \leq dC_*+2h^2a_\rho\int_{x_h}^d x^{-3}\,\dd x
    \leq dC_*+\frac{a_\rho}{L_0^2}.
\]
Hence, setting
\[
    C_0:=\frac{2}{\eta_*}
          \left(dC_*+\frac{a_\rho}{L_0^2}\right),
\]
Gronwall's inequality in both directions yields
\[
    e^{-C_0}S_h(x_h)
    \leq S_h(x)
    \leq e^{C_0}S_h(x_h),
    \qquad x_h\leq x\leq d.
\]
Combining these inequalities with the bounds at $x_h$,
we obtain uniform constants $c,C>0$ such that
\[
    ch^{2p_\rho}|A|^2
    \leq |v(x)|^2+h^2|v'(x)|^2
    \leq Ch^{2p_\rho}|A|^2,
    \qquad x_h\leq x\leq d.
\]
Taking $x=d$ proves \eqref{eq:endpoint-cauchy1}.
Integrating the upper bound and adding the estimate
on $(0,x_h)$ gives
\[
    \int_0^d|v|^2\,\dd x
    \leq C(d+h)h^{2p_\rho}|A|^2
    \leq C' h^{2p_\rho}|A|^2,
\]
which, together with the near-endpoint estimate,
proves \eqref{eq:endpoint-mass1}.
Finally, combining this mass bound with the lower
bound in \eqref{eq:endpoint-cauchy1} proves
\eqref{eq:principal-mass-cauchy}.
\end{proof}
We next record an elementary identity in which the
inverse-square contribution cancels. This identity
will be used to control the $L^2$ mass of a solution
by its interior Cauchy data.
\begin{lemma}
\label{lem:rellich11}
If a real solution $v$ satisfies
\[
    -h^2v''+\left(\frac{g}{x^2}+W(x)\right)v=Ev,
\]
then
\begin{equation}\label{eq:rellich-flux1}
    F(x)
    :=
    x\left\{
        h^2(v')^2+
        \left(E-\frac{g}{x^2}-W\right)v^2
    \right\}
    -h^2vv'
\end{equation}
satisfies
\begin{equation}\label{eq:rellich-identity11}
    F'(x)=(2E-2W-xW')v^2.
\end{equation}
\end{lemma}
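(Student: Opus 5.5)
This identity is a direct differentiation, and I will carry it out term by term. Throughout I will use the equation in the form
\[
    h^2v''=\Bigl(\frac{g}{x^2}+W-E\Bigr)v,
\]
and write $K(x):=E-g/x^2-W(x)$, so that $h^2v''=-Kv$ and
\[
    F=x\bigl(h^2(v')^2+Kv^2\bigr)-h^2vv'.
\]

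\emph{Step 1: differentiate the bracket.} The derivative of $h^2(v')^2+Kv^2$ is $2h^2v'v''+K'v^2+2Kvv'$. Substituting $h^2v''=-Kv$ gives $-2Kvv'+K'v^2+2Kvv'=K'v^2$. The product rule then yields
\[
    \bigl[x(h^2(v')^2+Kv^2)\bigr]'=h^2(v')^2+Kv^2+xK'v^2.
\]

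\emph{Step 2: differentiate the correction term.} We have $(h^2vv')'=h^2(v')^2+h^2vv''=h^2(v')^2-Kv^2$. Subtracting this from the result of Step~1 gives
\[
    F'=2Kv^2+xK'v^2=(2K+xK')v^2.
\]

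\emph{Step 3: observe the cancellation.} Since $K'=2g/x^3-W'$, we get $xK'=2g/x^2-xW'$. Therefore
\[
    2K+xK'=2E-\frac{2g}{x^2}-2W+\frac{2g}{x^2}-xW'=2E-2W-xW',
\]
and the inverse-square terms cancel exactly. This proves \eqref{eq:rellich-identity11}.

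There is no real obstacle; the only point needing care is the regularity. We need $W\in C^1$, so that $K'$ exists, and $v\in C^2$ on the open interval, which follows from the equation. The identity holds pointwise on $(0,d)$.
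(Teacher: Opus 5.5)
Your proof is correct and follows the paper's argument: differentiate $F$, substitute $h^2v''=(g/x^2+W-E)v$ so the $vv'$ terms cancel, and note that the inverse-square terms drop out because $2g/x^2+x\,\frac{\dd}{\dd x}(g/x^2)=0$. Writing $F'=(2K+xK')v^2$ with $K=E-g/x^2-W$ is just a compact way of organizing the same computation.
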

\begin{proof}
Differentiate \eqref{eq:rellich-flux1} and substitute
$h^2v''=(g/x^2+W-E)v$.
The terms containing $vv'$ cancel, and the
inverse-square contribution vanishes because
\[
    2\frac{g}{x^2}
    +x\frac{\dd}{\dd x}\left(\frac{g}{x^2}\right)=0.
\]
\end{proof}
With these preparations, we now establish uniform
spectral gaps and lower bounds for the principal
coefficients in the transversal regime for the rescaled Friedrichs operator $h^2H_{\nu,\mu}$; see
\cref{Transversal} for an explanation of the terminology: transversal regime.
\begin{lemma}[Uniform spectral gaps and principal-coefficient bounds in the transversal regime]
\label{lem:transversal-family11}
Fix $a>0$ and $\tau_*\in[0,1)$.
For $h>0$ and $0\leq\tau\leq\tau_*$, let
\begin{equation}\label{eq:transversal-family1}
    Q_{h,\tau}
    :=
    -h^2\partial_r^2
    +\frac{\tau^2-h^2/4}{r^2}
    +\frac{h^2a}{(1-r)^2}
\end{equation}
be the Friedrichs realization on $(0,1)$.
There exist $h_0,c>0$ and a fixed compact interval $K$
containing $1$ in its interior such that consecutive
eigenvalues of $Q_{h,\tau}$ lying in $K$ have gap at
least $ch$ whenever $0<h<h_0$.
The constants are independent of $h$ and $\tau$.
Moreover, put $p=\frac12+\sqrt{a+1/4}>1$.
If $v$ is an $L^2(0,1)$-normalized eigenfunction
with eigenvalue in $K$, write its principal
endpoint expansion as
\[
    v(1-y)=A y^p(1+O(y^2)),
    \qquad y:=1-r\downarrow0.
\]
Then
\begin{equation}\label{eq:transversal-principal-lower}
    1\leq C h^{2p}|A|^2,
\end{equation}
where $C>0$ is independent of $h$, $\tau$, and $v$.
\end{lemma}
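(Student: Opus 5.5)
We will apply \cref{lem:interior-matching-phase} to $Q_{h,\tau}$ on $(0,1)$ with the matching point $x_0=\frac12$, taking $f_{\mathrm L}$ and $f_{\mathrm R}$ to be the normalized principal solutions at $r=0$ and $r=1$. These exist, are $C^1$ in $E$, and have vanishing endpoint Wronskians by the preliminaries on principal solutions. The gap bound then reduces to the mass bounds \eqref{eq:matching-mass-assumptions} with constants $C_{\mathrm L},C_{\mathrm R}$ independent of $h$ and $\tau$. The key geometric input is transversality. Since $\tau\leq\tau_*<1$, the energy $E$ near $1$ exceeds the potential $\tau^2/r^2$ on $[\frac12,1]$ by a fixed amount once $\tau_*^2<E$ uniformly; indeed $E-\tau^2/r^2\geq E-4\tau_*^2$ there, which is not positive in general. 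We therefore instead choose $x_0\in(\tau_*',1)$ close to $1$, with $\tau_*<\tau_*'<1$. We then choose $K=[1-\eta,1+\eta]$ so small that $E-\tau^2/r^2\geq\eta_*>0$ on $[x_0,1]$ for all $E\in K$ and $\tau\leq\tau_*$.

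\textbf{Right side.} Set $x=1-r$ and $d=1-x_0$. Near $r=1$ the equation is \eqref{eq:endpoint-equation1} with $\rho=\mu$ and $W(x)=(\tau^2-h^2/4)/(1-x)^2$. This $W$ is $C^1$-bounded on $[0,d]$ uniformly in $\tau\leq\tau_*$ and $h\leq1$, and satisfies $E-W\geq\eta_*$. \Cref{lem:endpoint-transfer1} therefore gives \eqref{eq:principal-mass-cauchy}, which is exactly the right mass bound with $|Z_{\mathrm R}|^2=|v(x_0)|^2+h^2|v'(x_0)|^2$.

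\textbf{Left side.} We need $\int_0^{x_0}f_{\mathrm L}^2\leq C|Z_{\mathrm L}|^2$, and this is the main obstacle. The left endpoint carries the potential $(\tau^2-h^2/4)/r^2$ with a classically forbidden region $r<\tau$ when $\tau>0$, and the critical case $\nu=0$ occurs when $\tau=0$. For this we use \cref{lem:rellich11} with $g=\tau^2-h^2/4$ and $W=0$. Then $F'=2Ev^2$, and $F(0^+)=0$ for the principal solution, because $xv'^2$, $v^2/x$ and $vv'$ all tend to $0$ given $v\sim r^{\nu+1/2}$ (including the logarithm-free branch). Integrating from $0$ to $x_0$ gives
\[
2E\int_0^{x_0}v^2=F(x_0)\leq C\bigl(h^2v'(x_0)^2+v(x_0)^2\bigr),
\]
since $|g|/x_0^2$ is bounded. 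As $E\geq1-\eta>0$, this yields the left bound. \Cref{lem:interior-matching-phase} then gives gaps $\geq\pi h/(C_{\mathrm L}+C_{\mathrm R})$.

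\textbf{Principal coefficient.} Let $v$ be a normalized eigenfunction with eigenvalue $E\in K$. Then $v$ is a multiple of both $f_{\mathrm L}$ and $f_{\mathrm R}$. The Rellich bound on $(0,x_0)$ together with the upper mass bound in \eqref{eq:endpoint-mass1} on $(x_0,1)$ give
\[
1=\|v\|^2\leq C\bigl(|v(x_0)|^2+h^2|v'(x_0)|^2\bigr)\leq Ch^{2p}|A|^2,
\]
where the last step is the upper bound in \eqref{eq:endpoint-cauchy1}, applied with $p=p_\mu$. This proves \eqref{eq:transversal-principal-lower}. Two technical checks remain: that $F(0^+)=0$ uniformly in the sign of $g$, and that $h_0$ from \cref{lem:endpoint-transfer1} is uniform in $\tau$.
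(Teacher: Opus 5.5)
Your route is the paper's: a matching point $x_0\in(\tau_*,1)$ with $K$ small enough that $E-\tau_*^2/x_0^2$ stays positive, \cref{lem:endpoint-transfer1} on the right, \cref{lem:rellich11} on the left, \cref{lem:interior-matching-phase} for the gap, and the upper bound in \eqref{eq:endpoint-cauchy1} for the principal coefficient. The left-side step, however, is wrong as written.

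On $(0,x_0)$ the function $f_{\mathrm L}$ solves $Q_{h,\tau}f_{\mathrm L}=Ef_{\mathrm L}$. This equation still contains the right-endpoint term $h^2a/(1-r)^2$, so you cannot apply \cref{lem:rellich11} with $W=0$. The identity $F'=2Ef_{\mathrm L}^2$ is false: with your $F$ one actually gets $F'=(2E-W)f_{\mathrm L}^2+2rWf_{\mathrm L}f_{\mathrm L}'$, and your argument does not handle the cross term. The repair, which is exactly what the paper does, is to take $W(r)=h^2a/(1-r)^2$, which is smooth on $[0,x_0]$. Since $2W+rW'=2h^2a(1-r)^{-3}$,
\[
    F'(r)=\Bigl(2E-\frac{2h^2a}{(1-r)^3}\Bigr)f_{\mathrm L}(r)^2
    \geq(\min K)\,f_{\mathrm L}(r)^2,
    \qquad 0<r\leq x_0,
\]
provided $h_0$ is chosen with $2h_0^2a/(1-x_0)^3\leq\min K$. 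This is where the smallness of $h$ enters the left estimate. Moreover, $F(x_0)$ only acquires the bounded extra term $-x_0W(x_0)f_{\mathrm L}(x_0)^2$, so the Cauchy-data bound for $F(x_0)$ survives.

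Your justification of $F(0^+)=0$ is also incorrect in the critical case $\tau=0$. There $f_{\mathrm L}\sim r^{1/2}$, and $r(f_{\mathrm L}')^2$, $f_{\mathrm L}^2/r$ and $f_{\mathrm L}f_{\mathrm L}'$ each have nonzero limits. What vanishes is their combination. With $s=\frac12+\tau/h$, the principal asymptotics give $F(r)=(h^2s^2-g-h^2s)r^{2s-1}+O(r^{2s+1})$. The leading coefficient is zero because $g=\tau^2-h^2/4=h^2s(s-1)$. This fact is used only at fixed $h,\tau,E$, so no uniformity in the sign of $g$ is needed.

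With these two corrections your argument is complete. The uniformity of $h_0$ and of the constants in \cref{lem:endpoint-transfer1} is automatic, since $d$, $C_*$, $\eta_*$ and $K$ are independent of $\tau\leq\tau_*$.
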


\begin{proof}
We estimate the masses of the left and right principal
solutions on their respective sides of a fixed interior
point, and then apply \cref{lem:interior-matching-phase}.

Choose $\tau_*<r_0<1$ and a sufficiently small
$\delta>0$ so that
\[
    K:=[1-\delta,1+\delta],
    \qquad
    E_{\min}:=1-\delta>\frac{\tau_*^2}{r_0^2}.
\]
For each $E\in K$, let $f_{\mathrm L}(\cdot;E)$ and
$f_{\mathrm R}(\cdot;E)$ be the normalized principal
solutions at $r=0$ and $r=1$, respectively.
No normalization on the full interval is imposed.

We first control the right principal solution by using \cref{lem:endpoint-transfer1}.
Set $x=1-r$ and
\[
    \widetilde f_{\mathrm R}(x;E)
    :=f_{\mathrm R}(1-x;E).
\]
Its equation on $(0,1-r_0)$ is
\[
    -h^2\widetilde f_{\mathrm R}''
    +\frac{h^2a}{x^2}\widetilde f_{\mathrm R}
    +W_{\mathrm R}(x)\widetilde f_{\mathrm R}
    =E\widetilde f_{\mathrm R},
    \qquad
    W_{\mathrm R}(x)
    :=\frac{\tau^2-h^2/4}{(1-x)^2}.
\]
The potentials $W_{\mathrm R}$ have a uniform $C^1$
bound on $[0,1-r_0]$, and
\[
    E-W_{\mathrm R}(x)
    \geq E_{\min}-\frac{\tau_*^2}{r_0^2}>0.
\]
Applying \cref{lem:endpoint-transfer1} with
$\rho=\sqrt{a+1/4}>1/2$ gives
\begin{equation}\label{eq:transversal-right-mass1}
    \int_{r_0}^1f_{\mathrm R}^2\,\dd r
    \leq C\left(
        |f_{\mathrm R}(r_0)|^2
        +h^2|f_{\mathrm R}'(r_0)|^2
    \right).
\end{equation}
We next use \cref{lem:rellich11} to control the left principal solution. Put
\[
    g:=\tau^2-\frac{h^2}{4},
    \qquad
    W(r):=\frac{h^2a}{(1-r)^2},
\]
and define
\[
    F(r)
    :=
    r\left[
        h^2\bigl(f_{\mathrm L}'(r)\bigr)^2
        +
        \left(
            E-\frac{g}{r^2}
            -\frac{h^2a}{(1-r)^2}
        \right)f_{\mathrm L}(r)^2
    \right]
    -h^2f_{\mathrm L}(r)f_{\mathrm L}'(r).
\]

The principal exponent at zero is
\[
    s=\frac12+\frac{\tau}{h},
    \qquad h^2s(s-1)=g.
\]
The normalized principal solution therefore satisfies
\[
    f_{\mathrm L}(r)=r^s+O(r^{s+2}),
    \qquad
    f_{\mathrm L}'(r)=sr^{s-1}+O(r^{s+1}),
    \qquad r\downarrow0.
\]
For $\tau=0$, this is the logarithm-free critical
Friedrichs solution. These asymptotics are used with
$h$, $\tau$, and $E$ fixed.

Since $E-W(r)$ is bounded near zero, substitution
into the definition of $F$ gives
\[
    F(r)
    =
    \bigl(h^2s^2-g-h^2s\bigr)r^{2s-1}
    +O(r^{2s+1}).
\]
The leading coefficient vanishes because
$g=h^2s(s-1)$. As $s\geq1/2$, it follows that
\[
    F(r)=O(r^{2s+1})\longrightarrow0
    \qquad\text{as }r\downarrow0.
\]

By \eqref{eq:rellich-identity11},
\[
    F'(r)=\bigl(2E-2W(r)-rW'(r)\bigr)
          f_{\mathrm L}(r)^2.
\]
Here
\[
    W'(r)=\frac{2h^2a}{(1-r)^3},
    \qquad
    2W(r)+rW'(r)=\frac{2h^2a}{(1-r)^3}.
\]
Decreasing $h_0$ if necessary, we may assume that
\[
    \frac{2h_0^2a}{(1-r_0)^3}\leq E_{\min}.
\]
Consequently, for $0<h<h_0$, $0<r\leq r_0$,
and $E\in K$,
\[
    2E-2W(r)-rW'(r)
    =
    2E-\frac{2h^2a}{(1-r)^3}
    \geq
    2E_{\min}-\frac{2h_0^2a}{(1-r_0)^3}
    \geq E_{\min}.
\]
Integration of \eqref{eq:rellich-identity11} therefore gives
\[
    E_{\min}\int_0^{r_0}f_{\mathrm L}^2\,\dd r
    \leq F(r_0).
\]
The coefficients in $F(r_0)$ are uniformly bounded, Viewed as a quadratic form in $f_{\mathrm L}(r_0)$
and $h f_{\mathrm L}'(r_0)$, the expression $F(r_0)$
has coefficients bounded uniformly for
\[
    |g|\leq\tau_*^2+\frac{h_0^2}{4},
    \qquad
    W(r_0)\leq\frac{h_0^2a}{(1-r_0)^2}.
\] and
\[
    h^2|f_{\mathrm L}f_{\mathrm L}'|
    \leq
    \frac h2\left(
        |f_{\mathrm L}|^2+h^2|f_{\mathrm L}'|^2
    \right).
\]
Consequently,
\begin{equation}\label{eq:transversal-left-mass}
    \int_0^{r_0}f_{\mathrm L}^2\,\dd r
    \leq C\left(
        |f_{\mathrm L}(r_0)|^2
        +h^2|f_{\mathrm L}'(r_0)|^2
    \right).
\end{equation}

The endpoint differentiability and Wronskian conditions
of \cref{lem:interior-matching-phase} hold by the
principal-solution properties stated above.
The two estimates
\eqref{eq:transversal-right-mass1} and
\eqref{eq:transversal-left-mass} verify the mass
hypotheses of \cref{lem:interior-matching-phase}, uniformly for $E\in K$.
Hence consecutive eigenvalues in $K$ are separated
by at least $ch$.

For a normalized eigenfunction $v$, both mass estimates
apply to the same global solution. Thus
\[
    1\leq C\bigl(|v(r_0)|^2+h^2|v'(r_0)|^2\bigr).
\]
The right-endpoint Cauchy-data estimate
\eqref{eq:endpoint-cauchy1}, with the coefficient $A$
of $v(1-y)$, bounds the right-hand side by
$Ch^{2p}|A|^2$, proving
\eqref{eq:transversal-principal-lower}.
\end{proof}
\begin{remark}[Transversal and glancing regimes.]\label{Transversal}
The terminology is motivated by the geometry of the
billiard flow in the unit ball. The relevant quantity
is the angular momentum relative to the total frequency,
rather than the angular parameter alone. For a mode
with frequency $\omega$, set
\[
    h=\omega^{-1},
    \qquad
    \tau=h\nu=\frac{\nu}{\omega}.
\]
Then $h^2H_{\nu,\mu}=Q_{h,\tau}$, and the corresponding
eigenvalue is $1$. The radial principal symbol in the
interior is
\[
    \mathfrak p_\tau(r,\xi_r)
    =\xi_r^2+\frac{\tau^2}{r^2}.
\]
At unit energy, its normal momentum at the geometric
boundary satisfies
\[
    |\xi_r|=\sqrt{1-\tau^2}.
\]
We therefore call the range $\tau\leq\tau_*<1$
the \emph{uniformly transversal regime}: the normal
momentum stays bounded away from zero. The limit
$\tau\to1$ is the \emph{glancing regime}, in which
the normal momentum vanishes and the radial turning
point approaches the boundary. More generally, at
energy $E$, the relevant ratio is $\tau/\sqrt E$.

This distinction is illustrated by the disk eigenfunctions
discussed in
\cite[Section~1.3]{AnantharamanLeautaudMacia2016}.
Different asymptotic relations between the angular
and radial indices produce different limiting spatial
mass distributions, including whispering-gallery
concentration on the boundary when the radial index
is fixed and the angular index tends to infinity.

For our singular operators, this geometric picture
motivates the terminology. The inverse-square boundary
term is not uniformly negligible near $r=1$, and its
effect is treated explicitly by the endpoint estimates
above.
\end{remark}

We are now in a position to prove the uniform radial frequency gap \cref{eq:quant-gap}.

\begin{lemma}[Uniform radial frequency gap]
\label{lem:uniform-radial-gap1}
For every fixed $\mu>1/2$, there exists $c_\mu>0$ such that
\begin{equation}\label{eq:radial-gap1}
    \omega_{\nu,k+1}-\omega_{\nu,k}\geq c_\mu,
    \qquad \nu\in\mathcal V,\quad k\geq1.
\end{equation}
\end{lemma}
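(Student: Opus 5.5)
We argue by contradiction, following the energy-scale classification of \cite{HillairetMarzuola2023}. Suppose there are $\nu_j\in\mathcal V$ and $k_j\geq1$ with $\omega_j':=\omega_{\nu_j,k_j+1}$, $\omega_j:=\omega_{\nu_j,k_j}$ and $\omega_j'-\omega_j\to0$. The first step reduces to the high-frequency regime. For bounded $\omega_j$, the lower bound $\omega_{\nu,1}^2\geq\nu^2-\frac14$ (Hardy and Rayleigh--Ritz, as in \cref{rem:positive-kappa-failure}) forces $\nu_j$ to stay bounded. The counting bound \eqref{eq:positive-local-count} then bounds $k_j$. Passing to a subsequence, we may take $\nu_j\to\nu_\infty$ and $k_j=k$ fixed. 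Eigenvalues of $H_{\nu,\mu}$ depend continuously on $\nu$, by analytic perturbation of the forms on the common core $C_c^\infty(0,1)$, or more simply by monotonicity of the form in $b=\nu^2-\frac14$ together with min--max. They are also simple, so the limit gap $\omega_{\nu_\infty,k+1}-\omega_{\nu_\infty,k}$ is positive, which is a contradiction. The isolated sector $\nu=0$ is a single operator, so on it only $\omega\to\infty$ matters, and that case is included in the transversal analysis with $\tau=0$. Hence $\omega_j\to\infty$, so $h_j=\omega_j^{-1}\to0$ and $(\omega_j'/\omega_j)^2\to1$. The rescaled operator $Q_{h_j,\tau_j}$ then has two consecutive eigenvalues $1$ and $(\omega_j'/\omega_j)^2$, and we need their gap to be $\gtrsim h_j$, since $\omega'-\omega=\omega\bigl((\omega'/\omega)-1\bigr)\asymp h^{-1}\bigl((\omega'/\omega)^2-1\bigr)$.

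The second step treats the transversal regime, $\tau_j=h_j\nu_j\leq\tau_*<1$ along a subsequence. Here \cref{lem:transversal-family11} applies directly. Both eigenvalues lie in $K$ for large $j$, so their gap is at least $ch_j$, giving $\omega'-\omega\geq c/2$, which is a contradiction. The constant $\tau_*$ stays free for now and is fixed by the next step.

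The third step, the glancing regime $\tau_j\to1$, is the main obstacle. Note that $\tau\leq1$ always holds up to $O(h)$, because $1=h^2\omega^2\geq\tau^2-h^2/4$. I would first try to reuse \cref{lem:interior-matching-phase} with the matching point $x_0$ near the turning point $r_t=\tau$, which approaches $1$. The difficulty is that near $r=1$ the turning point and the inverse-square singularity $h^2a/(1-r)^2$ interact. I would rescale around the boundary with $y=1-r$ on the Airy scale $\varepsilon=h^{2/3}$, the analogue of $\varepsilon_\nu=b_\nu^{-1/3}$ used in \cref{rem:positive-kappa-failure}. Expanding $\tau^2/r^2\approx\tau^2(1+2y)$ gives a limiting model
\[
-\partial_s^2+\frac{a}{s^2}+2s-e
\]
on $(0,\infty)$, a singular Airy-type operator. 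This operator has compact resolvent, discrete simple spectrum and positive gaps. Eigenvalue differences of $Q_{h,\tau}$ near $1$ correspond to $h^{2/3}\cdot(\text{model gaps})$ in $e$ for whispering-gallery modes, which is $\gg h$. Alternatively, I would show directly that in this regime both mass ratios in \eqref{eq:matching-mass-assumptions} are $\lesssim1$ at a matching point $x_0=1-Lh^{2/3}$. On the left, this follows from the Rellich identity \cref{lem:rellich11}, since $2E-2W-rW'>0$ persists on $(0,x_0]$ for $L$ large. On the right, it follows from uniform resolvent and compactness bounds for the rescaled Airy model. The gap bound \eqref{eq:interior-matching-gap} then yields $E_{j+1}-E_j\geq ch$ or better. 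Making the right-side mass bound uniform as $\tau\to1$, with $e$ ranging over a window, is the step I expect to require the most care, including a compactness-plus-contradiction argument on the limit model in the style of \cite[Section~5]{HillairetMarzuola2023}. Combining the three regimes gives the constant $c_\mu$.
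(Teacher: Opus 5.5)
\emph{Assessment: genuine gap in Step 3.}

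Your Steps 1 and 2 are essentially the paper's. One small point: continuity of the eigenvalues in $\nu$ needs the Hardy relative bound $|\mathfrak h_\nu[v]-\mathfrak h_{\nu_*}[v]|\le 4|\nu^2-\nu_*^2|\int_0^1|v'|^2\,\dd r$; monotonicity in $b$ alone does not give it.

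The gap is in Step 3. You treat the whole glancing regime $\tau_j\to1$ as governed by a single Airy-scale model $-\partial_s^2+a/s^2+2s-e$ with layer width $h^{2/3}$. Along a hypothetical bad sequence, however, the model energy $e\approx(1-\tau_j^2)\,\omega_j^{2/3}$ need not stay bounded; for instance $1-\tau_j\sim\omega_j^{-1/3}$ gives $e\to\infty$. In that intermediate sub-regime:
the turning point sits at $s\approx e/2\to\infty$;
the Airy-scale rescalings of the eigenfunctions have no $L^2$-compact limit, since their mass escapes to infinity;
the model gaps shrink like $e^{-1/2}$, so the true gap of $Q_{h,\tau}$ is of order $h^{2/3}e^{-1/2}\asymp h(1-\tau^2)^{-1/2}$, interpolating between the transversal size $h$ and the whispering-gallery size $h^{2/3}$.

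Consequently none of your proposed tools gives the uniform boundary-side mass bound there. This includes the heuristic ``gap $\approx h^{2/3}\times$ model gap'', compactness-plus-contradiction on the limit model, and resolvent bounds for that fixed model. \Cref{lem:transversal-family11} does not apply either, because no fixed $\tau_*<1$ bounds $\tau_j$. As written, this sub-regime is not covered.

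\emph{The missing idea is a second scale.}
The paper normalizes by $h_j=\nu_j^{-1}$ and sets $\epsilon_j=h_j^2\omega_j^2-1$. It proves $\epsilon_j\ge c_ah_j^{2/3}$ by minimizing $2y+h_j^2a/y^2$. It then rescales by the distance to the turning point, $x=(1-r)/\epsilon_j$, rather than by $h^{2/3}$. This yields $P_j=-\eta_j^2\partial_x^2+\eta_j^2a/x^2+V_j$ with $\eta_j=h_j\epsilon_j^{-3/2}$ bounded and $V_j\to 2x$ in $C^1$. Your $e$ is comparable to $\eta_j^{-2/3}$.

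If $\eta_j\to\eta_\infty>0$ (your bounded-$e$ case), the compactness argument you have in mind works. Two orthonormal eigenfunctions converge to two orthonormal eigenfunctions of the operator $-\eta_\infty^2\partial_x^2+\eta_\infty^2a/x^2+2x$ at energy $1$, contradicting simplicity of that eigenvalue.

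If $\eta_j\to0$, the argument is semiclassical in $\eta_j$. Fix the matching point $x_0=2/5$. \Cref{lem:endpoint-transfer1} controls the mass on the singular side. The identity of \cref{lem:rellich11}, with $2V_j+xV_j'-2z\ge c_1>0$, controls the side containing the turning point. \Cref{lem:interior-matching-phase} then gives a gap $\ge c\eta_j$ for $P_j$, contradicting $(z_j'-1)/\eta_j\to0$.

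Your alternative with matching point $1-Lh^{2/3}$ can be repaired in the same spirit. Your Rellich bound on $(0,1-Lh^{2/3})$ is indeed uniform for all $\tau\le1$. For large $e$, however, the boundary-side mass bound must come from a semiclassical transfer estimate, e.g.\ \cref{lem:endpoint-transfer1} in the variable $s$ with semiclassical parameter $e^{-1/2}$. Compactness should be reserved for bounded $e$.
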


\begin{proof}
Suppose that the assertion fails.
Choose $\nu_j\in\mathcal V$ and $k_j\geq1$ such that
\begin{equation}\label{eq:gap-contradiction-sequence1}
    0<
    \omega_{\nu_j,k_j+1}-\omega_{\nu_j,k_j}
    <\frac1j.
\end{equation}
Write
\[
    \omega_j:=\omega_{\nu_j,k_j},
    \qquad
    \omega_j':=\omega_{\nu_j,k_j+1}.
\]
These are consecutive frequencies of the same operator
\[
    H_{\nu_j,\mu}
    =
    -\partial_r^2
    +\frac{\nu_j^2-\frac14}{r^2}
    +\frac{a}{(1-r)^2},
\]
with Friedrichs conditions at both endpoints. We divide the argument into several steps.

\smallskip
\noindent\emph{\textbf{Step 1:} Exclusion of bounded frequencies.}

We first prove that
\begin{equation}\label{eq:gap-frequencies-escape1}
    \omega_j\longrightarrow\infty.
\end{equation}
Suppose instead that $\omega_j\leq M$ along a subsequence.

If $\nu_j=0$ along an infinite subsequence, the fixed
operator $H_{0,\mu}$ has compact resolvent and simple
spectrum. In particular,
\[
    \omega_{0,k}^2\longrightarrow+\infty
    \qquad\text{as }k\to\infty.
\]
Hence $\omega_{0,k_j}\leq M$ forces $k_j$ to remain
bounded, and a further subsequence has $k_j=k_*$.
The gap then equals the fixed positive number
\[
    \omega_{0,k_*+1}-\omega_{0,k_*},
\]
a contradiction.

It remains to consider $\nu_j\geq1/2$.
For these parameters, the quadratic forms have common
domain $H_0^1(0,1)$:
\[
    \mathfrak h_\nu[v]
    =
    \int_0^1
    \left(
        |v'|^2
        +\frac{\nu^2-\frac14}{r^2}|v|^2
        +\frac{a}{(1-r)^2}|v|^2
    \right)\dd r.
\]
The endpoint Hardy inequalities justify this domain.
By the min--max principle,
\begin{equation}\label{eq:gap-bounded-frequency-minmax1}
    \omega_{\nu,k}^2
    \geq \pi^2k^2+\nu^2-\frac14,
    \qquad \nu\geq\frac12.
\end{equation}
Thus bounded $\omega_j$ forces both $k_j$ and $\nu_j$
to remain bounded. Passing to a further subsequence,
\[
    k_j=k_*,
    \qquad \nu_j\to\nu_*\geq\frac12.
\]

For $v\in H_0^1(0,1)$, Hardy's inequality gives
\[
    |\mathfrak h_{\nu_j}[v]-\mathfrak h_{\nu_*}[v]|
    \leq
    4|\nu_j^2-\nu_*^2|
    \int_0^1|v'|^2\,\dd r
    \leq
    d_j\,\mathfrak h_{\nu_*}[v],
\]
where $d_j:=4|\nu_j^2-\nu_*^2|\to0$.
For sufficiently large $j$,
\[
    (1-d_j)\mathfrak h_{\nu_*}
    \leq\mathfrak h_{\nu_j}
    \leq(1+d_j)\mathfrak h_{\nu_*}.
\]
The min--max principle consequently implies
\[
    \omega_{\nu_j,k_*}\to\omega_{\nu_*,k_*},
    \qquad
    \omega_{\nu_j,k_*+1}\to\omega_{\nu_*,k_*+1}.
\]
Simplicity of the limiting eigenvalues contradicts
the vanishing gap.
This proves \eqref{eq:gap-frequencies-escape1}.
In particular,
\[
    \omega_j\to\infty,
    \qquad \frac{\omega_j'}{\omega_j}\to1.
\]

\smallskip
\noindent\emph{\textbf{Step 2:} Exclusion of the transversal regime.}

Suppose that, along a subsequence,
\[
    \frac{\nu_j}{\omega_j}\leq\tau_*<1.
\]
Set
\[
    \widehat h_j:=\omega_j^{-1},
    \qquad
    \tau_j:=\frac{\nu_j}{\omega_j}.
\]
Then
\[
    \widehat h_j^2H_{\nu_j,\mu}
    =
    -\widehat h_j^2\partial_r^2
    +\frac{\tau_j^2-\widehat h_j^2/4}{r^2}
    +\frac{\widehat h_j^2a}{(1-r)^2},
\]
which belongs to the family in
\cref{lem:transversal-family11}.
Its eigenvalues with indices $k_j$ and $k_j+1$ are
consecutive and equal to
\[
    1,\qquad
    \left(\frac{\omega_j'}{\omega_j}\right)^2.
\]
They eventually lie in the fixed interval provided
by that lemma. Therefore
\[
    \left(\frac{\omega_j'}{\omega_j}\right)^2-1
    \geq c\widehat h_j.
\]
However,
\[
    \frac{
        (\omega_j'/\omega_j)^2-1
    }{\widehat h_j}
    =
    (\omega_j'-\omega_j)
    \left(\frac{\omega_j'}{\omega_j}+1\right)
    \longrightarrow0,
\]
a contradiction.

By \eqref{eq:gap-bounded-frequency-minmax1},
$\omega_{\nu,k}>\nu$ for $\nu\geq1/2$.
For $\nu=0$, the ratio $\nu/\omega_{\nu,k}$ is zero.
Since every subsequence on which $\nu_j/\omega_j$
stays below a fixed number less than one has been
excluded, we obtain
\begin{equation}\label{eq:gap-glancing-ratio1}
    \frac{\nu_j}{\omega_j}\longrightarrow1,
    \qquad \nu_j\longrightarrow\infty.
\end{equation}

\smallskip
\noindent\emph{\textbf{Step 3:} Angular normalization and boundary scaling.}

We now treat the remaining regime
\eqref{eq:gap-glancing-ratio1}.
Define
\begin{equation}\label{eq:normalised-angular1}
    h_j:=\nu_j^{-1},
    \qquad
    d_j:=1-\frac{h_j^2}{4},
    \qquad
    h_j^2\omega_j^2=1+\epsilon_j.
\end{equation}
Then $h_j\to0$, $\epsilon_j>0$, and $\epsilon_j\to0$.

We first establish the lower bound
\begin{equation}\label{Eq:bottom-lower1}
    \epsilon_j\geq c_a h_j^{2/3}
\end{equation}
for all sufficiently large $j$.
For $r\geq1/2$, put $y=1-r$.
Using $(1-y)^{-2}\geq1+2y$, we find
\begin{align*}
    \frac{d_j}{r^2}
    +\frac{h_j^2a}{(1-r)^2}-1
    &\geq
    2y+\frac{h_j^2a}{y^2}-h_j^2\\
    &\geq
    3a^{1/3}h_j^{2/3}-h_j^2.
\end{align*}
The last inequality follows by minimizing
$2y+h_j^2a/y^2$ over $y>0$.
For $r\leq1/2$, the same potential minus one is at least
\[
    4d_j-1=3-h_j^2.
\]
Thus the potential of $h_j^2H_{\nu_j,\mu}$ is bounded
below by $1+c_a h_j^{2/3}$.
The kinetic form is nonnegative, so the same lower
bound holds for its eigenvalues, proving
\eqref{Eq:bottom-lower1}.

Set
\begin{equation}\label{Eq:glancing-rescaling1}
    x:=\frac{1-r}{\epsilon_j},
    \qquad
    L_j:=\epsilon_j^{-1},
    \qquad
    \eta_j:=h_j\epsilon_j^{-3/2}.
\end{equation}
The unitary transformation
\[
    (U_jf)(x)
    :=\epsilon_j^{1/2}f(1-\epsilon_jx)
\]
gives
\begin{equation}\label{Eq:glancing-family1}
    P_j
    :=
    \frac{U_j(h_j^2H_{\nu_j,\mu})U_j^{-1}-1}{\epsilon_j}
    =
    -\eta_j^2\partial_x^2
    +\frac{\eta_j^2a}{x^2}
    +V_j(x),
\end{equation}
where
\[
    V_j(x)
    :=
    \frac{d_j(1-\epsilon_jx)^{-2}-1}{\epsilon_j},
    \qquad 0<x<L_j.
\]
The Friedrichs conditions are preserved.
The two consecutive eigenvalues become
\begin{equation}\label{Eq:glancing-two-energies1}
    1,\qquad
    z_j'
    :=
    1+\frac{h_j^2(\omega_j'^2-\omega_j^2)}{\epsilon_j}.
\end{equation}

By \eqref{Eq:bottom-lower1},
\[
    0<\eta_j\leq c_a^{-3/2},
    \qquad
    \frac{h_j^2}{\epsilon_j}\longrightarrow0.
\]
Furthermore,
\begin{equation}\label{Eq:glancing-gap-ratio1}
    \frac{z_j'-1}{\eta_j}
    =
    h_j\sqrt{\epsilon_j}
    (\omega_j'+\omega_j)(\omega_j'-\omega_j)
    \longrightarrow0,
\end{equation}
because $h_j\omega_j\to1$, $h_j\omega_j'\to1$,
and $\omega_j'-\omega_j\to0$.
Since $\eta_j$ is bounded, this also implies $z_j'\to1$.

Passing to a subsequence, we may assume
$\eta_j\to\eta_\infty\geq0$.
We consider the two possible cases separately.

\smallskip
\noindent\emph{\textbf{Step 4:} The case $\eta_j\to0$.}
\smallskip
In this regime, $P_j$ has the local form considered
in \cref{lem:endpoint-transfer1} near $x=0$, with
kinetic parameter $\eta_j\to0$.
To apply \cref{lem:interior-matching-phase}, it remains
to establish the required $L^2$ mass bounds for the
left and right principal solutions in terms of their
Cauchy data at a fixed interior matching point.

Fix
\[
    K:=\left[\frac{19}{20},\frac{21}{20}\right],
    \qquad x_0:=\frac25.
\]
Since $h_j^2/\epsilon_j\to0$, the formula for $V_j$
gives
\[
    V_j\longrightarrow2x
    \quad\text{in }C^1([0,x_0]).
\]
In particular, for sufficiently large $j$,
\[
    \|V_j\|_{C^1([0,x_0])}\leq C,
    \qquad
    z-V_j(x)\geq\frac{3}{40},
    \qquad z\in K,\quad 0\leq x\leq x_0,
\]
because $\inf K-2x_0=3/20$.

For $z\in K$, let $f_{\mathrm L,j}(\cdot;z)$ and
$f_{\mathrm R,j}(\cdot;z)$ be the normalized principal
solutions at $x=0$ and $x=L_j$, respectively.
The preceding bounds verify the assumptions of
\cref{lem:endpoint-transfer1} on $(0,x_0)$, with
$\rho=\mu$, kinetic parameter $\eta_j$, and regular
potential $V_j$. Hence
\begin{equation}\label{Eq:glancing-near-mass1}
    \int_0^{x_0}f_{\mathrm L,j}^2\,\dd x
    \leq C\left(
        |f_{\mathrm L,j}(x_0;z)|^2
        +\eta_j^2|f_{\mathrm L,j}'(x_0;z)|^2
    \right).
\end{equation}

For the right principal solution, we use the same
auxiliary-function argument as in the proof of
\cref{lem:transversal-family11}.
Here
\begin{align}
    2V_j+xV_j'-2z
    &=
    \frac{2d_j(1-\epsilon_jx)^{-3}-2}{\epsilon_j}
    -2z \notag\\
    &\geq
    6d_jx-\frac{h_j^2}{2\epsilon_j}-2z
    \geq c_1>0,
    \qquad x_0\leq x<L_j,\quad z\in K,
    \label{Eq:glancing-rellich-sign1}
\end{align}
uniformly for sufficiently large $j$.
The last inequality follows from
$d_j\to1$, $h_j^2/\epsilon_j\to0$, and
$6x_0-2\sup K=3/10$.

Let $F_j$ be the auxiliary function
\eqref{eq:rellich-flux1} with
\[
    h=\eta_j,\qquad g=\eta_j^2a,\qquad
    W=V_j,\qquad E=z,\qquad v=f_{\mathrm R,j}.
\]
Since $x=L_j$ corresponds to $r=0$, the principal
exponent there is $s_j=\frac12+\nu_j>1$ for large $j$.
Its endpoint asymptotics imply that $F_j(x)\to0$
as $x\uparrow L_j$ and that $F_j'$ is integrable there.
Thus
\[
    F_j(x_0)
    =
    \int_{x_0}^{L_j}
        (2V_j+xV_j'-2z)f_{\mathrm R,j}^2\,\dd x
    \geq
    c_1\int_{x_0}^{L_j}f_{\mathrm R,j}^2\,\dd x.
\]
The coefficients of $F_j(x_0)$, expressed in terms
of $f_{\mathrm R,j}(x_0;z)$ and
$\eta_jf_{\mathrm R,j}'(x_0;z)$, are uniformly bounded.
The same Cauchy-data estimate as before therefore gives
\begin{equation}\label{Eq:glancing-far-mass}
    \int_{x_0}^{L_j}f_{\mathrm R,j}^2\,\dd x
    \leq C\left(
        |f_{\mathrm R,j}(x_0;z)|^2
        +\eta_j^2|f_{\mathrm R,j}'(x_0;z)|^2
    \right).
\end{equation}

The principal-solution properties verify the remaining
assumptions of \cref{lem:interior-matching-phase}.
Together with \eqref{Eq:glancing-near-mass1} and
\eqref{Eq:glancing-far-mass}, that lemma yields a gap
of at least $c\eta_j$ between consecutive eigenvalues
in $K$.

Since $1$ and $z_j'$ are consecutive eigenvalues
lying in $K$ for sufficiently large $j$, we obtain
\[
    z_j'-1\geq c\eta_j,
\]
contradicting \eqref{Eq:glancing-gap-ratio1}.

\smallskip
\noindent\emph{\textbf{Step 5:} The case $\eta_j\to\eta_\infty>0$.}

One could establish compact spectral convergence of
$P_j$ to the limiting operator $P_\infty$ within the
framework of \cite{KuwaeShioya2003} and obtain a
contradiction from the stability of spectral
multiplicities in \cite[Theorem~2.6]{KuwaeShioya2003}.
Here we give a more direct argument based on
compactness of the corresponding eigenfunctions to show that two consecutive
eigenvalues approaching one would produce two
independent eigenfunctions at the same limiting energy.

Choose real eigenfunctions $v_j,w_j$ of $P_j$ at
eigenvalues $1,z_j'$, respectively, normalized by
\[
    \|v_j\|_{L^2(0,L_j)}
    =\|w_j\|_{L^2(0,L_j)}=1,
    \qquad
    \langle v_j,w_j\rangle_{L^2(0,L_j)}=0.
\]
For sufficiently large $j$, the inverse-square
coefficients at both endpoints of $P_j$ are positive.
Its Friedrichs form domain is therefore $H_0^1(0,L_j)$,
by the endpoint Hardy inequalities. 

Extend $v_j,w_j$ by zero to $(0,\infty)$, so that
functions defined on the varying intervals $(0,L_j)$
can be compared in a common space.
Since $v_j,w_j\in H_0^1(0,L_j)$, their zero extensions
belong to $H_0^1(0,\infty)$ and preserve both their
$L^2$ norms and the $L^2$ norms of their derivatives.

Using $(1-\epsilon_jx)^{-2}\geq1+2\epsilon_jx$, we have
\begin{equation}\label{Eq:glancing-confinement1}
    V_j(x)
    \geq2d_jx-\beta_j
    \geq x-\beta_j,
    \qquad
    \beta_j:=\frac{h_j^2}{4\epsilon_j}\longrightarrow0.
\end{equation}
Pairing $P_jv_j=v_j$ with $v_j$ in $L^2(0,L_j)$
and using the quadratic form associated with the
Friedrichs realization, together with
$\|v_j\|_{L^2(0,L_j)}=1$, gives
\[
    \eta_j^2\int_0^{L_j}|v_j'|^2\,\dd x
    +\eta_j^2a\int_0^{L_j}\frac{|v_j|^2}{x^2}\,\dd x
    +\int_0^{L_j}V_j|v_j|^2\,\dd x
    =1.
\]
Since $\eta_j$ is bounded above and below by positive
constants, \eqref{Eq:glancing-confinement1} implies
\[
    \int_0^\infty
    \left(
        |v_j'|^2+\frac{a}{x^2}|v_j|^2+x|v_j|^2
    \right)\dd x\leq C.
\]
The same estimate holds for $w_j$, since $z_j'\to1$.

By the compact embedding
$H^1(0,R)\hookrightarrow L^2(0,R)$ for every finite
$R>0$; see \cite[Section~5.7, Theorem~1 and the
subsequent remark]{Evans2010}, a diagonal subsequence
argument yields
\[
    v_j\to v,\qquad w_j\to w
    \quad\text{strongly in }L^2(0,R)
\]
for every finite $R>0$, and weakly in
$H_0^1(0,\infty)$.
Moreover,
\[
    \int_R^\infty|v_j|^2\,\dd x\leq\frac CR,
    \qquad
    \int_R^\infty|w_j|^2\,\dd x\leq\frac CR.
\]
These uniform tail estimates imply strong convergence
on the full half-line:
\[
    v_j\to v,\qquad w_j\to w
    \quad\text{in }L^2(0,\infty).
\]
Hence
\begin{equation}\label{Eq:glancing-limit-orthogonality1}
    \|v\|_{L^2}=\|w\|_{L^2}=1,
    \qquad \langle v,w\rangle_{L^2}=0.
\end{equation}

We identify the limiting operator as
\[
    P_\infty
    :=
    -\eta_\infty^2\partial_x^2
    +\frac{\eta_\infty^2a}{x^2}+2x
\]
with its Friedrichs realization on $(0,\infty)$.
Its form domain is
\[
    \mathcal Q_\infty
    :=
    \left\{
        f\in H_0^1(0,\infty):
        \int_0^\infty
        \left(\frac{a}{x^2}+x\right)|f|^2\,\dd x<\infty
    \right\}.
\]
The uniform bounds and lower semicontinuity imply
$v,w\in\mathcal Q_\infty$.

For $\varphi\in C_c^\infty(0,\infty)$, the eigenvalue
equation for $v_j$ gives, for sufficiently large $j$,
\[
    \eta_j^2\int v_j'\varphi'\,\dd x
    +\eta_j^2a\int x^{-2}v_j\varphi\,\dd x
    +\int V_jv_j\varphi\,\dd x
    =\int v_j\varphi\,\dd x.
\]
On the support of $\varphi$, $V_j\to2x$ uniformly.
Passing to the limit yields
\[
    \eta_\infty^2\int v'\varphi'\,\dd x
    +\eta_\infty^2a\int x^{-2}v\varphi\,\dd x
    +2\int xv\varphi\,\dd x
    =\int v\varphi\,\dd x.
\]
The same identity holds for $w$, because $z_j'\to1$.
Since $C_c^\infty(0,\infty)$ is a core for the
Friedrichs form, these identities extend to all
test functions in $\mathcal Q_\infty$.
The representation theorem for closed forms therefore gives
\begin{equation}\label{Eq:airy-bessel-limit1}
    v,w\in D(P_\infty),
    \qquad P_\infty v=v,\qquad P_\infty w=w.
\end{equation}

Finally, the Friedrichs condition at zero selects a
one-dimensional solution space for $P_\infty f=f$:
every such solution is a multiple of the principal
solution
\[
    f(x)=x^{1/2+\mu}(1+O(x^2)).
\]
Thus two Friedrichs eigenfunctions at the same energy
are proportional near zero, and uniqueness for the
interior ordinary differential equation extends this
proportionality to the whole half-line.
The eigenvalue $1$ therefore has multiplicity at most one.
This contradicts
\eqref{Eq:glancing-limit-orthogonality1} and
\eqref{Eq:airy-bessel-limit1}.

Both possibilities for $\eta_\infty$ have been excluded.
Hence no sequence of consecutive frequencies can have
gap tending to zero, and \eqref{eq:radial-gap1} follows.
\end{proof}

For completeness, the signed sets also have a uniform gap
across zero. By \eqref{eq:gap-bounded-frequency-minmax1},
$\omega_{\nu,1}\geq\pi$ when $\nu\geq1/2$.
For $\nu=0$, comparison with the critical Bessel
operator gives $\omega_{0,1}\geq j_{0,1}$.
Thus, with $\omega_*:=\min\{\pi,j_{0,1}\}>0$,
\[
  \operatorname{gap}(\Lambda_\nu)
  \geq\min\{c_\mu,2\omega_*\}>0,
  \qquad \nu\in\mathcal V.
\]
Together with \eqref{eq:quant-count}, this proves
\cref{prop:spectral-core}.

\appendix

\section{The observation domain and the lower time obstruction}
\label{sec:observation-domain}

In this section, we justify the boundary trace construction used above and
characterize its maximal $L^2$ observation domain. No bounded observation map on the entire energy space is assumed. For a general energy solution, the weighted Neumann trace is initially defined as a distribution. We characterize the initial states for which this trace belongs to $L^2$
on the observation interval.
Throughout this section, put
\[
    p=\mu+\frac12=1-\kappa, \qquad\alpha=\frac{n-1}{2},
    \qquad
    \mathcal V=\{0\}\cup[1/2,\infty).
\]
Recall that the radial eigenfunctions are normalized in
$L^2(0,1)$ and satisfy
\[
    \phi_{\nu,k}(1-y)
    =
    c_{\nu,k}y^p(1+O(y^2)),
    \qquad
    \gamma_{\nu,k}=-2\mu c_{\nu,k}.
\]
The arguments below apply to every fixed $\kappa<0$, including $\kappa\leq-1/2$. Before constructing the weak Neumann trace and
characterizing the maximal observation domain, we establish polynomial bounds for the boundary coefficients and the weighted interior traces.

\begin{lemma}[Polynomial bounds for the radial traces]
\label{lem:norming-upper}
\label{lem:extended-polynomial-trace}
Fix $\mu>1/2$. Uniformly for $\nu\in\mathcal V$ and $k\geq1$,
\begin{equation}\label{eq:norming-upper}
    |\gamma_{\nu,k}|^2
    \leq C_\mu\omega_{\nu,k}^{2\mu+2}.
\end{equation}
Moreover, writing $y=1-r$ and
\[
    M_{\nu,k}(r)
    :=y^{2\kappa}\frac{d}{dr}
       \left(y^{-\kappa}r^{-\alpha}\phi_{\nu,k}(r)\right),
\]
we have
\begin{equation}\label{eq:extended-modal-trace-bound}
    \sup_{1/2\leq r<1}|M_{\nu,k}(r)|
    \leq C_{n,\mu}\omega_{\nu,k}^{\mu+1},
\end{equation}
and
\begin{equation}\label{eq:modal-trace-limit}
    \lim_{r\uparrow1}M_{\nu,k}(r)=\gamma_{\nu,k}.
\end{equation}
\end{lemma}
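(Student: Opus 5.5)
The plan is to work near the right endpoint and use the principal-solution estimates together with the transfer lemma \cref{lem:endpoint-transfer1} in a semiclassical rescaling, plus a uniform Agmon/energy bound on the eigenfunction away from the boundary.

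\textbf{Step 1: interior Cauchy data.} Fix $y_0\in(0,1/2)$. We bound $|\phi_{\nu,k}(1-y_0)|^2+\omega^{-2}|\phi_{\nu,k}'(1-y_0)|^2\lesssim 1$, uniformly in $\nu,k$. With $\omega=\omega_{\nu,k}$, normalization gives $\|\phi\|_{L^2}=1$ and $\|\phi'\|_{L^2}\leq\omega$ from the quadratic form, since all potentials are nonnegative for $\nu\geq1/2$; the sector $\nu=0$ is handled by the Hardy inequality. Averaging over $y_0\in[1/4,1/2]$ and using the ODE to control $\phi''$ by $\omega^2\phi$ plus bounded potential terms then gives a pointwise bound at some (hence, by Sobolev on a fixed interval, every) $y_0$ of order $\omega$ for $\phi'$ and $O(1)$ for $\phi$ after the $h=\omega^{-1}$ scaling. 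Crudely, $|\phi|\lesssim\omega^{1/2}$ and $|\phi'|\lesssim\omega^{3/2}$ suffice, at the cost of a fixed power of $\omega$.

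\textbf{Step 2: propagation to the singular endpoint.} On $(0,y_0)$ in the variable $x=y$, the equation reads $-\phi''+a x^{-2}\phi+W\phi=\omega^2\phi$, where $W=(\nu^2-\tfrac14)(1-x)^{-2}$ may be huge. Write $\phi=c\,\phi_\mu(x;\omega^2-W)$ in the principal normalization. To avoid needing $E-W>0$ (which fails in the glancing or evanescent regime), I would instead use a Volterra/Gronwall comparison at the scale $x\sim\omega^{-1}$. In $s=\omega x$, the principal solution satisfies $F(s)=s^p(1+O(s^2))$ on $s\leq s_*$ uniformly, \emph{provided} $(W-\omega^2)/\omega^2$ is bounded on $x\leq s_*/\omega$. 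This holds because $W\leq 4\nu^2\leq 4\omega^2$ by \eqref{eq:gap-bounded-frequency-minmax1}. Hence $c=\omega^{p}\,C_\nu$, where $|C_\nu|$ is controlled by the Cauchy data of $\phi$ at $x=s_*/\omega$ divided by $c_1\,\omega^{-?}$, via the transfer-matrix lower bound on $[s_*,L]$ as in the proof of \cref{lem:endpoint-transfer1}.

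The obstacle is going from $x=y_0$ down to $x\sim\omega^{-1}$ when $W>\omega^2$ there, where the solution can grow exponentially toward the boundary. In that region $\phi$ is forbidden on the inside, so it is monotone. I would bound $\phi$ and $\phi'$ on $[\omega^{-1},y_0]$ directly from the $L^2$ normalization plus $\|\phi'\|\leq\omega$ (interval length $\gtrsim\omega^{-1}$ and Sobolev with scale $\omega^{-1}$), which gives $|\phi|\lesssim\omega^{1/2}$ and $|\phi'|\lesssim\omega^{3/2}$ everywhere. This gives the Cauchy data at $x=s_*/\omega$ with $|F(s_*)|^2+|F'(s_*)|^2\gtrsim1$. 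It follows that $|c|\omega^{-p}\lesssim\omega^{1/2}$, so $|c|^2\lesssim\omega^{2p+1}=\omega^{2\mu+2}$, which is \eqref{eq:norming-upper} since $\gamma=-2\mu c$.

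\textbf{Step 3: the traces $M_{\nu,k}$.} Write $R=r^{-\alpha}\phi$. Then $M=y^{2\kappa}\partial_r(y^{-\kappa}R)$, and in the region $y\geq s_*/\omega$ the pointwise bounds $|\phi|\lesssim\omega^{1/2}$, $|\phi'|\lesssim\omega^{3/2}$ give $|M|\lesssim y^{\kappa}\omega^{3/2}+y^{\kappa-1}\omega^{1/2}\lesssim\omega^{3/2-\kappa}=\omega^{\mu+1}$, using $y^{\kappa}\leq(s_*/\omega)^{\kappa}$ since $\kappa<0$. In the region $y\leq s_*/\omega$, the expansion $\phi=c\,y^p(1+O(\omega^2y^2))$ with its differentiated version (uniform by the rescaled principal estimates) yields $M=-2\mu c+O(c\,\omega^2y^2)+O(c\,y)$, bounded by $|c|\lesssim\omega^{\mu+1}$. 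The same expansion gives \eqref{eq:modal-trace-limit} as $y\downarrow0$, using $y^{2\kappa}\partial_r(y^{-\kappa}y^{p})=-(p-\kappa)\cdot$ sign conventions $=-2\mu$, consistent with \eqref{eq:gamma}.

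I expect Step 2's uniform pointwise bounds in the forbidden or glancing region to be the main obstacle.
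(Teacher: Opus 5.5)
Your proof is correct in substance and gets the right exponents, but your bound on $c_{\nu,k}$ takes a different route from the paper's. The paper never uses Cauchy data for this step. It writes $\phi(1-s/\omega)=c\,\omega^{-p}F(s)$, where $F$ is the uniformly controlled rescaled principal solution, and chooses $s_0$ with $|F(s)|\geq s^p/2$ on $(0,s_0]$. It then uses only the normalization restricted to the boundary layer, $1\geq\int_{1-s_0/\omega}^1|\phi|^2\,\dd r\gtrsim|c|^2\omega^{-2p-1}$. No derivative bounds and no matching at a point are needed.

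You instead bound $\phi$ and $\phi'$ pointwise at $y=s_*/\omega$ and divide by $|F(s_*)|^2+|F'(s_*)|^2\gtrsim1$. That lower bound is immediate from the uniform endpoint asymptotics; no transfer-matrix argument is required. Your route costs one extra ingredient: the pointwise bounds $|\phi|\lesssim\omega^{1/2}$ and $|\phi'|\lesssim\omega^{3/2}$ for $r\geq1/2$. Both proofs need these anyway for $M_{\nu,k}$ in the region $\omega y\geq s_0$, and your Step 3 treatment of both regions and of the limit agrees with the paper's. The paper obtains these bounds from a purely local ODE estimate on intervals $I_r$ of length $\sim\omega^{-1}$, namely $|\phi(r)|^2+\omega^{-2}|\phi'(r)|^2\leq C\omega\int_{I_r}|\phi|^2$, which uses only $L^2$ mass.

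Your derivation from the global bound $\|\phi'\|_{L^2(0,1)}\leq\omega$ is fine for $\nu\geq1/2$, but it fails as stated for $\nu=0$. There $\phi\sim r^{1/2}$ at the origin, so $\phi'\notin L^2(0,1)$, and the critical Hardy inequality gives no control of $\|\phi'\|_{L^2}$. You must localize, for instance by multiplying the equation by $\chi^2\phi$ with $\chi$ supported in $r\geq1/4$, which gives $\int\chi^2|\phi'|^2\lesssim\omega^2$. Alternatively, use the paper's local estimate, which handles all $\nu\in\mathcal V$ at once.

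Two smaller points about Step 1. The claim that the Cauchy data at a fixed $y_0$ are $O(1)$ uniformly is not established by your averaging sketch, and it need not hold near turning points. The discussion of propagating from $y_0$ through a forbidden region is also unnecessary. Neither is used in the argument you actually run, so both can simply be dropped.
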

\begin{proof}
Abbreviate $\omega=\omega_{\nu,k}$, $b=\nu^2-\frac14$,
$c=c_{\nu,k}$, and $\phi=\phi_{\nu,k}$.
The frequency lower bounds give (see \cref{eq:gap-bounded-frequency-minmax1})
\[
    -\frac14\leq b\leq\omega^2,
    \qquad
    \omega\geq\omega_*:=\min\{\pi,j_{0,1}\}>0,
    \qquad |b|\leq C\omega^2.
\]
Under $s=\omega(1-r)$, the equation becomes
\[
    -f''+\frac{a}{s^2}f+W_{\nu,\omega}(s)f=f,
    \qquad
    W_{\nu,\omega}(s)
    :=\frac{b/\omega^2}{(1-s/\omega)^2}.
\]
For fixed $d_*<\omega_*/2$, the potentials
$W_{\nu,\omega}$ are uniformly bounded on $[0,d_*]$.
The normalized principal solution $F=F_{\nu,k}$ therefore
satisfies, uniformly in $\nu,k$,
\[
    F(s)=s^p(1+O_\mu(s^2)),
    \qquad
    F'(s)=ps^{p-1}+O_\mu(s^{p+1}).
\]
Choose a fixed $s_0\in(0,d_*]$ such that
$|F(s)|\geq s^p/2$ for $0<s\leq s_0$.
By principal-solution uniqueness,
$\phi(1-s/\omega)=c\omega^{-p}F(s)$.
Normalization then gives
\[
    1\geq
    \int_{1-s_0/\omega}^{1}|\phi(r)|^2\,\dd r
    =
    |c|^2\omega^{-2p-1}\int_0^{s_0}|F(s)|^2\,\dd s
    \geq
    \frac{s_0^{2p+1}}{4(2p+1)}
    |c|^2\omega^{-2p-1}.
\]
Thus $|c|\leq C_\mu\omega^{p+1/2}
=C_\mu\omega^{\mu+1}$, proving
\eqref{eq:norming-upper} since $\gamma_{\nu,k}=-2\mu c$.

To estimate the interior trace, expand
\[
    M_{\nu,k}(r)
    =
    r^{-\alpha}
    \left(
        y^\kappa\phi'
        +\kappa y^{\kappa-1}\phi
        -\frac{\alpha}{r}y^\kappa\phi
    \right).
\]
If $s=\omega y\leq s_0$, the rescaled representation
and $p+\kappa=1$ yield
\[
    M_{\nu,k}(r)
    =
    cr^{-\alpha}
    \left(
        -s^\kappa F'(s)
        +\kappa s^{\kappa-1}F(s)
        -\frac{\alpha}{r\omega}s^\kappa F(s)
    \right).
\]
The principal estimates imply
$|M_{\nu,k}(r)|\leq C_{n,\mu}|c|$ and, for each fixed mode,
\[
    \lim_{r\uparrow1}M_{\nu,k}(r)
    =(\kappa-p)c=-2\mu c=\gamma_{\nu,k}.
\]
This proves \eqref{eq:modal-trace-limit} and the required
bound in the endpoint region.

For $\omega y\geq s_0$ and $r\geq1/2$, fix
$0<\delta<\min\{s_0/4,\omega_*/8\}$ and set
$I_r=(r-\delta/\omega,r+\delta/\omega)$.
Then $I_r\subset(1/4,1)$ and
$1-t\geq(s_0-\delta)/\omega$ for $t\in I_r$, so
\[
    \sup_{t\in I_r}
    \left|
        \frac{b}{t^2}+\frac{a}{(1-t)^2}-\omega^2
    \right|
    \leq C_\mu\omega^2.
\]
Rescaling to a fixed interval and applying the interior
ODE estimate gives
\[
    |\phi(r)|^2+\omega^{-2}|\phi'(r)|^2
    \leq C_\mu\omega\int_{I_r}|\phi(t)|^2\,\dd t
    \leq C_\mu\omega.
\]
Since $\kappa<0$ and $y\geq s_0/\omega$,
\[
    y^\kappa\leq C_\mu\omega^{-\kappa},
    \qquad
    y^{\kappa-1}\leq C_\mu\omega^{1-\kappa}.
\]
Substituting these bounds into the expression for
$M_{\nu,k}$, and using $\omega\geq\omega_*$, yields
\[
    |M_{\nu,k}(r)|
    \leq C_{n,\mu}\omega^{3/2-\kappa}
    =C_{n,\mu}\omega^{\mu+1}.
\]
Together with the endpoint-region estimate, this proves
\eqref{eq:extended-modal-trace-bound}.
\end{proof}
\subsection{The distributional Neumann trace}

We now construct the weak trace for every energy
solution, without assuming that it belongs to $L^2$.
For a finite spectral-sum solution $v$, define
\[
    (\N_{\kappa,r}^{\,0}v)(t,\vartheta)
    :=
    (1-r)^{2\kappa}\partial_r
    \bigl((1-r)^{-\kappa}v(t,r,\vartheta)\bigr),
    \qquad \frac12<r<1,
\]
where $v$ is written in polar coordinates.

\begin{proposition}[Distributional Neumann trace]
\label{prop:extended-weak-trace}
For each $r\in(1/2,1)$, the map
\[
    (v(0),\partial_tv(0))
    \longmapsto \N_{\kappa,r}^{\,0}v,
\]
initially defined on finite spectral initial states,
extends uniquely to a continuous linear map from
$\mathcal H_\kappa$ into
$\mathcal D'(\mathbb R\times\Sph^{n-1})$.
For an energy solution $u$, denote the value of this
extension at $(u(0),\partial_tu(0))$ by $\N_{\kappa,r}u$.
If $u$ is $C^1$ in an interior collar, this extended trace
agrees on every sphere with the classical weighted radial derivative.

The limit
\begin{equation}\label{eq:weak-neumann-trace}
    \N_\kappa u
    :=
    \lim_{r\uparrow1}\N_{\kappa,r}u
\end{equation}
exists in
$\mathcal D'(\mathbb R\times\Sph^{n-1})$
and is given by
\begin{equation}\label{eq:extended-trace-series}
    \N_\kappa u
    =
    \sum_{\ell,m,k}\gamma_{\nu_\ell,k}
    \left(
        a^+_{\ell,m,k}e^{i\omega_{\nu_\ell,k}t}
        +
        a^-_{\ell,m,k}e^{-i\omega_{\nu_\ell,k}t}
    \right)Y_{\ell,m},
\end{equation}
where $a^\pm_{\ell,m,k}$ are the modal coefficients
of $u$ defined by \eqref{eq:apm-from-initial-data},
and the series converges in distributions.

The map from initial data to the boundary trace is
continuous from $\mathcal H_\kappa$ into
$\mathcal D'(J\times\Sph^{n-1})$
for every bounded open time interval $J$.
Moreover, for every $\rho\in C_c^\infty(\mathbb R)$,
\[
    \N_\kappa(\rho*_t u)
    =
    \rho*_t(\N_\kappa u)
    \quad\text{in }
    \mathcal D'(\mathbb R\times\Sph^{n-1}).
\]
\end{proposition}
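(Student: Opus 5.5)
The plan is to work entirely with the modal expansion and pair against tensor-product test functions. For a finite spectral initial state, the solution is the finite sum \eqref{eq:cos-sin-full-wave}, and for each fixed $r\in(1/2,1)$ we have
\[
    \N_{\kappa,r}^{\,0}v(t,\vartheta)
    =\sum_{\ell,m,k}M_{\nu_\ell,k}(r)\bigl(a^+_{\ell,m,k}e^{i\omega_{\nu_\ell,k}t}+a^-_{\ell,m,k}e^{-i\omega_{\nu_\ell,k}t}\bigr)Y_{\ell,m}(\vartheta),
\]
with $M_{\nu,k}$ as in \cref{lem:extended-polynomial-trace}. Fix $\psi\in C_c^\infty(\mathbb R\times\Sph^{n-1})$ and expand $\psi(t,\vartheta)=\sum_{\ell,m}\psi_{\ell,m}(t)Y_{\ell,m}(\vartheta)$. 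Pairing then produces the coefficients
\[
    \overline{\widehat{\psi_{\ell,m}}(\mp\omega_{\nu_\ell,k})}
\]
(up to the $2\pi$ convention). Since $\psi$ is smooth in both variables, integration by parts in $t$ and in $\Delta_{\Sph^{n-1}}$ gives
\[
    |\widehat{\psi_{\ell,m}}(\xi)|\leq C_{N,\psi}(1+|\xi|)^{-N}(1+\ell)^{-N}
\]
for every $N$. The constant depends only on finitely many seminorms of $\psi$, with $t$-support in a fixed compact set.

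The core estimate is the following. By \eqref{eq:extended-modal-trace-bound}, $|M_{\nu,k}(r)|\leq C\omega_{\nu,k}^{\mu+1}$ uniformly in $r\in[1/2,1)$, and Cauchy--Schwarz gives
\[
    |\langle \N_{\kappa,r}^{\,0}v,\psi\rangle|
    \leq C\Bigl(\sum\omega^2(|a^+|^2+|a^-|^2)\Bigr)^{1/2}
    \Bigl(\sum_{\ell,m,k}\omega_{\nu_\ell,k}^{2\mu}(1+\omega_{\nu_\ell,k})^{-2N}(1+\ell)^{-2N}d_\ell\Bigr)^{1/2}.
\]
The first factor is $\E_\kappa[v](0)^{1/2}$ by \eqref{eq:modal-energy}. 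The second is finite for large $N$: the counting bound \eqref{eq:quant-count} gives at most $\omega/\pi+C_\mu$ frequencies below $\omega$ in each channel, and $d_\ell$ grows polynomially. This bound is uniform in $r$ and involves finitely many seminorms of $\psi$ on a fixed compact set. It therefore gives continuity from $\mathcal H_\kappa$ into $\mathcal D'$, uniformly in $r$, and density of finite spectral states yields the unique extension $\N_{\kappa,r}u$, represented by the same series (now convergent in distributions). The same bound with $|\gamma_{\nu,k}|\leq C\omega^{\mu+1}$ from \eqref{eq:norming-upper} shows that the series \eqref{eq:extended-trace-series} defines a distribution depending continuously on the data. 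Restricting to a bounded interval $J$ is immediate.

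For the limit $r\uparrow1$, I would use dominated convergence in the absolutely convergent modal pairing series. Each term converges because $M_{\nu,k}(r)\to\gamma_{\nu,k}$ by \eqref{eq:modal-trace-limit}, and the terms are dominated, uniformly in $r$, by the summable bound above. This gives \eqref{eq:weak-neumann-trace} and \eqref{eq:extended-trace-series}.

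Agreement with the classical derivative for solutions that are $C^1$ in a collar is the step needing the most care. I would prove it by testing: for $u$ that is $C^1$ near the sphere $\{|x|=r\}$, the classical weighted derivative paired with $\psi$ is obtained by integrating $u$ against $\psi(t,\vartheta)\,\partial_r$-type kernels localized near radius $r$. Concretely, I would mollify in $r$ with $\chi_\varepsilon(r-s)$ and compare. The mollified pairing is a continuous functional of $u$ in $L^2_{t,x}$, hence of the energy data. It agrees with the modal formula on finite sums, so it agrees for all data, and letting $\varepsilon\to0$ uses the $C^1$ hypothesis.

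Finally, time convolution commutes with everything. The map $u\mapsto\rho*_tu$ multiplies $a^\pm_{\ell,m,k}$ by $\widehat\rho(\mp\omega_{\nu_\ell,k})$ up to normalization, which is consistent with the modal series. Alternatively, one checks it on finite sums and extends by continuity, since $\rho*_t$ is continuous on $\mathcal D'$ and $\rho*_tu$ is again an energy solution whose data depend continuously on those of $u$.
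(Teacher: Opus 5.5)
Your proposal is correct and follows essentially the same route as the paper: a modal expansion of the interior trace, the uniform bound $|M_{\nu,k}(r)|\lesssim\omega_{\nu,k}^{\mu+1}$ combined with time integration by parts and Cauchy--Schwarz against the energy, extension by density, a termwise passage $M_{\nu,k}(r)\to\gamma_{\nu,k}$ as $r\uparrow1$, and a finite-sum check for time convolution. The differences are only technical. The paper needs no angular smoothness of $\psi$: it bounds by $\|\partial_t^N\psi_{\ell,m}\|_{L^2(J)}$, uses $\omega_{\nu,k}\geq c_\mu k$, and sums via Parseval on the sphere. It also uses an approximation argument where you use dominated convergence and mollification in $r$; for the mollification step you also need continuity of $r\mapsto\langle\N_{\kappa,r}u,\psi\rangle$, which your dominated-convergence argument supplies.
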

\begin{proof}
We first establish a uniform estimate for the classical
interior traces of finite spectral-sum solutions.
We then construct their continuous extensions, prove
the existence of the boundary limit, and verify the
commutation property.

\smallskip
\noindent\emph{Step 1: A uniform estimate for finite spectral sums.}

Let $v$ be a finite spectral-sum solution, with modal
coefficients $b^\pm_{\ell,m,k}$. By the definition of
$M_{\nu,k}(r)$,
\[
    \N_{\kappa,r}^{\,0}v
    =
    \sum_{\ell,m,k}M_{\nu_\ell,k}(r)
    \left(
        b^+_{\ell,m,k}e^{i\omega_{\nu_\ell,k}t}
        +
        b^-_{\ell,m,k}e^{-i\omega_{\nu_\ell,k}t}
    \right)Y_{\ell,m}.
\]
All sums involving $v$ are finite.

Fix a bounded open time interval $J$ and
$\psi\in C_c^\infty(J\times\Sph^{n-1})$.
Write
\[
    \psi_{\ell,m}(t)
    :=
    \int_{\Sph^{n-1}}
    \psi(t,\vartheta)\overline{Y_{\ell,m}(\vartheta)}
    \,\dd\vartheta.
\]
We write distributional pairings with complex conjugation
in the test function. Angular orthogonality gives
\[
    \langle\N_{\kappa,r}^{\,0}v,\psi\rangle
    =
    \sum_{\ell,m,k,\pm}
    M_{\nu_\ell,k}(r)b^\pm_{\ell,m,k}
    \int_{\mathbb R}
    e^{\pm i\omega_{\nu_\ell,k}t}
    \overline{\psi_{\ell,m}(t)}\,\dd t.
\]
For every integer $N\geq0$, integration by parts in
time and Cauchy--Schwarz yield
\[
    \left|
        \int_{\mathbb R}
        e^{\pm i\omega t}
        \overline{\psi_{\ell,m}(t)}\,\dd t
    \right|
    \leq
    |J|^{1/2}\omega^{-N}
    \|\partial_t^N\psi_{\ell,m}\|_{L^2(J)}.
\]

We shall also use the uniform frequency bound
\[
    \omega_{\nu,k}\geq c_\mu k,
    \qquad \nu\in\mathcal V,\quad k\geq1.
\]
For $\nu\geq1/2$, this follows from
\eqref{eq:gap-bounded-frequency-minmax1}.
For $\nu=0$, comparison with the critical Bessel
operator gives $\omega_{0,k}\geq j_{0,k}$,
and the fixed-order Bessel zero asymptotics imply
the required bound.

Choose an integer $N>\mu+1/2$. Since $2\mu-2N<-1$,
\[
    \sup_{\nu\in\mathcal V}
    \sum_{k\geq1}\omega_{\nu,k}^{2\mu-2N}
    \leq
    C_{\mu,N}\sum_{k\geq1}k^{2\mu-2N}
    <\infty.
\]
Using \eqref{eq:extended-modal-trace-bound} and
Cauchy--Schwarz in the modal indices, we obtain
\begin{align*}
    |\langle\N_{\kappa,r}^{\,0}v,\psi\rangle|
    &\leq
    C_{J,n,\mu}
    \left[
        \sum_{\ell,m,k,\pm}
        \omega_{\nu_\ell,k}^2
        |b^\pm_{\ell,m,k}|^2
    \right]^{1/2}\\
    &\quad\times
    \left[
        \sum_{\ell,m,k}
        \omega_{\nu_\ell,k}^{2\mu-2N}
        \|\partial_t^N\psi_{\ell,m}\|_{L^2(J)}^2
    \right]^{1/2}.
\end{align*}
The factor arising from the two signs is absorbed
into the constant. Moreover,
\begin{align*}
    &\sum_{\ell,m,k}
    \omega_{\nu_\ell,k}^{2\mu-2N}
    \|\partial_t^N\psi_{\ell,m}\|_{L^2(J)}^2\\
    &\qquad\leq
    C_{\mu,N}
    \sum_{\ell,m}
    \|\partial_t^N\psi_{\ell,m}\|_{L^2(J)}^2\\
    &\qquad=
    C_{\mu,N}
    \|\partial_t^N\psi\|_{L^2(J\times\Sph^{n-1})}^2,
\end{align*}
where the last equality is Parseval's identity
on the sphere. Thus
\begin{equation}\label{eq:interior-trace-test-bound}
    |\langle\N_{\kappa,r}^{\,0}v,\psi\rangle|
    \leq
    C_{J,n,\mu,N}
    \E_\kappa[v](0)^{1/2}
    \|\partial_t^N\psi\|_{L^2(J\times\Sph^{n-1})},
\end{equation}
uniformly for $1/2<r<1$.

\smallskip
\noindent\emph{Step 2: Continuous extension to energy solutions.}

Let $u$ be an arbitrary energy solution, with coefficients
$a^\pm_{\ell,m,k}$, and let $u_j$ be its finite spectral
truncations over increasing finite index sets $F_j$
exhausting all modes.
Applying \eqref{eq:interior-trace-test-bound} to
$u_j-u_i$ gives
\[
    \sup_{1/2<r<1}
    \left|
        \left\langle
            \N_{\kappa,r}^{\,0}u_j
            -\N_{\kappa,r}^{\,0}u_i,\psi
        \right\rangle
    \right|
    \leq
    C_{J,n,\mu,N}
    \E_\kappa[u_j-u_i](0)^{1/2}
    \|\partial_t^N\psi\|_{L^2(J\times\Sph^{n-1})}.
\]
Since $u$ has finite energy,
$\E_\kappa[u_j-u_i](0)\to0$ as $i,j\to\infty$.
For each fixed $r$, define
\[
    \langle\N_{\kappa,r}u,\psi\rangle
    :=
    \lim_{j\to\infty}
    \langle\N_{\kappa,r}^{\,0}u_j,\psi\rangle.
\]
The convergence is uniform in $r$ after pairing with
any fixed test function.

Passing to the limit in
\eqref{eq:interior-trace-test-bound}, we obtain
\begin{equation}\label{eq:extended-interior-trace-test-bound}
    |\langle\N_{\kappa,r}u,\psi\rangle|
    \leq
    C_{J,n,\mu,N}
    \E_\kappa[u](0)^{1/2}
    \|\partial_t^N\psi\|_{L^2(J\times\Sph^{n-1})},
\end{equation}
uniformly for $1/2<r<1$.
This bound ensures continuity in the test function,
so $\N_{\kappa,r}u$ is a distribution.

The same estimate applied to differences shows that
the construction is independent of the choice of
finite spectral approximations and is continuous
with respect to the initial data in $\mathcal H_\kappa$.
Linearity follows from the construction.
Since finite spectral initial states are dense in
$\mathcal H_\kappa$, this continuous extension is unique.
For finite spectral solutions, it agrees with
$\N_{\kappa,r}^{\,0}$; hence we may use
$\N_{\kappa,r}$ for both.

We also identify this extension with the usual interior
derivative whenever that derivative can be restricted to
a sphere. Fix a compact radial collar
$[r_-,r_+]\subset(1/2,1)$ and a bounded time interval $J$.
Finite spectral truncations converge to $u$ in
$C(J;H_0^1(\B))$. On this collar, the weights and their
derivatives are bounded, so their weighted radial
derivatives converge in
$L^2(J\times(r_-,r_+)\times\Sph^{n-1})$ to the
distributional radial derivative of $u$ (the radial
Jacobian is comparable to one). For every test function
$\psi(t,\vartheta)$, the corresponding pairings are
continuous functions of $r$ for finite sums and converge
uniformly on the collar by
\eqref{eq:interior-trace-test-bound}. Thus the function
$r\mapsto\langle\N_{\kappa,r}u,\psi\rangle$ is the
continuous representative of the pairing with the
bulk weighted derivative, which initially holds for
almost every $r$. If $u$ is $C^1$ there, the classical
restriction is also continuous in $r$ and the two agree
on every interior sphere. In particular, vanishing of
$u$ in an open collar implies
$\N_{\kappa,r}u=0$ there.

\smallskip
\noindent\emph{Step 3: The boundary limit and its spectral representation.}

For each truncation $u_j$, set
\[
    g_j
    :=
    \sum_{(\ell,m,k)\in F_j}
    \gamma_{\nu_\ell,k}
    \left(
        a^+_{\ell,m,k}e^{i\omega_{\nu_\ell,k}t}
        +
        a^-_{\ell,m,k}e^{-i\omega_{\nu_\ell,k}t}
    \right)Y_{\ell,m}.
\]
Since $M_{\nu,k}(r)\to\gamma_{\nu,k}$ for each fixed
mode, the finite sums satisfy
\[
    \N_{\kappa,r}u_j\longrightarrow g_j
    \quad\text{in }
    \mathcal D'(\mathbb R\times\Sph^{n-1})
    \quad\text{as }r\uparrow1.
\]
Taking this limit in the estimates from Step~1 gives
\[
    |\langle g_j-g_i,\psi\rangle|
    \leq
    C_{J,n,\mu,N}
    \E_\kappa[u_j-u_i](0)^{1/2}
    \|\partial_t^N\psi\|_{L^2(J\times\Sph^{n-1})}.
\]
Thus $\langle g_j,\psi\rangle$ is a Cauchy sequence
for every test function. Define
\[
    \langle g,\psi\rangle
    :=
    \lim_{j\to\infty}\langle g_j,\psi\rangle.
\]
The corresponding uniform bound passes to the limit:
\begin{equation}\label{eq:boundary-trace-test-bound}
    |\langle g,\psi\rangle|
    \leq
    C_{J,n,\mu,N}
    \E_\kappa[u](0)^{1/2}
    \|\partial_t^N\psi\|_{L^2(J\times\Sph^{n-1})}.
\end{equation}
Hence $g$ is a distribution.
Since $g_j$ are the finite partial sums of the series
in \eqref{eq:extended-trace-series}, that series
represents $g$ in the distributional sense.

We now identify $g$ as the boundary limit of
$\N_{\kappa,r}u$. For every test function $\psi$,
\begin{align*}
    |\langle\N_{\kappa,r}u-g,\psi\rangle|
    &\leq
    |\langle\N_{\kappa,r}(u-u_j),\psi\rangle|\\
    &\quad+
    |\langle\N_{\kappa,r}u_j-g_j,\psi\rangle|
    +
    |\langle g_j-g,\psi\rangle|.
\end{align*}
By the interior estimate and the corresponding
estimate for the boundary series, the first and
third terms satisfy
\begin{align*}
    &|\langle\N_{\kappa,r}(u-u_j),\psi\rangle|
    +|\langle g_j-g,\psi\rangle|\\
    &\qquad\leq
    2C_{J,n,\mu,N}
    \E_\kappa[u-u_j](0)^{1/2}
    \|\partial_t^N\psi\|_{L^2(J\times\Sph^{n-1})},
\end{align*}
uniformly for $1/2<r<1$.
For fixed $j$, the middle term tends to zero as
$r\uparrow1$. Consequently,
\[
    \limsup_{r\uparrow1}
    |\langle\N_{\kappa,r}u-g,\psi\rangle|
    \leq
    2C_{J,n,\mu,N}
    \E_\kappa[u-u_j](0)^{1/2}
    \|\partial_t^N\psi\|_{L^2(J\times\Sph^{n-1})}.
\]
Letting $j\to\infty$ proves
\[
    \N_{\kappa,r}u\longrightarrow g
    \quad\text{in }
    \mathcal D'(\mathbb R\times\Sph^{n-1})
    \quad\text{as }r\uparrow1.
\]
Therefore $\N_\kappa u=g$, proving
\eqref{eq:weak-neumann-trace} and
\eqref{eq:extended-trace-series}.

The bound \eqref{eq:boundary-trace-test-bound},
applied to differences of solutions, also proves
the asserted continuity of the boundary trace
with respect to the energy initial data.

\smallskip
\noindent\emph{Step 4: Commutation with time convolution.}

Let $\rho\in C_c^\infty(\mathbb R)$.
For every finite spectral truncation $u_j$, direct
convolution of its finitely many time exponentials gives
\[
    \N_\kappa(\rho*_t u_j)
    =
    \rho*_t(\N_\kappa u_j).
\]
Indeed, convolution multiplies the coefficient of
$e^{\pm i\omega t}$ by $\widehat\rho(\pm\omega)$,
and taking the boundary trace multiplies the same
coefficient by $\gamma_{\nu,k}$.

Time convolution is continuous in the energy topology.
More precisely, energy conservation and the triangle
inequality give
\[
    \left\|
        \left(
            (\rho*_t(u_j-u))(0),
            \partial_t(\rho*_t(u_j-u))(0)
        \right)
    \right\|_{\mathcal H_\kappa}
    \leq
    \|\rho\|_{L^1(\mathbb R)}
    \left\|
        \bigl(u_j(0)-u(0),
              \partial_tu_j(0)-\partial_tu(0)\bigr)
    \right\|_{\mathcal H_\kappa}.
\]
Thus the left-hand side of the finite-sum identity
converges in distributions to
$\N_\kappa(\rho*_t u)$.

On the right-hand side,
$\N_\kappa u_j\to\N_\kappa u$ in distributions,
and convolution with a fixed smooth compactly
supported function is continuous in distributions.
Explicitly, with $\check\rho(t):=\rho(-t)$, our
pairing convention gives
\[
    \langle\rho*_t G,\psi\rangle
    =
    \langle G,\overline{\check\rho}*_t\psi\rangle.
\]
The transformed test function is again smooth
and compactly supported.
Passing to the limit in the finite-sum identity
therefore yields
\[
    \N_\kappa(\rho*_t u)
    =
    \rho*_t(\N_\kappa u)
    \quad\text{in }
    \mathcal D'(\mathbb R\times\Sph^{n-1}).
\]
\end{proof}

Whenever the weighted interior traces converge strongly
in $L^2$ on a boundary time cylinder, their limit agrees
with the trace in \cref{prop:extended-weak-trace}.
Thus this construction is consistent with the classical
weighted trace whenever the classical boundary passage
is justified.

\subsection{The closed and maximal observation domain}

Recall the definitions of $Q$, $\mathcal D_{\rm obs}$,
and $\mathcal U_{\rm obs}$ from the discussion of
spectral representations.
The lower norming estimate and
\eqref{eq:modal-energy} give
\[
    \E_\kappa[u](0)\leq C_\mu Q(u_0,u_1).
\]
Thus $Q^{1/2}$ controls the energy norm of the initial
state.

The upper estimate \eqref{eq:norming-upper}, together
with \eqref{eq:apm-from-initial-data}, yields
\begin{align*}
    Q(u_0,u_1)
    &\leq
    C_\mu\sum_{\ell,m,k}
    \omega_{\nu_\ell,k}^{2\mu+2}
    \bigl(
        |a^+_{\ell,m,k}|^2+
        |a^-_{\ell,m,k}|^2
    \bigr)\\
    &=
    \frac{C_\mu}{2}
    \sum_{\ell,m,k}
    \left(
        \omega_{\nu_\ell,k}^{2\mu+2}|U_{\ell,m,k}|^2
        +
        \omega_{\nu_\ell,k}^{2\mu}|V_{\ell,m,k}|^2
    \right).
\end{align*}
Consequently,
\[
    Q(u_0,u_1)
    \leq C_\mu
    \left(
        \|A_\kappa^{(\mu+1)/2}u_0\|_{L^2(\B)}^2
        +
        \|A_\kappa^{\mu/2}u_1\|_{L^2(\B)}^2
    \right),
\]
and hence
\begin{equation}\label{eq:observation-smooth-domain}
    D\bigl(A_\kappa^{(\mu+1)/2}\bigr)
    \times D\bigl(A_\kappa^{\mu/2}\bigr)
    \subset\mathcal D_{\rm obs}.
\end{equation}
For $1/2<\mu\leq1$, this also implies
\[
    D(A_\kappa)\times D(A_\kappa^{1/2})
    \subset\mathcal D_{\rm obs}.
\]
These are sufficient regularity conditions; no optimality
of the displayed power domains is asserted.

Let $\mathcal I$ denote the set of all admissible indices
$(\ell,m,k)$, and define
\[
    \mathcal H_{\kappa,\mathrm{fin}}
    :=
    \operatorname{span}
    \left\{
        (\Phi_{\ell,m,k},0),\ (0,\Phi_{\ell,m,k}):
        (\ell,m,k)\in\mathcal I
    \right\}.
\]
Here $\operatorname{span}$ means finite linear combinations.
Define the finite-sum boundary observation operator by
\[
    \mathcal O_T^0:\mathcal H_{\kappa,\mathrm{fin}}
    \longrightarrow L^2(\Gamma_T),
    \qquad
    \mathcal O_T^0(u_0,u_1)
    :=\N_\kappa u\big|_{\Gamma_T},
\]
where $u$ is the solution with initial data $(u_0,u_1)$
and $\Gamma_T:=(-T,T)\times\Sph^{n-1}$.
Explicitly,
\[
    \bigl[\mathcal O_T^0(u_0,u_1)\bigr](t,\vartheta)
    =
    \sum_{\ell,m,k}
    \gamma_{\nu_\ell,k}
    \left(
        a^+_{\ell,m,k}e^{i\omega_{\nu_\ell,k}t}
        +
        a^-_{\ell,m,k}e^{-i\omega_{\nu_\ell,k}t}
    \right)Y_{\ell,m}(\vartheta),
\]
where $a^\pm_{\ell,m,k}$ are defined in
\eqref{eq:apm-from-initial-data}, and the sum has only
finitely many nonzero terms.
\begin{proposition}[Closed and maximal $L^2$ observation domain]
\label{prop:observation-domain}
Let $T>1$. There exist constants $c_{T,\mu},B_{T,\mu}>0$
such that, for every $(u_0,u_1)\in\mathcal D_{\rm obs}$,
the distributional Neumann trace constructed in
\cref{prop:extended-weak-trace} belongs to $L^2(\Gamma_T)$
and satisfies
\begin{equation}\label{eq:observation-graph-equivalence}
    c_{T,\mu}Q(u_0,u_1)
    \leq
    \|\N_\kappa u\|_{L^2(\Gamma_T)}^2
    \leq
    B_{T,\mu}Q(u_0,u_1).
\end{equation}

The operator $\mathcal O_T^0$ is closable from
$\mathcal H_\kappa$ into $L^2(\Gamma_T)$.
Its closure $\mathcal O_T$ is densely defined, with
\[
    D(\mathcal O_T)=\mathcal D_{\rm obs},
    \qquad
    \mathcal O_T(u_0,u_1)
    =
    \N_\kappa u\big|_{\Gamma_T}.
\]
Finite spectral sums form a core for its graph norm.

Moreover, for every energy solution,
\[
    \N_\kappa u\big|_{\Gamma_T}\in L^2(\Gamma_T)
    \quad\Longleftrightarrow\quad
    (u_0,u_1)\in\mathcal D_{\rm obs}.
\]
In particular, \eqref{eq:obs} holds for every energy
solution whose weak Neumann trace belongs to
$L^2(\Gamma_T)$.
\end{proposition}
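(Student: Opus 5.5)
The plan is to first prove the two-sided estimate \eqref{eq:observation-graph-equivalence} on finite spectral sums, then extend it to $\mathcal D_{\rm obs}$ by approximation, and finally prove maximality by time mollification.

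\emph{Finite sums.} For a finite spectral-sum solution, Parseval on the sphere gives \eqref{eq:boundary-parseval}. In each channel $(\ell,m)$ the signed frequencies $\Lambda_{\nu_\ell}$ satisfy \eqref{eq:quant-gap}, \eqref{eq:quant-count} and the gap across zero.
\begin{itemize}
\item \cref{prop:uniform-ingham}, applied with $b_k^\pm=\gamma_{\nu_\ell,k}a^\pm_{\ell,m,k}$, gives the lower bound $c_{T,\mu}\sum_k|\gamma_{\nu_\ell,k}|^2(|a^+_{\ell,m,k}|^2+|a^-_{\ell,m,k}|^2)$. Its constant is uniform in $\nu$.
\item The upper half of Ingham's inequality depends only on $|I|$ and the common separation. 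It therefore gives the channelwise upper bound with a constant $B_{T,\mu}$ that is also uniform in $\nu$.
\end{itemize}
Summing over $(\ell,m)$ yields \eqref{eq:observation-graph-equivalence} for finite sums.

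\emph{Extension to $\mathcal D_{\rm obs}$.} Take $(u_0,u_1)\in\mathcal D_{\rm obs}$ and let $u_j$ be its spectral truncations. Then $Q[u_j-u_i]\to0$, so the traces $\N_\kappa u_j|_{\Gamma_T}$ form a Cauchy sequence in $L^2(\Gamma_T)$. They also converge in $\mathcal D'$ to $\N_\kappa u$, by the continuity in \cref{prop:extended-weak-trace}, since $u_j\to u$ in $\mathcal H_\kappa$. Hence the $L^2$ limit is $\N_\kappa u$, and \eqref{eq:observation-graph-equivalence} passes to the limit.

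\emph{Closability and domain.} Suppose finite sums satisfy $x_j\to0$ in $\mathcal H_\kappa$ and $\mathcal O_T^0x_j\to f$ in $L^2(\Gamma_T)$. The distributional traces tend to $0$, again by continuity into $\mathcal D'$, so $f=0$ and $\mathcal O_T^0$ is closable.
\begin{itemize}
\item By \eqref{eq:observation-graph-equivalence}, the graph norm on finite sums is equivalent to $(\|\cdot\|_{\mathcal H_\kappa}^2+Q)^{1/2}$. This is a weighted $\ell^2$ norm on the coefficients, and its completion is exactly $\mathcal D_{\rm obs}$; I expect only that the weight is bounded below to be needed.
\item This gives $D(\mathcal O_T)=\mathcal D_{\rm obs}$, with $\mathcal O_T$ equal to the distributional trace by the extension step, and it shows that finite sums form a core.
\item Density of $D(\mathcal O_T)$ holds because finite sums are dense in $\mathcal H_\kappa$.
\end{itemize}

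\emph{Maximality (the main obstacle).} Let $u$ be an energy solution with $\N_\kappa u\in L^2(\Gamma_T)$. Fix $1<T'<T$ and $\rho\in C_c^\infty(-(T-T'),T-T')$ with $\rho\ge0$ and $\int\rho=1$.
\begin{itemize}
\item The solution $\rho*_tu$ has modal coefficients $\widehat\rho(\pm\omega)a^\pm$. Since $\widehat\rho$ decays rapidly and $|\gamma_{\nu,k}|^2\lesssim\omega_{\nu,k}^{2\mu+2}$ by \cref{lem:norming-upper}, we get $Q[\rho*_tu]<\infty$, i.e.\ $\rho*_tu\in\mathcal U_{\rm obs}$.
\item By the commutation in \cref{prop:extended-weak-trace}, $\N_\kappa(\rho*_tu)=\rho*_t\N_\kappa u$ on $\Gamma_{T'}$. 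Young's inequality gives $\|\rho*_t\N_\kappa u\|_{L^2(\Gamma_{T'})}\le\|\N_\kappa u\|_{L^2(\Gamma_T)}$.
\item The lower bound in \eqref{eq:observation-graph-equivalence} at time $T'>1$ then gives $Q[\rho*_tu]\le c_{T',\mu}^{-1}\|\N_\kappa u\|^2_{L^2(\Gamma_T)}$.
\end{itemize}
Now replace $\rho$ by an approximate identity $\rho_\varepsilon$. Then $\widehat\rho_\varepsilon(\pm\omega)\to1$ pointwise, and Fatou's lemma on the coefficient sum yields $Q[u]<\infty$. This proves the equivalence. Finally, \eqref{eq:obs} follows from $\E_\kappa[u](0)\le C_\mu Q[u]\le C\|\N_\kappa u\|^2_{L^2(\Gamma_T)}$, where the first inequality uses \cref{lem:norming-lower1} and \eqref{eq:modal-energy}. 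The delicate points are the bookkeeping of the restriction from $\Gamma_T$ to $\Gamma_{T'}$ and checking that the commutation identity holds on the restricted cylinder.
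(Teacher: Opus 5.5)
Your proposal is correct and follows essentially the same route as the paper: channelwise uniform Ingham upper and lower bounds on finite sums, extension to $\mathcal D_{\rm obs}$ by truncation with the $L^2$ limit identified through distributional continuity, and maximality via time mollification, the commutation identity, Young's inequality at $T'>1$, and Fatou's lemma. The only difference is cosmetic. You prove closability directly from continuity of the trace into $\mathcal D'$ and identify the closure's domain as the weighted-$\ell^2$ completion. The paper instead shows that the trace operator on $\mathcal D_{\rm obs}$ is closed and that spectral truncations form a core; the two arguments are equivalent.
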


\begin{proof}
We first establish the two-sided estimate
\eqref{eq:observation-graph-equivalence} and identify
the resulting $L^2$ observation with the weak trace.

The common separation bound for the signed frequencies
and the upper estimate in Ingham's theorem give a uniform
upper synthesis estimate on every bounded time interval.
If necessary, apply Ingham's theorem on a larger interval
satisfying its length condition and then restrict the
integral to $(-T,T)$.
For $T>1$, \cref{prop:uniform-ingham} also gives the
uniform lower estimate.

Applying these estimates in each spherical harmonic
channel and using angular orthogonality, we obtain
\[
    c_{T,\mu}Q(z)
    \leq
    \|\mathcal O_T^0z\|_{L^2(\Gamma_T)}^2
    \leq
    B_{T,\mu}Q(z)
\]
for every finite spectral initial state $z$.
Now let $z=(u_0,u_1)\in\mathcal D_{\rm obs}$.
Since $Q(z)<\infty$, the upper estimate applied to
differences of finite truncations shows that the series
\[
    \sum_{\ell,m,k}\gamma_{\nu_\ell,k}
    \left(
        a^+_{\ell,m,k}e^{i\omega_{\nu_\ell,k}t}
        +
        a^-_{\ell,m,k}e^{-i\omega_{\nu_\ell,k}t}
    \right)Y_{\ell,m}
\]
converges in $L^2(\Gamma_T)$.
Passing to the limit in the finite-sum estimates gives
\eqref{eq:observation-graph-equivalence}.
By \cref{prop:extended-weak-trace}, this $L^2$ limit,
viewed as a distribution, coincides with
$\N_\kappa u|_{\Gamma_T}$
(since $L^2$ convergence implies distributional convergence
and the distributional limit is unique).
We therefore define
\[
    \mathcal O_Tz:=\N_\kappa u|_{\Gamma_T},
    \qquad z\in\mathcal D_{\rm obs}.
\]
This operator extends $\mathcal O_T^0$.

We next prove that $\mathcal O_T$ is closed.
Suppose that $z_j\in\mathcal D_{\rm obs}$,
\[
    z_j\longrightarrow z
    \quad\text{in }\mathcal H_\kappa,
    \qquad
    \mathcal O_Tz_j\longrightarrow g
    \quad\text{in }L^2(\Gamma_T).
\]
Let $a_j$ denote the corresponding pairs of modal
coefficient sequences, and let $a$ denote the pair
associated with $z$.
The lower estimate in \eqref{eq:observation-graph-equivalence} applied to differences gives
\[
    Q(z_j-z_i)
    \leq c_{T,\mu}^{-1}
    \|\mathcal O_Tz_j-\mathcal O_Tz_i\|_{L^2(\Gamma_T)}^2.
\]
Thus the weighted coefficient vectors $\gamma a_j$
are Cauchy in $\ell^2$.
Energy convergence implies coordinatewise convergence
$a_j\to a$, so their weighted $\ell^2$ limit is
necessarily $\gamma a$.
Consequently,
\[
    Q(z)<\infty,
    \qquad
    Q(z_j-z)\longrightarrow0.
\]
The upper estimate now yields
$\mathcal O_Tz_j\to\mathcal O_Tz$ in $L^2(\Gamma_T)$.
Hence $\mathcal O_Tz=g$, proving closedness.

We now identify $\mathcal O_T$ as the closure of
$\mathcal O_T^0$ and verify the graph-core assertion.
For every $z\in\mathcal D_{\rm obs}$, its finite spectral
truncations $z^{(N)}$ satisfy
\[
    z^{(N)}\longrightarrow z
    \quad\text{in }\mathcal H_\kappa,
    \qquad
    Q(z^{(N)}-z)\longrightarrow0.
\]
By the upper estimate,
\[
    \mathcal O_T^0z^{(N)}
    \longrightarrow\mathcal O_Tz
    \quad\text{in }L^2(\Gamma_T).
\]
Thus finite spectral sums are dense in
$D(\mathcal O_T)$ for the graph norm
\[
    \|z\|_{\mathrm{graph}}
    :=
    \left(
        \|z\|_{\mathcal H_\kappa}^2
        +
        \|\mathcal O_Tz\|_{L^2(\Gamma_T)}^2
    \right)^{1/2}.
\]
Since $\mathcal O_T$ is a closed extension of
$\mathcal O_T^0$, this proves that $\mathcal O_T^0$
is closable and that its closure is exactly
$\mathcal O_T$.
Its domain is dense in $\mathcal H_\kappa$ because it
contains all finite spectral initial states.

It remains to prove maximality: every energy solution
whose weak trace belongs to $L^2(\Gamma_T)$ must have
initial data in $\mathcal D_{\rm obs}$.
The reverse implication was established above.

Let $u$ be an energy solution with
\[
    g:=\N_\kappa u|_{\Gamma_T}\in L^2(\Gamma_T).
\]
Choose $1<T_1<T$, and let
\[
    \rho_\varepsilon(t)
    =
    \varepsilon^{-1}\rho(t/\varepsilon),
\]
where $\rho\in C_c^\infty(-1,1)$ is even, nonnegative,
and has integral one.
For $0<\varepsilon<T-T_1$, set
\[
    u_\varepsilon=\rho_\varepsilon*_t u.
\]
Its modal coefficients are
\[
    a^\pm_{\ell,m,k}
    \widehat\rho(\pm\varepsilon\omega_{\nu_\ell,k}),
    \qquad
    \widehat\rho(\xi)
    :=
    \int_{\mathbb R}\rho(t)e^{-i\xi t}\,\dd t.
\]
The polynomial bound \eqref{eq:norming-upper} gives
\begin{align*}
    Q\bigl(u_\varepsilon(0),
           \partial_tu_\varepsilon(0)\bigr)
    &\leq
    C_\mu
    \sup_{s\geq\omega_*}
    \left(s^{2\mu}|\widehat\rho(\varepsilon s)|^2\right)
    \E_\kappa[u](0)
    <\infty,
\end{align*}
because $\widehat\rho$ decays faster than every power.
Thus the initial data of $u_\varepsilon$ belong to
$\mathcal D_{\rm obs}$.

By \cref{prop:extended-weak-trace}, the weak trace
commutes with time convolution. Therefore
\[
    \N_\kappa u_\varepsilon
    =
    \rho_\varepsilon*g
    \quad\text{on }\Gamma_{T_1}.
\]
Since $\varepsilon<T-T_1$, only values of $g$ in
$\Gamma_T$ enter this convolution.
Extending $g$ by zero outside $(-T,T)$ and applying
Young's inequality, we obtain
\[
    \|\N_\kappa u_\varepsilon\|_{L^2(\Gamma_{T_1})}
    \leq \|g\|_{L^2(\Gamma_T)}.
\]
The lower estimate in
\eqref{eq:observation-graph-equivalence},
applied to $u_\varepsilon$ on $(-T_1,T_1)$, gives
\[
    Q\bigl(u_\varepsilon(0),
           \partial_tu_\varepsilon(0)\bigr)
    \leq
    c_{T_1,\mu}^{-1}
    \|\N_\kappa u_\varepsilon\|_{L^2(\Gamma_{T_1})}^2
    \leq
    c_{T_1,\mu}^{-1}\|g\|_{L^2(\Gamma_T)}^2.
\]
Choose a sequence $\varepsilon_j\downarrow0$ with
$\varepsilon_j<T-T_1$.
Since $\widehat\rho$ is continuous and
$\widehat\rho(0)=1$, for each fixed modal index we have
\[
    |\widehat\rho(
        \pm\varepsilon_j\omega_{\nu_\ell,k})|^2
    \longrightarrow1.
\]
Applying Fatou's lemma with respect to counting measure
on the modal index set, we obtain
\begin{align*}
    Q(u_0,u_1)
    &=
    \sum_{\ell,m,k,\pm}
    \liminf_{j\to\infty}
    \left[
        |\gamma_{\nu_\ell,k}a^\pm_{\ell,m,k}|^2
        |\widehat\rho(
            \pm\varepsilon_j\omega_{\nu_\ell,k})|^2
    \right]\\
    &\leq
    \liminf_{j\to\infty}
    \sum_{\ell,m,k,\pm}
    |\gamma_{\nu_\ell,k}a^\pm_{\ell,m,k}|^2
    |\widehat\rho(
        \pm\varepsilon_j\omega_{\nu_\ell,k})|^2\\
    &\leq
    c_{T_1,\mu}^{-1}\|g\|_{L^2(\Gamma_T)}^2
    <\infty.
\end{align*}
Thus $(u_0,u_1)\in\mathcal D_{\rm obs}$.
The trace identification proved above then gives
$\mathcal O_T(u_0,u_1)=g$.
This proves the asserted characterization.

\end{proof}
By \cref{lem:norming-lower1}, the identity
$\gamma_{\nu,k}=-2\mu c_{\nu,k}$, and
\eqref{eq:modal-energy},
\[
    \|z\|_{\mathcal H_\kappa}^2\leq C_\mu Q(z).
\]
Combining this with
\eqref{eq:observation-graph-equivalence}, we obtain
\[
    \|z\|_{\mathcal H_\kappa}^2
    +\|\mathcal O_Tz\|_{L^2(\Gamma_T)}^2
    \asymp Q(z),
    \qquad z\in\mathcal D_{\rm obs},\quad T>1.
\]
Thus $Q^{1/2}$ is equivalent to the graph norm of
$\mathcal O_T$.

\subsection{The lower time obstruction}\label{prop:short-time-failure}
With the polynomial trace bounds and the characterization
of the observation domain established, we can now prove
the failure of observability for $T<1$.
\begin{proposition}[Failure of observability for $T<1$]
\label{prop:lower-time-obstruction}
Fix $\kappa<0$ and $0<T<1$.
There is a nonzero energy solution
$u\in\mathcal U_{\rm obs}$ such that
\[
    \N_\kappa u=0
    \quad\text{on }(-T,T)\times\Sph^{n-1}.
\]
Consequently, no constant $C>0$ can make
\[
    \E_\kappa[u](0)
    \leq
    C\int_{-T}^T\int_{\Sph^{n-1}}
       |\N_\kappa u|^2\,d\vartheta\,dt
\]
hold for all $u\in\mathcal U_{\rm obs}$.
In fact, no such estimate holds uniformly over finite
spectral-sum solutions.
\end{proposition}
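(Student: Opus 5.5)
We follow the finite-propagation strategy announced in the introduction. Fix $0<T<1$ and choose $\varepsilon>0$ with $T+2\varepsilon<1$. Pick nonzero real data $u_0\in C_c^\infty(B(0,\varepsilon))$ and $u_1=0$. Since the potential $a y^{-2}$ is smooth and bounded on $B(0,1-\varepsilon')$ for every $\varepsilon'>0$, these data lie in $D(A_\kappa^s)$ for every $s$: repeated application of $A_\kappa=-\Delta+a y^{-2}$ preserves $C_c^\infty(\B)$, which lies in $D(A_\kappa)$. Hence \eqref{eq:observation-smooth-domain} shows $(u_0,u_1)\in\mathcal D_{\rm obs}$, so $u\in\mathcal U_{\rm obs}$, and $\E_\kappa[u](0)>0$.

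\textbf{Finite propagation.} We prove $u(t)=0$ on $\{r>\varepsilon+|t|\}$ for $|t|<1-\varepsilon$. We use the local energy identity for the abstract solution, applied to $w=u$ on backward cones $\{|x-x_0|<\delta-|t-t_0|\}$ whose closures lie in $\B$. Inside $\B$ the operator is a wave operator with a smooth bounded potential, so the classical energy estimate applies. The identity is justified for $u\in C(\mathbb R;D(A_\kappa^s))$ with $s$ large, because such $u$ is smooth in the interior by elliptic regularity. The potential term is handled by Gronwall, since $a y^{-2}\le C_{x_0}$ on the cone. Covering the set $\{\varepsilon+|t|<r<1\}$ by such cones gives the vanishing.

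The cones must stay inside $\B$, so near $r=1$ we need the domain-of-dependence argument to respect the boundary. This is the main obstacle. We first try the following: for any $x_0$ with $r_0>\varepsilon+|t_0|$, the backward cone from $(t_0,x_0)$ of radius $|t_0|$ reaches $t=0$ inside $\{r>\varepsilon\}$, where the data vanish. However, this cone may exit $\B$. We handle it by a weighted local energy on truncated cones $C\cap\B$. The boundary flux is
\[
\int \partial_t u\,\partial_\nu u\,\dd\sigma .
\]
In terms of the weighted traces, this flux vanishes in the limit $y\to0$: on finite spectral sums, $y^{\kappa}\partial_tu\to0$ (since $\mathcal D_\kappa u=0$) and $y^{-\kappa}\partial_r u$ stays bounded. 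We therefore derive the inequality for finite spectral sums on spheres $r=1-\eta$ and pass to the limit. Alternatively, for $\kappa\le-1/2$ (limit point), the decay at the boundary alone kills the flux.

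\textbf{Vanishing trace.} Once $u=0$ in the open collar $\{1-\varepsilon<r<1\}\times(-T-\varepsilon,T+\varepsilon)$, Step~2 of \cref{prop:extended-weak-trace} gives $\N_{\kappa,r}u=0$ there as distributions. Passing to the limit $r\uparrow1$ gives $\N_\kappa u=0$ on $\Gamma_T$, so \eqref{eq:obs} fails for $u$.

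\textbf{Finite spectral sums.} Let $z^{(N)}$ be the truncations of $(u_0,u_1)$. By \eqref{eq:observation-graph-equivalence} applied on a longer interval $T'>1$ and then restricted to $\Gamma_T$, we have
\[
\|\N_\kappa u^{(N)}-\N_\kappa u\|_{L^2(\Gamma_T)}^2\le B_{T',\mu}\,Q(z^{(N)}-z)\to0 .
\]
Thus the observations of $u^{(N)}$ on $\Gamma_T$ tend to $0$, while $\E_\kappa[u^{(N)}](0)\to\E_\kappa[u](0)>0$. So no constant works uniformly over finite spectral sums.
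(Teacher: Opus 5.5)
There is a genuine gap in your choice of data, and it breaks the membership $u\in\mathcal U_{\rm obs}$ on which both conclusions rest. Your claim that $A_\kappa=-\Delta+ay^{-2}$ maps $C_c^\infty(\B)$ into itself is false. The function $y=1-|x|$ is only Lipschitz at $x=0$, so $ay^{-2}=a(1+2|x|+3|x|^2+\cdots)$ has a conical singularity at the origin, and $A_\kappa u_0$ is not smooth there when $u_0(0)\neq0$.

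As a result, $u_0$ need not lie in $D(A_\kappa^{(\mu+1)/2})$ when $|\kappa|$ is large, and $Q$ can in fact be infinite. Take $n=3$ and radial $u_0$ with $u_0(0)\neq0$, and write $H=H_{1/2,\mu}$. The radial function $v$ is a multiple of $ru_0(r)$, hence odd, so $v(0)=(Hv)(0)=0$. However, $(H^2v)(0)$ is a nonzero multiple of $a\,u_0(0)$. Three integrations by parts against $\phi_{1/2,k}$ give $U_{0,1,k}=\omega_k^{-6}\bigl[\langle H^3v,\phi_{1/2,k}\rangle+(H^2v)(0)\,\phi_{1/2,k}'(0)\bigr]$. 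Since $|\phi_{1/2,k}'(0)|\asymp\omega_k$, this yields $|U_{0,1,k}|\asymp\omega_k^{-5}$. On the other hand, \eqref{eq:transversal-principal-lower} gives $|\gamma_{1/2,k}|^2\gtrsim\omega_k^{2\mu+1}$. Hence $Q(u_0,0)\gtrsim\sum_k\omega_k^{2\mu-9}=\infty$ once $\mu\geq4$.

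The vanishing of $\N_\kappa u$ on $\Gamma_T$ does not rescue this, because the characterization in \cref{prop:observation-domain} requires $T>1$. Your final step needs $Q(z^{(N)}-z)\to0$, so it fails for the same reason.

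The repair is the one the paper makes: take the data supported in an annulus $\{\varepsilon/2<|x|<\varepsilon\}$, where $ay^{-2}$ is smooth. Then $A_\kappa$ preserves $C_c^\infty$ of that annulus, and the data lie in $\bigcap_m D(A_\kappa^m)$. With this change your argument goes through, although your finite-propagation step differs from the paper's.

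The paper extends $a(1-|x|)^{-2}$ from $B(0,R)$, with $\varepsilon+T<R<1$, to a bounded potential on $\mathbb R^n$. It then applies finite propagation for bounded potentials and identifies the whole-space solution with $u$ by uniqueness, so the boundary never enters. Your truncated-cone argument also works: the flux $\partial_tu\,\partial_ru=O(\eta^{1-2\kappa})$ through $r=1-\eta$ vanishes for finite spectral sums, and you pass to the limit in the energy norm. It does not work via Gronwall, however, since $ay^{-2}$ is unbounded on cones meeting $r=1$. Because $a>0$, include $ay^{-2}|u|^2$ in the local energy density; the local energy inequality is then exact.

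Finally, $T+2\varepsilon<1$ does not give vanishing on $\{r>1-\varepsilon\}$ for $|t|<T+\varepsilon$; you need $T+3\varepsilon\leq1$.
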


\begin{proof}
Choose $0<\rho<1-T$ and a nonzero function
\[
    f\in
    C_c^\infty\bigl(\{x:\rho/2<|x|<\rho\}\bigr).
\]
Let $u$ be the Friedrichs energy solution with
\[
    u(0)=0,\qquad \partial_tu(0)=f.
\]
The potential is smooth on a neighborhood of
$\operatorname{supp}f$.
Repeated application of its differential expression
therefore gives compactly supported smooth functions
with support in the same interior region.
Hence
\[
    f\in\bigcap_{m\geq1}D(A_\kappa^m).
\]
By \eqref{eq:observation-smooth-domain},
$(0,f)\in\mathcal D_{\rm obs}$, so
$u\in\mathcal U_{\rm obs}$.
By the finite propagation property for wave equations
with bounded real potentials
\cite[Lemma~2.2(a)]{Remling2007}, applied also to the
initial-velocity propagator through
$\sin(t\sqrt H)/\sqrt H=\int_0^t\cos(s\sqrt H)\,ds$,
we have
\[
    \operatorname{supp}u(t,\cdot)
    \subset
    \{x:|x|\leq\rho+|t|\},
    \qquad |t|\leq T.
\]
For completeness, choose
\[
    \rho+T<R<1
\]
and extend $a/(1-|x|)^2$ from $B(0,R)$ to a bounded
potential on $\mathbb R^n$.
The whole-space solution with the same initial data
has the displayed support property.
For $|t|\leq T$, it is supported strictly inside
$B(0,R)$ and therefore also solves the original
Friedrichs problem on $\B$.
Uniqueness identifies it with $u$.

Since $\rho+T<1$, the solution vanishes in a fixed
neighborhood of the boundary throughout $[-T,T]$.
Thus its weighted Neumann trace is zero on $\Gamma_T$.
On the other hand,
\[
    \E_\kappa[u](0)
    =
    \frac12\|f\|_{L^2(\B)}^2>0.
\]
The observability inequality therefore fails.

To obtain failure already for finite spectral sums,
let $u_j$ be finite spectral truncations of $u$. Then
$\E_\kappa[u_j](0)\to\E_\kappa[u](0)>0$ and
$Q(u_j-u)\to0$. The upper synthesis estimate on the
bounded interval $(-T,T)$ (which holds also for $T<1$)
and $\N_\kappa u=0$ there give
\[
    \|\N_\kappa u_j\|_{L^2(\Gamma_T)}^2
    \leq B_{T,\mu}Q(u_j-u)\longrightarrow0.
\]
This rules out an inequality with one constant valid
for all finite spectral-sum solutions.
\end{proof}

Together with the estimate for $T>1$, this shows that
the half-timespan threshold is $1$, corresponding to
a total observation length of $2$.
The critical case $T=1$ is not decided by these arguments.

\subsection{An independent lower bound for the observation coefficients}\label{Section6}

In this appendix, we use
\cref{lem:endpoint-transfer1,lem:rellich11}
and the compactness argument in the proof of
\cref{lem:uniform-radial-gap1}
to establish the lower norming estimate.
Only the Friedrichs principal solution is needed,
including when $\mu\geq1$ and the nonprincipal
solution is not locally square integrable. Fix $\mu>1/2$, put
\[
   p=\mu+\frac12=1-\kappa>1,
   \qquad a=\mu^2-\frac14,
   \qquad \mathcal V=\{0\}\cup[1/2,\infty).
\]

\begin{lemma}[Uniform lower norming estimate]
\label{lem:norming-lower1}
For every fixed $\mu>1/2$, there exists $c_\mu>0$ such that
\begin{equation}\label{eq:independent-norming-lower}
    |c_{\nu,k}|^2\geq c_\mu\omega_{\nu,k}^2,
    \qquad \nu\in\mathcal V,\quad k\geq1.
\end{equation}
\end{lemma}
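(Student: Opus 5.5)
The plan is to prove \eqref{eq:independent-norming-lower} by the same regime splitting as in the proof of \cref{lem:uniform-radial-gap1}: bounded frequencies, the transversal regime $\nu/\omega\leq\tau_*<1$, and the glancing regime $\nu/\omega\to1$. I will argue by contradiction. Suppose there are modes $(\nu_j,k_j)$ with $|c_{\nu_j,k_j}|^2/\omega_j^2\to0$. In each regime I will obtain a lower bound of the form $|c|^2\geq c\,\omega^{2q}$ with some exponent $q\geq1$, which contradicts this. Throughout, $\phi_{\nu,k}$ is normalized in $L^2(0,1)$, and $c_{\nu,k}$ is its principal coefficient at $r=1$.

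\emph{Bounded frequencies.} If $\omega_j\leq M$, then \eqref{eq:gap-bounded-frequency-minmax1} forces $k_j$ and $\nu_j$ to stay bounded, so after passing to a subsequence $k_j=k_*$ and $\nu_j\to\nu_*$. (The sector $\nu=0$ consists of a single operator and is trivial.) By the form-perturbation argument in Step~1 of that proof, and by simplicity, the normalized eigenfunctions converge in $H_0^1$. I then use the uniform principal-solution representation $\phi(1-s/\omega)=c\,\omega^{-p}F(s)$ from the proof of \cref{lem:norming-upper}, with $F$ depending continuously on the bounded potential parameters. This gives $c_{\nu_j,k_*}\to c_{\nu_*,k_*}\neq0$, because the $L^2$ mass of $\phi$ on a fixed boundary collar is $|c|^2$ times a quantity bounded above and below. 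Hence $|c|$ is bounded below there, which is all that is needed since $\omega$ is bounded.

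\emph{Transversal regime.} This follows directly from \eqref{eq:transversal-principal-lower} in \cref{lem:transversal-family11}. With $h=\omega^{-1}$, the normalized eigenfunction of $Q_{h,\tau}$ is $\phi_{\nu,k}$ itself, with eigenvalue $1\in K$, so $A=c_{\nu,k}$ and $|c|^2\geq C^{-1}\omega^{2p}\geq C^{-1}\omega^2$, since $p>1$ and $\omega\geq\omega_*$.

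\emph{Glancing regime.} I use the rescaling of Step~3 in the proof of \cref{lem:uniform-radial-gap1}: $h=\nu^{-1}$, $h^2\omega^2=1+\epsilon$, $x=(1-r)/\epsilon$, $\eta=h\epsilon^{-3/2}$. The function $v=U\phi=\epsilon^{1/2}\phi(1-\epsilon x)$ is a normalized eigenfunction of $P$ with eigenvalue $1$ and principal coefficient $A'=\epsilon^{p+1/2}c$ at $x=0$. The right-side Rellich bound \eqref{Eq:glancing-far-mass} holds for all bounded $\eta$, since its sign condition \eqref{Eq:glancing-rellich-sign1} does not involve $\eta$. Combined with a near-side bound, it should give
\[
    1\leq C\bigl(|v(x_0)|^2+\eta^2|v'(x_0)|^2\bigr)\leq C'\eta^{2p}|A'|^2,
\]
where the second inequality comes from the Cauchy-data estimate. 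The two subcases are as follows.
\begin{itemize}
\item If $\eta\to0$, the near-side bound and the Cauchy-data bound both come from \cref{lem:endpoint-transfer1} (namely \eqref{eq:endpoint-cauchy1}). This yields $|c|^2\geq c\,\eta^{-2p}\epsilon^{-2p-1}=c\,h^{-2p}\epsilon^{p-1}$. Using \eqref{Eq:bottom-lower1}, $\epsilon\geq c_ah^{2/3}$, I obtain $|c|^2\geq c\,\omega^{2p-\frac23(p-1)}=c\,\omega^{(4p+2)/3}\geq c\,\omega^2$, since $p>1$.
\item If $\eta$ stays bounded below, then the transfer lemma cannot be used, because it requires a small kinetic parameter. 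Instead I will redo its first part directly on a fixed interval $[0,x_0]$. The uniform principal asymptotics \eqref{eq:general-principal-asymptotics}, with $\eta$ confined to a compact subset of $(0,\infty)$, together with Gronwall for the first-order system, bound both the mass on $(0,x_0)$ and the Cauchy data at $x_0$ by $C|A'|^2$. This gives $|A'|\geq c$, that is, $|c|^2\geq c\,\epsilon^{-2p-1}$. Since $\eta\geq c$ means $\epsilon\leq Ch^{2/3}$, I get $|c|^2\geq c\,h^{-\frac23(2p+1)}\geq c\,\omega^2$, because $\frac23(2p+1)>2\iff p>1$.
\end{itemize}

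I expect the main obstacle to be the $\eta\asymp1$ subcase of the glancing regime. There one must check that the principal-solution estimates are uniform in the kinetic parameter $\eta\in[c,C]$ and in the potentials $V_j$, which converge to $2x$ in $C^1$ on $[0,x_0]$. One must also check that the coefficients of the Rellich flux $F_j(x_0)$ remain bounded in terms of $(v(x_0),\eta v'(x_0))$. The second point looks routine. The first reduces, after dividing by $\eta^2$, to the stated uniformity of \eqref{eq:general-principal-asymptotics} for potentials with a common $L^\infty$ bound.
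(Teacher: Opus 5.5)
Your proof is correct. It follows the paper's regime splitting, and in the bounded-frequency, transversal, and $\eta_j\to0$ glancing cases it uses the same ingredients. Your exponent bookkeeping via \eqref{Eq:bottom-lower1} replaces the paper's observation that $(\eta_j\epsilon_j)^{2-2p}\to\infty$, and the two are equivalent.

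The genuine difference is the subcase $\eta_j\to\eta_\infty>0$.

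\emph{The paper's route.} It argues by compactness. It recycles Step~5 of the proof of \cref{lem:uniform-radial-gap1} to extract a limit $G_\infty$, a Friedrichs eigenfunction of $-\eta_\infty^2\partial_x^2+\eta_\infty^2a x^{-2}+2x$. It then asserts convergence also in $C^1_{\mathrm{loc}}$. That is an extra ODE step, since Step~5 only proves $L^2$ and weak $H_0^1$ convergence. Finally it uses continuity of the principal solutions to get $A_j\to A_\infty\neq0$.

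\emph{Your route.} You note that the far-side Rellich bound \eqref{Eq:glancing-far-mass} never used $\eta_j\to0$. The sign condition \eqref{Eq:glancing-rellich-sign1} does not involve $\eta$. The coefficients of $F_j(x_0)$, written in terms of $(v(x_0),\eta_jv'(x_0))$, stay bounded whenever $\eta_j$ is bounded. On $[0,x_0]$, dividing by $\eta_j^2$ turns the near side into a principal-solution problem with the uniformly bounded potential $\eta_j^{-2}(V_j-1)$. Only upper bounds on the mass and Cauchy data of the principal solution are then needed, and $1=\|v\|^2\leq C|A'|^2$ follows directly.

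\emph{What each buys.} Your argument is more elementary and quantitative. It avoids the limiting operator altogether and treats both glancing subcases by one mechanism. Your bookkeeping in fact gives the uniform rate $|c_{\nu,k}|^2\gtrsim\omega_{\nu,k}^{(4p+2)/3}$, that is, $|c_{\nu,k}|^2/\omega_{\nu,k}^2\gtrsim\omega_{\nu,k}^{-4\kappa/3}$. This is the same exponent that the boundary-layer scaling of \cref{rem:positive-kappa-failure} produces for first radial modes, whereas the paper records only divergence. The paper's route is shorter on the page mainly because its compactness argument was already written for the gap lemma.
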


\begin{proof}
Suppose otherwise. Choose $\nu_j\in\mathcal V$ and
$k_j\geq1$, and write
\[
    (\omega_j,\phi_j,c_j)
    :=
    (\omega_{\nu_j,k_j},\phi_{\nu_j,k_j},c_{\nu_j,k_j}),
    \qquad
    \frac{|c_j|^2}{\omega_j^2}\longrightarrow0.
\]
Here $\|\phi_j\|_{L^2(0,1)}=1$ and
$\phi_j(1-y)=c_jy^p(1+O(y^2))$.

As in \cref{lem:uniform-radial-gap1}, bounded frequencies
restrict $\nu_j\geq1/2$ to a compact set and $k_j$ to
finitely many values. On this set, $|c_{\nu,k}|$ is
continuous and nonzero: it is recovered by comparison
with the normalized principal solution at a fixed
interior point near $r=1$.
The isolated sector $\nu=0$ likewise contributes only
finitely many modes at bounded frequencies.
Thus $\omega_j\to\infty$.

We use the same division into regimes as in the gap
proof. If $\nu_j/\omega_j\leq\tau_*<1$ along a subsequence,
\eqref{eq:transversal-principal-lower}, with
$\widehat h_j=\omega_j^{-1}$, gives
\[
    \frac{|c_j|^2}{\omega_j^2}
    \geq C^{-1}\omega_j^{2p-2}\longrightarrow\infty,
\]
a contradiction.

It remains to consider $\nu_j/\omega_j\to1$.
Use the glancing rescaling from
\cref{lem:uniform-radial-gap1}:
\[
    h_j=\nu_j^{-1},\qquad
    \epsilon_j=h_j^2\omega_j^2-1,\qquad
    \eta_j=h_j\epsilon_j^{-3/2},\qquad
    G_j(x)=\sqrt{\epsilon_j}\phi_j(1-\epsilon_jx).
\]
Then $\epsilon_j>0$, $\epsilon_j\to0$, and $\eta_j$
is bounded. Moreover, $G_j$ is a normalized eigenfunction
of the rescaled operator $P_j$ at energy $1$.
Its principal coefficient satisfies
\begin{equation}\label{eq:rescaled-principal-coefficient}
    A_j=\epsilon_j^{p+1/2}c_j,
    \qquad
    \frac{|c_j|^2}{\omega_j^2}
    =
    \frac{\eta_j^2|A_j|^2}{1+\epsilon_j}
    \epsilon_j^{2-2p}.
\end{equation}
Passing to a subsequence, assume
$\eta_j\to\eta_\infty\geq0$.

If $\eta_j\to0$, the same argument as in Step~4
of the proof of \cref{lem:uniform-radial-gap1},
combining the two mass estimates with
\eqref{eq:endpoint-cauchy1}, gives
\[
    1\leq C\eta_j^{2p}|A_j|^2.
\]
Consequently,
\[
    \frac{|c_j|^2}{\omega_j^2}
    \geq
    C^{-1}
    \frac{(\eta_j\epsilon_j)^{2-2p}}{1+\epsilon_j}
    \longrightarrow\infty,
\]
since $p>1$.

If $\eta_\infty>0$, Step~5 of the proof of
\cref{lem:uniform-radial-gap1} gives, after zero extension
and passage to a subsequence,
\[
    G_j\longrightarrow G_\infty
    \quad\text{in }L^2(0,\infty)
    \text{ and }C^1_{\mathrm{loc}}(0,\infty),
\]
where $G_\infty$ is a normalized Friedrichs eigenfunction
at energy $1$ of
\[
    -\eta_\infty^2\partial_x^2
    +\frac{\eta_\infty^2a}{x^2}+2x.
\]
Let $f_j,f_\infty$ be the corresponding principal
solutions normalized by $x^{-p}f(x)\to1$.
Since the regular coefficients
$\eta_j^{-2}(V_j-1)$ converge uniformly near zero,
continuous dependence of the principal solutions gives
$f_j(x_*)\to f_\infty(x_*)\neq0$ at a sufficiently
small fixed $x_*>0$. Hence
\[
    A_j=\frac{G_j(x_*)}{f_j(x_*)}
    \longrightarrow
    \frac{G_\infty(x_*)}{f_\infty(x_*)}
    =A_\infty\neq0.
\]
Indeed, a nonzero Friedrichs solution cannot have
zero principal coefficient.
Since $p>1$, \eqref{eq:rescaled-principal-coefficient}
again yields
\[
    \frac{|c_j|^2}{\omega_j^2}
    =
    \frac{\eta_j^2|A_j|^2}{1+\epsilon_j}
    \epsilon_j^{2-2p}
    \longrightarrow\infty.
\]
Both cases contradict the assumed vanishing ratio.
\end{proof}

This proof uses only the preceding endpoint estimates
and compactness arguments, without invoking any
previously established boundary observability inequality.
\section{Energy equivalence and comparison of the solution classes}
\label{app:energy-and-solution-classes}

We retain the notation and the Friedrichs realization
$A_\kappa$ introduced in the main text.
Throughout this appendix, we assume $n\geq3$ and
$-\frac12<\kappa<0$, and compare our energy and
observation class with those of \cite{ESV2021}
for the unperturbed equation $X=V=0$.
\subsection{Equivalence of the energies}\label{app:energy-equivalence}

Recall the energy defined in \cref{eq:energy}:
\begin{equation}\label{eq:app-form-energy}
    \mathcal E_\kappa[u](t)
    =
    \frac12\int_B
    \left(
        |\partial_tu|^2
        +|\nabla u|^2
        +\frac{a}{y^2}|u|^2
    \right)\,dx.
\end{equation}
Let
\[
    D_ru
    =
    y^\kappa\partial_r(y^{-\kappa}u)
    =
    \partial_ru+\frac{\kappa}{y}u,
    \qquad
    \nabla_{\mathrm{tan}}u
    =
    r^{-1}\nabla_{\mathbb S^{n-1}}u.
\]
The twisted energy appearing in \cite[equation~(1.3)]{ESV2021}
is
\begin{equation}\label{eq:app-twisted-energy}
    \mathcal E_{\mathrm{tw}}[u](t)
    =
    \int_B
    \left(
        |\partial_tu|^2
        +|D_ru|^2
        +|\nabla_{\mathrm{tan}}u|^2
    \right)\,dx.
\end{equation}
The energy used in
\cite[equation~(2.13) and Theorem~5.1]{ESV2021} is
\begin{equation}\label{eq:app-E1}
    E_{ESV}[u](t)
    =
    \mathcal E_{\mathrm{tw}}[u](t)
    +\|u(t)\|_{L^2(B)}^2.
\end{equation}

\begin{proposition}[Energy comparison]
\label{prop:app-energy-comparison}
For every $(v,w)\in H_0^1(B)\times L^2(B)$,
\begin{equation}\label{eq:app-energy-identity}
    \mathcal E_{\mathrm{tw}}[v,w]
    =
    2\mathcal E_\kappa[v,w]
    -\kappa(n-1)
    \int_B\frac{|v|^2}{ry}\,dx.
\end{equation}
Consequently, there exists $C_{n,\kappa}>0$ such that
\begin{equation}\label{eq:app-energy-equivalence}
    2\mathcal E_\kappa[v,w]
    \leq
    \mathcal E_{\mathrm{tw}}[v,w]
    \leq
    E_{ESV}[v,w]
    \leq
    C_{n,\kappa}\mathcal E_\kappa[v,w].
\end{equation}
In particular, these energies define equivalent norms on
the same Friedrichs energy space.
\end{proposition}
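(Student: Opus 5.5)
The plan is to prove the pointwise identity \eqref{eq:app-energy-identity} first for $v\in C_c^\infty(B)$ and then pass to $H_0^1(B)$ by density. For smooth compactly supported $v$, we expand
\[
    |D_rv|^2=|\partial_rv|^2+\frac{2\kappa}{y}\operatorname{Re}(\bar v\,\partial_rv)+\frac{\kappa^2}{y^2}|v|^2 ,
\]
and use $|\nabla v|^2=|\partial_rv|^2+|\nabla_{\rm tan}v|^2$. The cross term is the only nontrivial piece. In polar coordinates we write $\frac{2\kappa}{y}\operatorname{Re}(\bar v\partial_rv)=\frac{\kappa}{y}\partial_r|v|^2$ and integrate by parts in $r$ against the measure $r^{n-1}\,dr\,d\vartheta$:
\[
    \int_B\frac{\kappa}{y}\partial_r|v|^2\,dx
    =-\kappa\int_B|v|^2\,r^{1-n}\partial_r\Bigl(\frac{r^{n-1}}{1-r}\Bigr)\,dx
    =-\kappa\int_B|v|^2\Bigl(\frac{1}{y^2}+\frac{n-1}{ry}\Bigr)dx .
\]
No boundary terms arise, since $v$ has compact support in $B\setminus\{0\}$; this set is a core, as noted in the form-core discussion of the paper. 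Collecting terms gives $\kappa^2-\kappa=\kappa(\kappa-1)=-a$ as the coefficient of $|v|^2/y^2$. Hence
\[
    \int_B\bigl(|D_rv|^2+|\nabla_{\rm tan}v|^2\bigr)
    =\int_B\Bigl(|\nabla v|^2-\frac{a}{y^2}|v|^2\Bigr)-\kappa(n-1)\int_B\frac{|v|^2}{ry}.
\]
This does not match the sign in \eqref{eq:app-energy-identity} as written. Before finalizing, I will recheck the sign of $a$ and the bookkeeping: $a=-\kappa(1-\kappa)=\kappa^2-\kappa$, so $\kappa^2-\kappa=+a$, and the coefficient is $+a/y^2$. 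This gives
\[
    \mathcal E_{\rm tw}=2\mathcal E_\kappa-\kappa(n-1)\int_B\frac{|v|^2}{ry}\,dx,
\]
matching the claim.

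For the extension to $H_0^1(B)$, every term is continuous in the $H_0^1$ norm. The term $\int|v|^2/y^2$ is controlled by the boundary Hardy inequality. The term $\int|v|^2/(ry)$ is bounded by $\int|v|^2/r^2+\int|v|^2/y^2$, which is controlled by Hardy at the origin for $n\ge3$ (we assume $n\ge3$ here) together with the boundary Hardy inequality. Finally, $D_rv=\partial_rv+\kappa v/y\in L^2$ by the same boundary Hardy bound. Both sides of \eqref{eq:app-energy-identity} are therefore continuous on $H_0^1$, and the identity passes to the limit; the $w$-terms agree trivially.

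For \eqref{eq:app-energy-equivalence}, the first inequality holds because $-\kappa(n-1)\int|v|^2/(ry)\ge0$ when $\kappa<0$, and the second is immediate since $E_{ESV}=\mathcal E_{\rm tw}+\|v\|_{L^2}^2$. For the third, we bound $\int|v|^2/(ry)\le C_n\|\nabla v\|_{L^2}^2$ via the two Hardy inequalities above, and $\|v\|_{L^2}^2\le C\|\nabla v\|_{L^2}^2$ by Poincaré. Both are then at most $C\,\mathcal E_\kappa$, because $a>0$ gives $2\mathcal E_\kappa\ge\|\nabla v\|^2$.

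The main obstacle is purely bookkeeping: the signs in the cross-term integration by parts, and the justification that the weighted integrals are finite on the form domain. Both are handled by the Hardy inequalities recalled in the introduction.
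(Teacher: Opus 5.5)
Your proposal is correct and follows the paper's proof essentially step for step: expand $|D_rv|^2$, integrate the cross term by parts in polar coordinates to get $-\kappa\int_B\bigl(y^{-2}+(n-1)(ry)^{-1}\bigr)|v|^2\,dx$, use $\kappa^2-\kappa=a$, extend to $H_0^1(B)$ via the boundary and origin Hardy inequalities (with $1/(ry)\le\frac12(r^{-2}+y^{-2})$), and deduce the comparison from $-\kappa(n-1)>0$ together with Hardy and Poincar\'e. In the write-up, delete the intermediate display with $-a/y^2$, which comes from the slip $\kappa(\kappa-1)=-a$ that you later correct, and state the dense class consistently: either $C_c^\infty(B\setminus\{0\})$ as you use, or $C_c^\infty(B)$ with the remark that the $r=0$ boundary term vanishes, as in the paper.
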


\begin{proof}
Initially let $v\in C_c^\infty(B)$. Expanding the twisted
radial derivative gives
\[
    |D_rv|^2
    =
    |\partial_rv|^2
    +\frac{\kappa^2}{y^2}|v|^2
    +\frac{\kappa}{y}\partial_r|v|^2.
\]
Integration by parts in polar coordinates yields
\[
    \int_B\frac{\kappa}{y}\partial_r|v|^2\,dx
    =
    -\kappa
    \int_B
    \left(
        \frac1{y^2}+\frac{n-1}{ry}
    \right)|v|^2\,dx.
\]
There is no contribution at $r=0$, since $n\geq3$.
Using
\[
    |\nabla v|^2
    =
    |\partial_rv|^2+|\nabla_{\mathrm{tan}}v|^2,
    \qquad
    a=\kappa^2-\kappa,
\]
we obtain \eqref{eq:app-energy-identity}.

The boundary and interior Hardy inequalities give
\[
    \int_B\frac{|v|^2}{y^2}\,dx
    \leq4\int_B|\nabla v|^2\,dx,
    \qquad
    \int_B\frac{|v|^2}{r^2}\,dx
    \leq
    \frac4{(n-2)^2}\int_B|\nabla v|^2\,dx.
\]
Since
\[
    \frac1{ry}
    \leq\frac12\left(\frac1{r^2}+\frac1{y^2}\right),
\]
the additional integral in
\eqref{eq:app-energy-identity} is continuous on $H_0^1(B)$.
Thus the identity extends by density to that space.

Finally, $-\kappa(n-1)>0$, which gives the lower bounds
in \eqref{eq:app-energy-equivalence}. The Hardy inequalities
and the Poincare inequality give the upper bound.
\end{proof}

Thus the use of $\mathcal E_\kappa$ in the spectral argument
does not change the energy topology of the problem.
Once the Neumann traces are identified, an estimate
\[
    \mathcal E_\kappa[u](0)
    \leq C_T\|\mathcal N_\kappa u\|_{L^2(\Gamma_T)}^2
\]
immediately implies
\begin{equation}\label{eq:app-ESV-energy-observation}
    E_{ESV}[u](0)
    \leq C_{T,n,\kappa}
    \|\mathcal N_\kappa u\|_{L^2(\Gamma_T)}^2.
\end{equation}
This is the energy used in
\cite[Theorem~5.1]{ESV2021}.

\subsection{Boundary admissibility and the spectral observation domain}

Write $\beta=1-2\kappa$ and
$\mathcal N_{\kappa,r}u
:=y^{2\kappa}\partial_r(y^{-\kappa}u)$ for a sufficiently regular function. In our notation, the boundary-admissibility conditions
of \cite[Definition~2.2]{ESV2021} require
$u\in C^1(I_T\times B)$ and some $g\in L^2(\Gamma_T)$
such that, strongly in $L^2(\Gamma_T)$ as $r\uparrow1$,
\begin{align}
    \mathcal N_{\kappa,r}u
        &\longrightarrow g,
        \label{eq:app-BA-Neumann}\\
    \beta y^{\kappa-1}u
        &\longrightarrow-g,
        \label{eq:app-BA-principal}\\
    y^\kappa\partial_tu
        &\longrightarrow0.
        \label{eq:app-BA-time}
\end{align}
Then $\mathcal N_\kappa u=g$, and
\eqref{eq:app-BA-principal} also gives
\[
    y^{-\kappa}u
    =y^\beta(y^{\kappa-1}u)\longrightarrow0,
\]
so $\mathcal D_\kappa u=0$.

Let $\mathscr A_T$ denote the class of solutions of the
unperturbed equation on $I_T$ with finite twisted energy
in the sense of \cite{ESV2021}, satisfying the
boundary-admissibility conditions above.

By the energy comparison in
\cref{prop:app-energy-comparison}, together with
\cite[Lemmas~5.2--5.3]{ESV2021}, these solutions satisfy
\[
    u\in L^2_{\mathrm{loc}}(I_T;H_0^1(B)),
    \qquad
    \partial_tu\in L^2_{\mathrm{loc}}(I_T;L^2(B)).
\]
Expanding the weak equation in the eigenbasis of
$A_\kappa$ gives the scalar wave equations.
Local integrability of the energy then implies finite
conserved spectral energy and identifies each
$u\in\mathscr A_T$ with a Friedrichs energy solution.
This identification requires no large-time assumption.

Finally, let $\mathscr S_T$ denote the restrictions to
$I_T$ of Friedrichs energy solutions with
\[
    \bigl(u(0),\partial_tu(0)\bigr)\in\mathcal D_{\rm obs}.
\]

\begin{proposition}[Inclusion and identification of the traces]
\label{prop:app-inclusion}
For every $T>1$,
\[
    \mathscr A_T\subset\mathscr S_T.
\]
Moreover, for every $u\in\mathscr A_T$, the weak Neumann
trace constructed in \cref{prop:extended-weak-trace}
coincides with the strong boundary limit $g$ in
\eqref{eq:app-BA-Neumann}:
\[
    \mathcal N_\kappa u=g
    \quad\text{in }L^2(\Gamma_T).
\]
\end{proposition}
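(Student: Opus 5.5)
Fix $u\in\mathscr A_T$ with strong boundary limit $g\in L^2(\Gamma_T)$ as in \eqref{eq:app-BA-Neumann}. By the discussion preceding the statement, $u$ is identified with a Friedrichs energy solution, so its initial data lie in $\mathcal H_\kappa$. Proposition~\ref{prop:extended-weak-trace} then gives the weak trace $\N_\kappa u$ as the distributional limit of the extended interior traces $\N_{\kappa,r}u$. The plan has three steps.

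\textbf{Step 1 (classical and extended traces agree on interior spheres).} Since $u\in C^1(I_T\times B)$, it is $C^1$ on every interior collar. The consistency clause of Proposition~\ref{prop:extended-weak-trace} therefore gives, for each $r\in(1/2,1)$,
\[
    \N_{\kappa,r}u=\mathcal N_{\kappa,r}u
    \quad\text{in }\mathcal D'(\Gamma_T),
\]
where the right-hand side is the classical weighted radial derivative $y^{2\kappa}\partial_r(y^{-\kappa}u)$ restricted to the sphere of radius $r$. One point must be checked here: the extension is defined through finite spectral truncations on all of $\mathbb R\times\Sph^{n-1}$, while $u\in\mathscr A_T$ lives only on $I_T$. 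Because $u$ coincides on $I_T$ with the global Friedrichs solution, the truncations converge in $C(\overline J;H_0^1)$ on compact $J\subset I_T$. The collar argument of Step~2 of that proof then applies verbatim with test functions supported in $\Gamma_T$.

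\textbf{Step 2 (pass to the boundary).} As $r\uparrow1$, the left-hand side converges to $\N_\kappa u$ in $\mathcal D'(\Gamma_T)$ by \eqref{eq:weak-neumann-trace}. The right-hand side converges to $g$ strongly in $L^2(\Gamma_T)$ by \eqref{eq:app-BA-Neumann}, hence also in distributions. Uniqueness of distributional limits gives $\N_\kappa u=g$ in $\mathcal D'(\Gamma_T)$, and since $g\in L^2(\Gamma_T)$ the identity holds in $L^2(\Gamma_T)$.

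\textbf{Step 3 (inclusion).} We now know $\N_\kappa u|_{\Gamma_T}\in L^2(\Gamma_T)$ with $T>1$. The maximality statement of Proposition~\ref{prop:observation-domain} then yields $(u(0),\partial_tu(0))\in\mathcal D_{\rm obs}$, that is, $u\in\mathscr S_T$. This is the only place where $T>1$ enters; it is needed for the lower Ingham estimate used in the mollification and Fatou argument.

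The main obstacle is Step~1, specifically the identification of an element of $\mathscr A_T$ with a Friedrichs energy solution whose spectral truncations control the classical radial derivative on interior spheres. I will rely on the preceding identification paragraph (via \cite[Lemmas~5.2--5.3]{ESV2021} and the energy comparison of Proposition~\ref{prop:app-energy-comparison}) for the energy-class membership. For the sphere-wise agreement I will use that the bulk weighted derivative is an $L^2$ function on the collar cylinder, and that the pairings $r\mapsto\langle\cdot,\psi\rangle$ are continuous for both the extended trace (by \eqref{eq:interior-trace-test-bound}) and the classical trace (by $C^1$ regularity). Agreement for almost every $r$ then upgrades to agreement for every $r$. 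The conditions \eqref{eq:app-BA-principal} and \eqref{eq:app-BA-time} are not needed for this part beyond guaranteeing $\mathcal D_\kappa u=0$, which is consistent with the Friedrichs realization.
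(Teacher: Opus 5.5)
Your argument is correct, but it takes a different route from the paper's proof.

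\textbf{Your route.} You use the sphere-wise consistency clause of \cref{prop:extended-weak-trace} (the collar argument in its Step~2) to identify $\N_{\kappa,r}u$ with the classical weighted radial derivative on each interior sphere. You then conclude by uniqueness of distributional limits as $r\uparrow1$. This is exactly the observation recorded in the remark following \cref{prop:extended-weak-trace}.

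\textbf{The paper's route.} The paper works channel by channel. It projects onto $Y_{\ell,m}$ and passes to $v=r^\alpha U_{\ell,m}$. Green's identity against the profile $\eta=\chi(1-r)^\kappa$ then gives $g_{\ell,m}=\partial_t^2\langle v,\eta\rangle+\langle v,H_{\nu,\mu}\eta\rangle$ in $\mathcal D'(I_T)$. The same identity applied to eigenfunctions yields \eqref{eq:app-gamma-Green} for $\gamma_{\nu,k}$. Hence $g_{\ell,m}$ equals the $(\ell,m)$ coefficient of the modal series \eqref{eq:extended-trace-series}.

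That route needs $\kappa>-\frac12$, so that $\eta,H_{\nu,\mu}\eta\in L^2(0,1)$. It also needs the principal limit \eqref{eq:app-BA-principal}, to remove the term $\alpha r^{\alpha-1}y^\kappa U_{\ell,m}$ created by the $r^\alpha$ conjugation. In exchange, it uses only the series representation of the weak trace, not its agreement with classical derivatives on interior spheres. It also produces a weak (Green) characterization of the boundary coefficients.

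\textbf{Comparison.} Your proof is shorter and uses only \eqref{eq:app-BA-Neumann}, interior $C^1$ regularity, and the identification with a Friedrichs solution. Its full weight rests on the almost-everywhere-to-everywhere upgrade in the collar argument. You correctly check that this argument still applies to test functions supported in $\Gamma_T$ when $u$ is known only on $I_T$. Both proofs finish with the maximality part of \cref{prop:observation-domain}, which is the only place where $T>1$ is used.
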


\begin{proof}
Let $u\in\mathscr A_T$, and let $g\in L^2(\Gamma_T)$
be the strong boundary limit in
\eqref{eq:app-BA-Neumann}.
Fix an angular channel $(\ell,m)$, put $\nu=\nu_\ell$,
and set
\[
    U_{\ell,m}(t,r)
    :=
    \int_{\Sph^{n-1}}
        u(t,r,\vartheta)\overline{Y_{\ell,m}(\vartheta)}
        \,\dd\vartheta,
    \qquad
    v(t,r):=r^\alpha U_{\ell,m}(t,r).
\]
Then $\partial_t^2v+H_{\nu,\mu}v=0$. Let
\[
    g_{\ell,m}(t)
    :=
    \int_{\Sph^{n-1}}
        g(t,\vartheta)\overline{Y_{\ell,m}(\vartheta)}
        \,\dd\vartheta.
\]
Our aim is to compute $g_{\ell,m}$ and show that it
coincides with the corresponding spherical harmonic
coefficient of the weak Neumann trace constructed
in \cref{prop:extended-weak-trace}.

Since $\|Y_{\ell,m}\|_{L^2(\Sph^{n-1})}=1$,
angular projection is a bounded map from
$L^2(\Gamma_T)$ to $L^2(I_T)$.
Moreover, the interior $C^1$ regularity of $u$
allows radial differentiation under the spherical
integral. Hence \eqref{eq:app-BA-Neumann} implies
\[
    \mathcal N_{\kappa,r}U_{\ell,m}
    \longrightarrow g_{\ell,m}
    \quad\text{strongly in }L^2(I_T).
\]

Choose a smooth cutoff $\chi$ vanishing near $r=0$
and equal to one near $r=1$, and set
\[
    \eta(r):=\chi(r)(1-r)^\kappa.
\]
Since $\kappa>-1/2$,
\[
    \eta,\ H_{\nu,\mu}\eta\in L^2(0,1),
\]
where $H_{\nu,\mu}\eta$ denotes the formal differential
expression applied to $\eta$.
Indeed, the most singular terms cancel because
$a=\kappa(\kappa-1)$.

The radial boundary expression satisfies
\[
    y^{2\kappa}\partial_r(y^{-\kappa}v)
    =
    r^\alpha\mathcal N_{\kappa,r}U_{\ell,m}
    +\alpha r^{\alpha-1}y^\kappa U_{\ell,m}.
\]
By \eqref{eq:app-BA-principal},
\[
    y^\kappa U_{\ell,m}
    =
    y\bigl(y^{\kappa-1}U_{\ell,m}\bigr)
    \longrightarrow0
    \quad\text{in }L^2(I_T).
\]
Thus the radial boundary expression converges strongly
in $L^2(I_T)$ to the angular coefficient $g_{\ell,m}$ of $g$.

Green's identity on truncated radial intervals,
tested against a compactly supported smooth function
of time, therefore yields
\begin{equation}\label{eq:app-Green-trace}
    g_{\ell,m}
    =
    \partial_t^2\langle v,\eta\rangle
    +\langle v,H_{\nu,\mu}\eta\rangle
    \quad\text{in }\mathcal D'(I_T).
\end{equation}
The volume integrals converge because
$v\in C(I_T;L^2(0,1))$ and
$\eta,H_{\nu,\mu}\eta\in L^2(0,1)$.

Applying the same identity to a radial eigenfunction gives
\begin{equation}\label{eq:app-gamma-Green}
    \gamma_{\nu,k}
    =
    \langle\phi_{\nu,k},H_{\nu,\mu}\eta\rangle
    -\omega_{\nu,k}^2\langle\phi_{\nu,k},\eta\rangle.
\end{equation}
Substituting the spectral expansion of $v$ into
\eqref{eq:app-Green-trace} consequently gives
\begin{equation}\label{eq:app-distributional-trace}
    g_{\ell,m}
    =
    \sum_{k\geq1}\gamma_{\nu,k}
    \left(
        a_{\ell,m,k}^+e^{i\omega_{\nu,k}t}
        +a_{\ell,m,k}^-e^{-i\omega_{\nu,k}t}
    \right)
    \quad\text{in }\mathcal D'(I_T).
\end{equation}
This passage follows first for finite truncations
and then by convergence in $C_{\mathrm{loc}}(I_T;L^2(0,1))$,
with time derivatives interpreted distributionally.

By \cref{prop:extended-weak-trace}, the series in
\eqref{eq:app-distributional-trace} is precisely the
$(\ell,m)$ angular coefficient of the weak trace
$\mathcal N_\kappa u$.
Completeness of the spherical harmonics therefore gives
\[
    \mathcal N_\kappa u=g
    \quad\text{in }\mathcal D'(\Gamma_T).
\]
Since $g\in L^2(\Gamma_T)$, the maximal-domain
characterization in \cref{prop:observation-domain} yields
\[
    \bigl(u(0),\partial_tu(0)\bigr)\in\mathcal D_{\rm obs}.
\]
Hence $u\in\mathscr S_T$, and the weak Neumann trace
coincides with the strong boundary limit in
$L^2(\Gamma_T)$.
\end{proof}
\begin{remark}[Strictness of the inclusion]
\label{rem:app-strict-inclusion}
In fact, $\mathscr A_T\subsetneq\mathscr S_T$ for every
$T>1$. To see the mechanism, choose distinct angular
degrees $\ell_j\to\infty$ sufficiently rapidly that
\[
    \sum_j\varepsilon_j<\infty,
    \qquad
    \varepsilon_j
    :=\left(\nu_{\ell_j}^2-\frac14\right)^{-1/3}.
\]
Let $\Phi_j$ be normalized real eigenfunctions
corresponding to the first radial mode in these
angular sectors, with frequencies $\omega_j$ and
boundary coefficients $\gamma_j$. Consider
\[
    u(t)
    :=
    \sum_j
    \frac{\sqrt{\varepsilon_j}}{\gamma_j}
    \sin(\omega_jt)\Phi_j.
\]
The lower norming estimate ensures finite energy, and
\[
    Q[u]=\frac12\sum_j\varepsilon_j<\infty,
\]
so $u\in\mathscr S_T$.

The first radial modes concentrate in boundary layers
of thickness $\varepsilon_j$. Their rescaled profiles
converge to the ground state of
$-\partial_x^2+a/x^2+2x$ on $(0,\infty)$.
Evaluating the weighted time derivative at
$r_j=1-\varepsilon_j$, this scaling and angular
orthogonality give
\[
    \liminf_{j\to\infty}
    \bigl\|
        \varepsilon_j^\kappa
        \partial_tu(\,\cdot\,,r_j\,\cdot\,)
    \bigr\|_{L^2(\Gamma_T)}^2>0.
\]
Thus the boundary-admissibility condition
\eqref{eq:app-BA-time} fails.
The construction can moreover be verified to have
interior $C^1$ regularity and to satisfy the strong
Neumann and principal boundary limits
\eqref{eq:app-BA-Neumann}--\eqref{eq:app-BA-principal}.
We omit the detailed estimates for brevity.
\end{remark}

\noindent \textbf{Authors' addresses:}

\medskip
\noindent Longben Wei, School of Mathematical Sciences, Guizhou Normal University, Guiyang 550025, P.R. China. \\
Email address: \texttt{longbenwei51@gmail.com}

\begin{thebibliography}{9}
\bibitem{AnantharamanLeautaudMacia2016}
N. Anantharaman, M. L\'eautaud and F. Maci\`a,
Wigner measures and observability for the
Schr\"odinger equation on the disk,
Invent. Math. \textbf{206} (2016), 485--599.

\bibitem{Bachelot2011}
A. Bachelot,
The Klein--Gordon equation in the anti-de Sitter cosmology,
J. Math. Pures Appl. (9) \textbf{96} (2011), 527--554.

\bibitem{BKL2002}
C. Baiocchi, V. Komornik, and P. Loreti,
\emph{Ingham--Beurling type theorems with weakened gap conditions},
Acta Math. Hungar. \textbf{97} (2002), no.~1--2, 55--95.
\bibitem{BarceloRuizVega2006}
J.~A. Barcel\'o, A. Ruiz, and L. Vega,
Some dispersive estimates for Schr\"odinger equations
with repulsive potentials,
J. Funct. Anal. \textbf{236} (2006), 1--24.

\bibitem{Beurling1989}
A.~Beurling:
Interpolation for an interval on $\mathbb R^1$,
in: \emph{The Collected Works of Arne Beurling},
Vol.~2: \emph{Harmonic Analysis},
Birkh\"auser, Boston, 1989.

\bibitem{BiccariZuazua2016}
U. Biccari and E. Zuazua,
Null controllability for a heat equation with a singular
inverse-square potential involving the distance to the
boundary function,
J. Differential Equations \textbf{261} (2016),
no.~5, 2809--2853.

\bibitem{BurqEtAl2004}
N. Burq, F. Planchon, J.~G. Stalker, and
A.~S. Tahvildar-Zadeh,
Strichartz estimates for the wave and Schr\"odinger
equations with potentials of critical decay,
Indiana Univ. Math. J. \textbf{53} (2004), 1665--1680.

\bibitem{Cazacu2012}
C. Cazacu,
Schr\"odinger operators with boundary singularities:
Hardy inequality, Pohozaev identity and controllability
results,
J. Funct. Anal. \textbf{263} (2012), no.~12, 3741--3782.

\bibitem{Cazacu2014}
C. Cazacu,
Controllability of the heat equation with an inverse-square
potential localized on the boundary,
SIAM J. Control Optim. \textbf{52} (2014), no.~4, 2055--2089.

\bibitem{EGNT2013}
J.~Eckhardt, F.~Gesztesy, R.~Nichols, and G.~Teschl,
Weyl--Titchmarsh theory for Sturm--Liouville operators
with distributional potentials,
Opuscula Math. \textbf{33} (2013), 467--563.

\bibitem{ESV2021}
A. Enciso, A. Shao, and B. Vergara,
\emph{Carleman estimates with sharp weights and boundary observability
for wave operators with critically singular potentials},
J. Eur. Math. Soc. \textbf{23} (2021), 3459--3495.

\bibitem{Ervedoza2008}
S. Ervedoza,
Control and stabilization properties for a singular
heat equation with an inverse-square potential,
Comm. Partial Differential Equations
\textbf{33} (2008), no.~11, 1996--2019.

\bibitem{Evans2010}
L.~C. Evans,
\emph{Partial Differential Equations},
2nd ed., Graduate Studies in Mathematics, vol.~19,
American Mathematical Society, Providence, RI, 2010.

\bibitem{GesztesyPangStanfillBessel}
F.~Gesztesy, M.~M.~H.~Pang, J.~Stanfill,
On domain properties of Bessel-type operators,
\emph{Discrete Contin. Dyn. Syst. Ser. S}
\textbf{17} (2024), 1911--1946.

\bibitem{Gueye2014}
M.~Gueye, Exact boundary controllability of 1-D parabolic and
hyperbolic degenerate equations, SIAM J. Control Optim. (4)\textbf{52}(2014), 2037--2054.

\bibitem{HaakJamingWangWang2026}
B.~H.~Haak, P.~Jaming, M.~Wang, and Y.~Wang,
Curved Ingham inequalities for dispersive trigonometric sums,
arXiv:2603.15193 (2026).
\url{https://arxiv.org/abs/2603.15193}.

\bibitem{Haraux1989}
A.~Haraux,
S\'eries lacunaires et contr\^ole semi-interne des
vibrations d'une plaque rectangulaire,
J. Math. Pures Appl. (9) \textbf{68} (1989), 457--465.

\bibitem{HillairetMarzuola2023}
L. Hillairet and J. L. Marzuola,
Eigenvalue spacing for 1D singular Schr\"odinger operators,
Asymptot. Anal. \textbf{133} (2023), 267--289.

\bibitem{Ingham1936}
A.~E. Ingham:
Some trigonometrical inequalities with applications to the
theory of series,
Math. Z. \textbf{41} (1936), 367--379.

\bibitem{Jaffard1988}
S.~Jaffard,
Contr\^ole interne exact des vibrations d'une plaque
carr\'ee,
C. R. Acad. Sci. Paris S\'er. I Math.
\textbf{307} (1988), 759--762.

\bibitem{Jaffard1990}
S.~Jaffard,
Contr\^ole interne exact des vibrations d'une plaque
rectangulaire,
Port. Math. \textbf{47} (1990), 423--429.

\bibitem{JenaShao2026}
V. K. Jena and A. Shao,
\emph{Observability for wave equations with critically singular potentials},
\href{https://arxiv.org/abs/2608.28132}
{arXiv:2608.28132} (2026).

\bibitem{Kato1995}
T.~Kato:
\emph{Perturbation Theory for Linear Operators}.
Classics in Mathematics, Springer-Verlag, Berlin, 1995.
Reprint of the 1980 edition.

\bibitem{Komornik1992}
V.~Komornik,
On the exact internal controllability of a Petrowsky
system,
J. Math. Pures Appl. (9) \textbf{71} (1992), 331--342.

\bibitem{Komornik2008}
V.~Komornik: Explicit multidimensional Ingham--Beurling type estimates,
arXiv:0808.2255 (2008).

\bibitem{KomornikLoreti2005}
V.~Komornik and P.~Loreti,
\emph{Fourier Series in Control Theory},
Springer Monographs in Mathematics,
Springer, New York, 2005.

\bibitem{KST2010}
A.~Kostenko, A.~Sakhnovich, G.~Teschl:
Inverse eigenvalue problems for perturbed spherical
Schr\"odinger operators,
Inverse Problems \textbf{26} (2010), 105013, 14~pp.

\bibitem{KostenkoTeschl2011}
A.~Kostenko, G.~Teschl:
On the singular Weyl--Titchmarsh function of perturbed
spherical Schr\"odinger operators,
J. Differential Equations \textbf{250} (2011), 3701--3739.

\bibitem{KrugerTeschl2009}
H. Kr\"uger and G. Teschl,
Relative oscillation theory, weighted zeros of the
Wronskian, and the spectral shift function,
Commun. Math. Phys. \textbf{287} (2009), 613--640.

\bibitem{KuwaeShioya2003}
K.~Kuwae and T.~Shioya,
Convergence of spectral structures: a functional analytic
theory and its applications to spectral geometry,
\textit{Comm. Anal. Geom.} \textbf{11} (2003), no.~4,
599--673.

\bibitem{Remling2007}
C. Remling:
Finite propagation speed and kernel estimates for
Schr\"odinger operators.
Proc. Amer. Math. Soc. \textbf{135} (2007), 3329--3340.

\bibitem{Taeufer2023}
M.~T\"aufer,
Controllability of the Schr\"odinger equation on
unbounded domains without geometric control condition,
ESAIM Control Optim. Calc. Var.
\textbf{29} (2023), Paper No.~59.

\bibitem{TeschlQM2009}
G.~Teschl:
\emph{Mathematical Methods in Quantum Mechanics:
With Applications to Schr\"odinger Operators},
Graduate Studies in Mathematics, Vol.~99,
American Mathematical Society, Providence, RI, 2009.

\bibitem{TeschlODE2012}
G.~Teschl:
\emph{Ordinary Differential Equations and Dynamical Systems},
Graduate Studies in Mathematics, Vol.~140,
American Mathematical Society, Providence, RI, 2012.

\bibitem{TucsnakWeiss2009}
M.~Tucsnak and G.~Weiss,
\emph{Observation and Control for Operator Semigroups},
Birkh\"auser Advanced Texts: Basler Lehrb\"ucher,
Birkh\"auser, Basel, 2009.

\bibitem{VancostenobleZuazua2008}
J. Vancostenoble and E. Zuazua,
Null controllability for the heat equation with singular
inverse-square potentials,
J. Funct. Anal. \textbf{254} (2008), no.~7, 1864--1902.

\bibitem{VancostenobleZuazua2009}
J. Vancostenoble and E. Zuazua,
Hardy inequalities, observability, and control for the
wave and Schr\"odinger equations with singular potentials,
SIAM J. Math. Anal. \textbf{41} (2009), no.~4, 1508--1532.

\bibitem{Warnick2013}
C.~M. Warnick,
The massive wave equation in asymptotically AdS spacetimes,
Comm. Math. Phys. \textbf{321} (2013), 85--111.

\bibitem{WeiDuanXu2026}
Wei, L., Duan, Z., Xu, H.:
Uncertainty principle and geometric condition for the
observability of Schr\"odinger equations.
J. Fourier Anal. Appl. \textbf{32}, Article 28 (2026).

\bibitem{Yafaev2011}
D.~R. Yafaev,
The semiclassical limit of eigenfunctions of the
Schr\"odinger equation and the Bohr--Sommerfeld
quantization condition, revisited,
\textit{St. Petersburg Math. J.}
\textbf{22} (2011), no.~6, 1051--1067.

\bibitem{Zuazua2024}
E.~Zuazua:
\emph{Exact Controllability and Stabilization of the
Wave Equation},
UNITEXT, vol.~162, Springer, Cham, 2024.

\bibitem{Zworski2012}
M.~Zworski,
\textit{Semiclassical Analysis},
Graduate Studies in Mathematics, vol.~138,
American Mathematical Society, Providence, RI, 2012.

\end{thebibliography}
\end{document}